\newcommand{\TryPackage}[3]{\IfFileExists{#1.sty}{\usepackage{#1}#2}{#3}}
\newcommand*\wbar[1]{
  \hbox{ \kern0.01em%
    \vbox{%
      \hrule height 0.5pt  
      \kern0.25ex
      \hbox{%
        \kern-0.2em
        \ensuremath{#1}%
        \kern-0.05em
      }%
    }%
  \kern-0.03em}%
}
\newcommand{\lto}{\longrightarrow}
\newcommand{\wh}{\widehat}
\newcommand{\wt}{\widetilde}
\newcommand{\sm}{\smallsetminus}
\newcommand{\al}{\alpha}
\newcommand{\be}{\beta}
\newcommand{\ga}{\gamma}
\newcommand{\ep}{\varepsilon}
\newcommand{\la}{\lambda}
\renewcommand{\phi}{\varphi}
\newcommand{\si}{\sigma}
\newcommand{\Ga}{\Gamma}
\newcommand{\De}{\Delta}
\newcommand{\Si}{\Sigma}
\newcommand{\ZZ}{{\mathbb Z}}
\newcommand{\QQ}{{\mathbb Q}}
\newcommand{\RR}{{\mathbb R}}
\newcommand{\cE}{\mathcal E}
\newcommand{\cV}{\mathcal V}
\newcommand{\Aut}{\operatorname{Aut}}
\renewcommand{\int}{\operatorname{int}}
\newcommand{\lk}{\operatorname{\ell{\it k}}}
\newcommand{\width}{\operatorname{width}}
\newcommand{\VG}{\operatorname{\it VG}}
\newcommand{\WG}{\operatorname{\it WG}}
\newcommand{\VB}{\operatorname{\it VB}}
\newcommand{\EG}{\operatorname{\it EG}}
\newcommand{\QG}{\operatorname{\it QG}}
\newcommand{\RG}{\operatorname{\it \wbar G}}
\newtheorem{theorem}{Theorem}[section]
\newtheorem{lemma}[theorem]{Lemma}
\newtheorem{proposition}[theorem]{Proposition}
\newtheorem{corollary}[theorem]{Corollary}
\theoremstyle{definition}     
\newtheorem{definition}[theorem]{Definition}
\theoremstyle{remark}
\newtheorem{remark}[theorem]{Remark}
\newtheorem{example}[theorem]{Example}
\begin{document}

\baselineskip=17pt

\title[Virtual knot groups and almost classical knots]{Virtual knot groups and \\ almost classical knots}

\author[Boden]{Hans U. Boden}
\address{Mathematics \& Statistics, McMaster University, Hamilton, Ontario}
\email{boden@mcmaster.ca}

\author[Gaudreau]{Robin Gaudreau}
\address{Mathematics \& Statistics, McMaster University, Hamilton, Ontario}
\email{gaudreai@mcmaster.ca}

\author[Harper]{Eric Harper}
\address{Mathematics \& Statistics, McMaster University, Hamilton, Ontario}
\email{eharper@math.mcmaster.ca}

\author[Nicas]{Andrew J. Nicas}
\address{Mathematics \& Statistics, McMaster University, Hamilton, Ontario}
\email{nicas@mcmaster.ca}

\author[White]{Lindsay White}
\address{Mathematics \& Statistics, McMaster University, Hamilton, Ontario}
\email{whitela3@mcmaster.ca}

\thanks{
}

\subjclass[2010]{Primary: 57M25, Secondary: 57M27}
\keywords{Virtual knots, virtual knot groups, Alexander invariants, almost classical knots, Seifert surface, linking numbers, skein formula, parity.}

\date{September 15, 2016}
\begin{abstract}
We define  a group-valued invariant $\RG_K$ of virtual knots $K$ and show that $\VG_K=\RG_K *_\ZZ \; \ZZ^2,$ where $\VG_K$ denotes the virtual knot group introduced by Boden et al. We further show that $\RG_K$ is isomorphic to both the extended group $\EG_K$ of Silver--Williams and the quandle group $\QG_K$ of Manturov and Bardakov--Bellingeri.    

A virtual knot is called \emph{almost classical} if it admits a diagram with an Alexander numbering, and in that case we show that $\RG_K$ splits as $G_K*\ZZ$, where $G_K$ is the knot group. We establish a similar formula 
for mod $p$ almost classical knots and derive obstructions to $K$ being mod $p$ almost classical. 

Viewed as knots in thickened surfaces, almost classical knots correspond to those that are homologically trivial. We show they admit Seifert surfaces and relate their Alexander invariants to the homology of the associated infinite cyclic cover. We prove the first Alexander ideal is principal, recovering a result first proved by Nakamura et al.~using different methods. The resulting Alexander polynomial is shown to satisfy a skein relation, and its degree gives a lower bound for the Seifert genus. We tabulate almost classical knots up to 6 crossings and determine their Alexander polynomials and virtual genus.

\end{abstract} 

\maketitle

\section*{Introduction}
Given a virtual knot $K$, we examine a family of group-valued invariants of $K$ that includes the extended group $\EG_K$ introduced by Silver and Williams in \cite{SW-Crowell},
the quandle group $\QG_K$ that was first defined by Manturov \cite{M02} and later studied by Bardakov and Bellingeri \cite{BB}, and the virtual knot group $\VG_K$ introduced in \cite{VK}. We define yet another group-valued invariant denoted $\RG_K$ and called the reduced virtual knot group. Using this new group, we explore the relationships between $\EG_K, \QG_K$ and $\VG_K$ by proving that the virtual knot group splits as $\VG_K = \RG_K *_\ZZ \; \ZZ^2$, and that   $\EG_K \cong \RG_K \cong \QG_K$ are isomorphic. 

We then study the Alexander invariants of $\RG_K$, which by the previous results carry the same information about the virtual knot $K$ as $\EG_K, \QG_K$ and $\VG_K$. In \cite{VK}, the virtual Alexander polynomial  $H_K(s,t,q)$ is defined in terms of the zeroth order elementary ideal of $\VG_K$; and \cite{VK} shows that $H_K(s,t,q)$ admits a normalization, satisfies a skein formula, and carries useful information about the virtual crossing number $v(K)$ of $K$. The same is therefore true of the polynomial invariant $\wbar H_K(t,v)$ defined here in terms of the zeroth order elementary ideal of $\RG_K$. 

We use the group-valued invariant $\RG_K$ to investigate virtual knots that admit Alexander numberings, also known as \emph{almost classical knots}. We show that if $K$ is almost classical, then $\RG_K$ splits as $G_K * \ZZ$, and we prove a similar result for virtual knots that admit a mod $p$ Alexander numbering and we refer to them as mod $p$ almost classical knots. 
Thus, the group $\RG_K$ provides a useful obstruction for a knot $K$ to be mod $p$ almost classical, for instance the polynomial $\wbar H_K(t, v)$ must vanish upon setting $v  = \zeta$ for any non-trivial $p$-th root of unity $\zeta.$ 

Virtual knots were introduced by Kauffman in \cite{KVKT}, and they can be viewed as knots in thickened surfaces. From that point of view, almost classical knots correspond to knots which are homologically trivial. For an almost classical knot or link $K$, and we give a construction of an oriented surface $F$ with $\partial F = K$ which we call a Seifert surface for $K$. We use this surface to construct the infinite cyclic cover $X_\infty$ associated to $K$ and to give a new proof that the first elementary ideal $\cE_1$ is principal whenever $K$ is almost classical. 
 This was previously demonstrated by Nakamura, Nakanishi, Satoh, and Tomiyama in \cite{Nakamura-et-al} for almost classical knots, and their proof uses the Alexander numbering of $K$ to 
construct a nonzero element in the left kernel of an Alexander matrix associated to a presentation of $G_K$.
We define the Alexander polynomial $\De_K(t) \in \ZZ[t^{\pm 1}]$ for an almost classical knot or link $K$ in terms of $\cE_1$, and we show that it satisfies a skein formula, see Theorem \ref{Skein-Theorem}. Interestingly, Theorem 7 of \cite{Sawollek} implies that the skein formula does not extend to all virtual knots.   

There is an analogous result for mod $p$ almost classical knots $K$;  their first elementary ideal $\cE_1$ 
is principal over the ring $\ZZ[\zeta_p]$,
where $\zeta_p = e^{2 \pi i/p}$ is a primitive $p$-th root of unity.
Thus the Alexander ``polynomial'' $\De_K(\zeta_p)$ of a mod $p$ almost classical knot is defined, not as a Laurent polynomial, but rather as an element in $\ZZ[\zeta_p]$, the ring of integers in the cyclotomic  field $\QQ(\zeta_p)$. It is well-defined up to units in $\ZZ[\zeta_p]$.

Using Manturov's notion of parity \cite{M11}, we show how to regard mod $p$ almost classical knots in terms of an ascending filtration on the set of all virtual knots, and we prove that any minimal crossing diagram of a mod $p$ almost classical knot is mod $p$ Alexander numberable. 

Jeremy Green has classified virtual knots up to six crossings \cite{green}, and in this paper we use his notation in referring to specific virtual knots, for example in Figure \ref{AC-knots}  showing the Gauss diagrams of all 76 almost classical knots with six or fewer crossings as well as in Table \ref{acks2} which lists their Alexander polynomial $\De_K(t)$ and virtual genus $ g(\Si_K).$


\section{Virtual knots, Gauss diagrams, and knots in surfaces} \label{section1}
In this section, we recall three equivalent definitions of virtual knots. The first definition is in terms of a virtual knot or link diagram, which consists of an immersion of one or several circles in the plane with only double points, such that each double point is either classical (indicated by over- and under-crossings) or virtual (indicated by a circle). The diagram is oriented if every component has an orientation, and two oriented virtual link diagrams are \emph{virtually isotopic} if they can be related by planar isotopies and a series of \emph{generalized Reidemeister moves} ($r1$)--($r3$) and ($v1$)--($v4$) depicted in Figure \ref{VRM}. Virtual isotopy defines an equivalence relation on virtual link diagrams, and a virtual link is defined to be an equivalence class of virtual link diagrams under virtual isotopy.

\begin{figure}[ht]
\centering
\def\svgwidth{300pt}
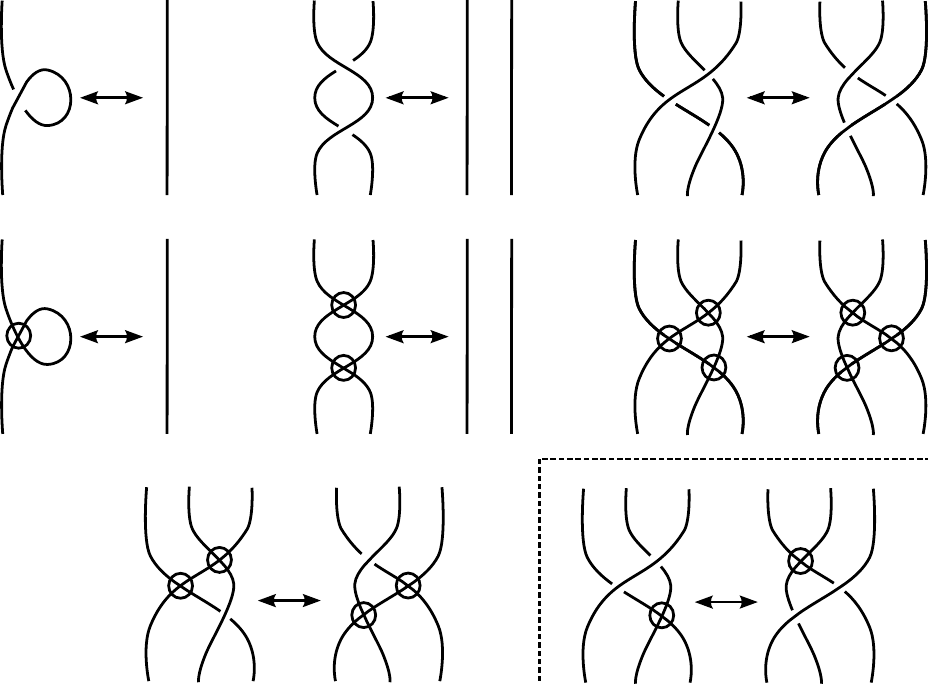
\caption{The generalized Reidemeister moves ($r1$)--($r3$) and ($v1$)--($v4$) and the forbidden overpass ($f1$).}
\label{VRM}
\end{figure}

One can alternatively define a virtual knot or link $K$ in terms of its underlying Gauss diagram, as originally proved by Goussarov, Polyak, and Viro \cite{GPV}. A Gauss diagram consists of one or several circles, one for each component of $K$, along with signed, directed chords from each over-crossing to the corresponding under-crossing. The signs on the chords indicate whether the crossing is right-handed ($+$) or left-handed ($-$). The Reidemeister moves can be translated into moves on Gauss diagrams, and two Gauss diagrams are called \emph{virtually isotopic} if they are related by a sequence of Reidemeister moves. Virtual isotopy defines an equivalence relation on Gauss diagrams, and virtual links can be defined as an equivalence classes of Gauss diagrams under virtual isotopy \cite{GPV}. 

\begin{figure}[ht]
\centering
\includegraphics[scale=0.90]{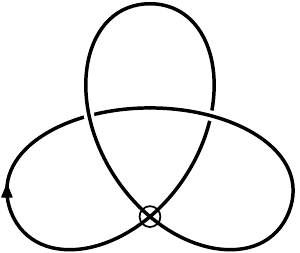} \qquad \qquad \includegraphics[scale=0.70]{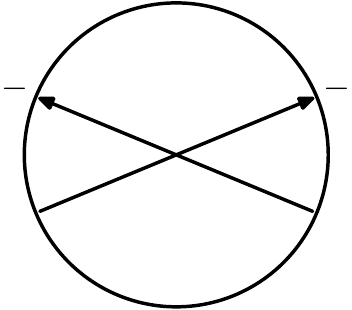}
\caption{The virtual trefoil and its Gauss diagram.}
\label{2-1}
\end{figure}

Given a Gauss diagram $D$ with chords $c_1, \ldots, c_n$, we define the index of the chord $c_i$ by counting the chords $c_j$ that intersect $c_i$ with sign and keeping track of direction.  Orient the diagram so that $c_i$ is vertical with its arrowhead oriented up. Then chords $c_j$ can intersect $c_i$ either from right to left or from left to right, and they can have sign $\ep_j = \pm 1.$  Counting them gives four numbers $r_+, r_-, \ell_+,$ and $\ell_-$ which are defined as follows:  
\begin{itemize}
\item[$r_\pm=$] number of $\pm$-chords intersecting $c_i$ with their arrowhead to the right,
\item[$\ell_\pm=$] number of $\pm$-chords intersecting $c_i$ with their arrowhead to the left.
\end{itemize}


\begin{definition} \label{chord-index}
In terms of these  numbers, 
the index of a chord $c_i$ in a Gauss diagram $D$ is defined to be  $I(c_i)=\ep_i(r_{+} - r_{-} + \ell_{-} -\ell_{+})$. 
\end{definition}
For example, the Gauss diagram in Figure \ref{2-1} has one chord with index $1$ and  another with index $-1$.

If a Gauss diagram $D$ is planar, then it is a consequence of the Jordan curve theorem that every chord $c_i$ has index $I(c_i)=0.$ This condition is necessary but not sufficient, the addition requirement is vanishing of the incidence matrix. For its definition as well as a full treatment of the planarity problem for Gauss words, see \cite{CE93, CE96}.

The third definition is in terms of knots on a thickened surface. Every virtual knot $K$ can be realized as a knot on a thickened surface, and by \cite{CKS} there is a one-to-one correspondence between virtual knots and stable equivalence classes of knots on thickened surfaces. Kuperberg proved that every stable equivalence class has a unique irreducible representative \cite{Kuperberg}, and the \emph{virtual genus} of $K$ is defined to be the minimum genus over all surfaces containing a representative of $K$. Thus, a virtual knot is classical if and only if it has virtual genus zero.
 
\begin{figure}[ht]
\centering
\includegraphics[scale=0.90]{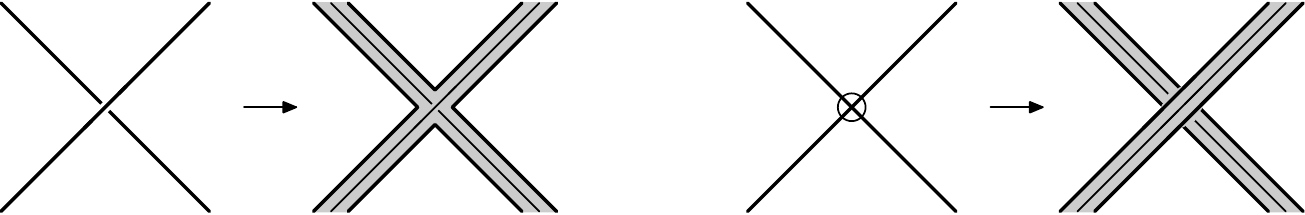}
\caption{The bands for a classical and virtual crossing.}
\label{band-surface}
\end{figure}

Given a virtual knot or link diagram $K$, there is a canonical surface $\Si_K$ called the Carter surface which contains a knot or link representing $K$. We review the construction of $\Si_K$, following the treatment of N. Kamada and S. Kamada in \cite{KK00}. (The reader may also want to consult Carter's paper \cite{Carter}.) The surface $\Si_K$ is constructed by attaching two intersecting bands at every classical crossing and two non-intersecting bands at every virtual crossing, see Figure \ref{band-surface}. Along the remaining arcs of $K$ attach non-intersecting and non-twisted bands, and the result is an oriented 2-manifold with boundary. By filling in each boundary component with a 2-disk, one obtains a closed oriented surface $\Si_K$ containing a representative of $K$.

Two virtual knots or links are said to be \emph{welded equivalent} if one can be obtained from the other by generalized Reidemeister moves plus the forbidden overpass ($f1$) in Figure \ref{VRM}. In terms of Gauss diagrams, this move corresponds to exchanging two adjacent arrow feet, see  Figure \ref{GD-forbidden}.

\begin{figure}[ht]
\centering
\includegraphics[scale=0.80]{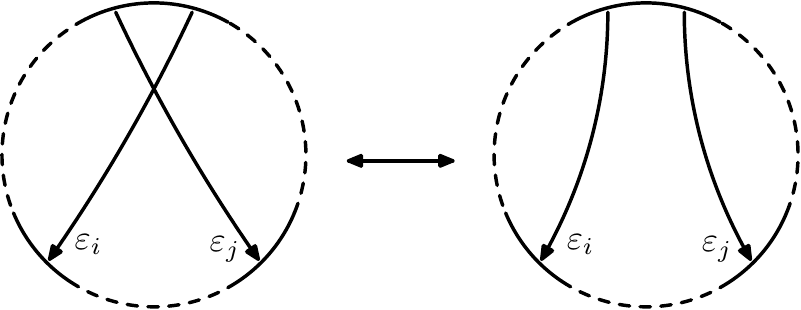}
\caption{The forbidden overpass $(f1)$ for Gauss diagrams.}
\label{GD-forbidden}
\end{figure}


\section{Group-valued invariants of virtual knots} \label{section2}
In this section, we introduce a family of group-valued invariants of virtual knots. We begin with the knot group $G_K$.

Suppose $K$ is an oriented virtual knot with $n$ classical crossings, and choose a basepoint on K. Starting at the base point, we label the arcs $a_1, a_2, \ldots, a_n$ so that at each undercrossing, $a_i$ is the incoming arc and $a_{i+1}$ is the outgoing arc. We use a consistent labeling of the crossings so that the $i$-th crossing is as shown in 
Figure \ref{Knot-Group-Wirtinger}.
For $i=1,\ldots, ,n$ let $\ep_i=\pm 1$ be according to the sign of the $i$-th crossing.
Then the {\it knot group of $K$} is the finitely presented group given by
$$G_K = \langle a_1, \ldots, a_n \mid a_{i+1} = a_j^{\ep_i} a_i a_j^{-\ep_i}, i=1, \ldots, n \rangle.$$
Note that virtual crossings are ignored in this construction.

\begin{figure}[ht]
\centering
\includegraphics[scale=0.90]{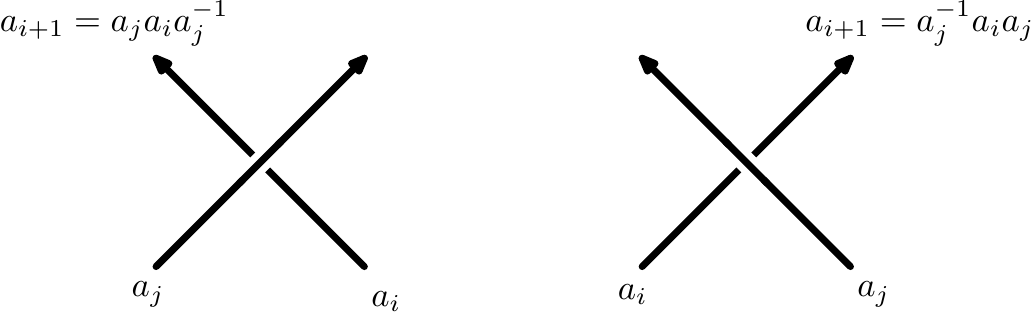}
\caption{The relations in $G_K$ from the $i$-th crossing of $K$.}
\label{Knot-Group-Wirtinger}
\end{figure}

The knot group $G_K$ is invariant under all the moves in Figure \ref{VRM} including the forbidden overpass ($f1$), thus it is an invariant of the underlying welded equivalence class of $K$. In case $K$ is classical, we have  $G_K \cong \pi_1(S^3 \sm N(K)),$ the fundamental group of the complement of $K$.

The virtual knot group $\VG_K$ was introduced in \cite{VK}; it has one generator for each short arc of $K$ and two commuting generators $s$ and $q$,  and there are two relations for each real and virtual crossing as in Figure \ref{VGK-Wirtinger}. 
Here the \emph{short arcs} of $K$ are arcs that start at one real or virtual crossing and end at the next real or virtual crossing.
\begin{figure}[ht]
\centering
\includegraphics[scale=0.90]{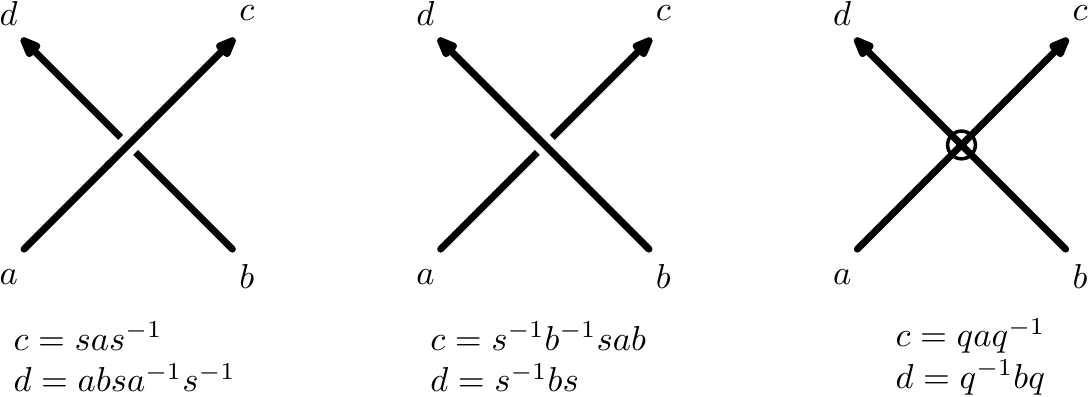}
\caption{The crossing relations for the virtual knot group $\VG_K$.}
\label{VGK-Wirtinger}
\end{figure} 

The virtual knot group $\VG_K$ has as quotients various other knot groups that arise naturally in virtual knot theory.  For instance, setting $q=s$, one obtains the welded knot group $\WG_K$, which is easily seen to be  invariant under welded equivalence (see \cite{VK}). By setting $s=1$, one obtains the quandle  group $\QG_K$, which was first introduced by Manturov in \cite{M02} and further studied by Bardakov and Bellingeri in \cite{BB}.

By setting $q=1$ one obtains the extended group $\EG_K$, which was introduced by Silver and Williams in \cite{SW-Crowell}, where it is denoted by $\wt{\pi}_K$. The extended group is closely related to the Alexander group, a countably presented group invariant of virtual knots that Silver and Williams use to define the generalized Alexander polynomial $G_K(s,t)$, see \cite{SW-Alexander}. 
 
The following diagram summarizes the relationship between $\VG_K, \QG_K,$ $\WG_K, \EG_K,$ and $G_K$.
\begin{center}
\begin{tikzcd}[column sep=large] 
&\VG_K 
\arrow{ldd}[swap]{s=1}  \arrow{dd}{q=s}  \arrow{rdd}{q=1} \\ \\
\QG_K   \arrow{rdd}[swap]{q=1} &   
\WG_K \arrow{dd}{s=1}  
&\EG_K \arrow{ldd}{s=1}  \\ \\ 
&G_K
\end{tikzcd} \\
{A commutative diamond for the augmented knot groups.}
\end{center}

Notice that the constructions of $G_K$ and $\EG_K$ make no reference to virtual crossings, and in fact for both knots the virtual crossing relations are trivial. 
Consequently, one can describe these groups entirely in terms of the Gauss diagram for $K$, which is advantageous in applying computer algorithms to perform algebraic computations. The constructions of $\QG_K$ and $\VG_K$ involve non-trivial virtual crossing relations, and so it is not immediately clear how to describe either of these groups in terms of Gauss diagrams. In the next section, we will introduce the reduced virtual knot group $\RG_K$,  and this group has the advantage of being computable from the Gauss diagram and we will see that it determines the virtual knot group $\VG_K$ and the quandle group $\QG_K$.


\section{The reduced virtual knot group} \label{section3}
In this section, we introduce the reduced knot group $\RG_K$ and 
show that the virtual knot group $\VG_K$ can be reconstituted from $\RG_K$. We then prove that $\RG_K$ is isomorphic  to both the quandle group $\QG_K$ and the extended knot $\EG_K$.  

\begin{figure}[h]
\centering
\includegraphics[scale=0.90]{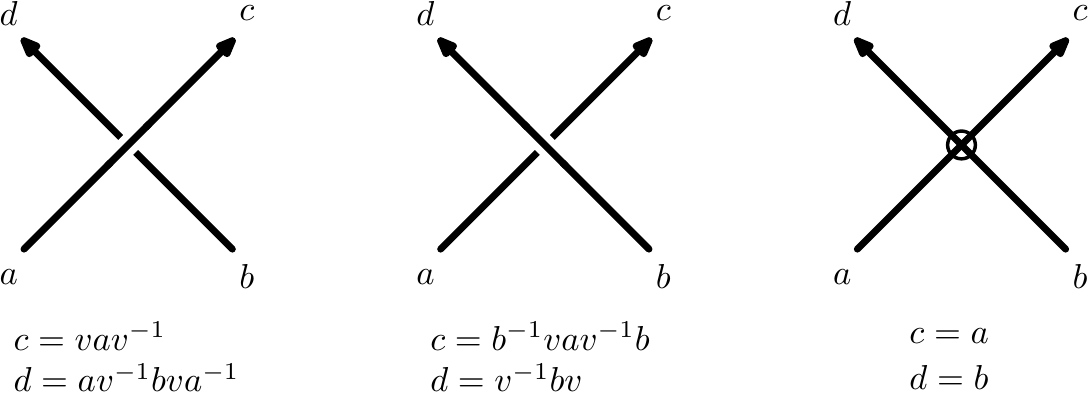}
\caption{The crossing relations for the reduced virtual knot group $\RG_K$.}
\label{Reduced-Wirtinger}
\end{figure}

Let $K$ be a virtual knot. 
The {\it reduced virtual knot group} $\RG_K$ has a Wirtinger presentation with generators given by the arcs of $K$ as labeled in Figure \ref{Reduced-Wirtinger} along with the augmentation generator $v$.  There are two relations for each classical crossing, as in Figure \ref{Reduced-Wirtinger}, and virtual crossings are ignored.

The first result in this section explains the relationship between the reduced virtual knot group $\RG_K$ and $\VG_K$. The proof will use the notion of an Alexander numbering, which we will define next.

Let $\Ga$ be a 4-valent oriented graph and 
let $S$ be the set of edges of $\Ga$.  We say that $\Ga$ has an {\it Alexander numbering} if there exists a function $\la \colon S \to \ZZ$ satisfying the relations in Figure \ref{Alexander-Numbering} at each vertex. A standard argument using winding numbers shows that any (classical) knot diagram admits an Alexander numbering. This was first observed by Alexander in \cite{Alexander}, and the argument uses the fact that an Alexander numbering of a knot diagram $\Ga$ is equivalent to a numbering of the regions of $\RR^2 \sm \Ga$ by the convention where a region $R$ has the number $\la_a$ for any edge $a$ with $R$ to its right, see Fig. 2 of \cite{Alexander} 

\begin{figure}[ht]
\centering
\includegraphics[scale=0.90]{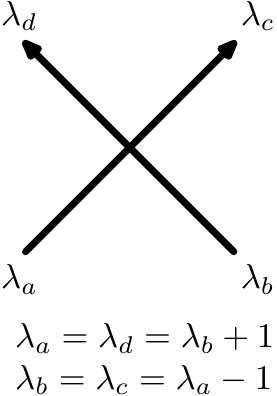}
\caption{The Alexander numbering of $\Ga$.}
\label{Alexander-Numbering}
\end{figure}

\begin{theorem}\label{vgk-amalgamation}
If $K$ is a virtual knot or link, then $\VG_K = \RG_K *_\ZZ\; \ZZ^2$.
\end{theorem}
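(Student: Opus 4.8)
The plan is to start from the Wirtinger presentations of $\VG_K$ and $\RG_K$ and convert one into the other by Tietze transformations, with the two extra generators $s,q$ of $\VG_K$ and the augmentation generator $v$ of $\RG_K$ tied together by the single relation $v=sq^{-1}$ (up to orientation conventions). First I would write $\VG_K$ on the generating set consisting of the short arcs $\{x_\al\}$ of $K$ together with $s$ and $q$, subject to $[s,q]=1$, the classical crossing relations of Figure \ref{VGK-Wirtinger}, and the virtual crossing relations. By contrast $\RG_K$ is presented on the (long) arcs $a_1,\dots,a_n$ running between consecutive classical crossings — that is, the short arcs amalgamated across every virtual crossing — together with $v$ and the relations of Figure \ref{Reduced-Wirtinger}. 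Thus the content of the theorem is that the virtual crossing relations of $\VG_K$ are exactly what is needed to (i) collapse each family of short arcs lying on a common long arc to a single generator, and (ii) record the accumulated $s,q$ weight as a power of $v=sq^{-1}$.

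The engine for step (i) is the Alexander numbering. Although a general virtual $K$ admits no global numbering, the local relations of Figure \ref{Alexander-Numbering} always hold at each crossing, and they prescribe precisely how the $s,q$-exponent attached to a short arc changes as one passes through a virtual crossing. Concretely, I would show by induction along a long arc that the virtual crossing relations let one solve for every short arc $x_\al$ on the $i$-th long arc in the form $x_\al = w_\al\, a_i\, w_\al^{-1}$ (or $w_\al a_i$, according to the convention in Figure \ref{VGK-Wirtinger}), where $a_i$ is a chosen representative short arc of the $i$-th long arc and $w_\al$ is an explicit word in $s$ and $q$ determined by the local Alexander numbers. Using these identities as Tietze eliminations removes all short-arc generators except the representatives $a_1,\dots,a_n$, leaving a presentation of $\VG_K$ on $a_1,\dots,a_n,s,q$.

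Next I would substitute these expressions into the classical crossing relations and simplify using $[s,q]=1$. The key computation — and the step I expect to be the main obstacle — is to verify that after this substitution the surviving $s,q$-factors in every classical relation combine into the single combination $sq^{-1}$, so that each relation becomes verbatim the corresponding relation of $\RG_K$ with $v$ replaced by $sq^{-1}$. This is where the bookkeeping of $s$- and $q$-exponents coming from the numbering rules of Figure \ref{Alexander-Numbering} must be carried out carefully and where commutativity of $s$ and $q$ is essential; it is also the algebraic shadow of the fact that $s=1$ and $q=1$ both collapse $\VG_K$ onto $\RG_K$ (compatibly with $\QG_K\cong\EG_K\cong\RG_K$). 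Granting this, the reduced presentation of $\VG_K$ reads: generators $a_1,\dots,a_n,s,q$; relations $[s,q]=1$ together with the $\RG_K$-relations in which $v$ denotes $sq^{-1}$.

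Finally I would recognize this presentation as the amalgamated free product. Setting $v=sq^{-1}$, the subpresentation on $a_1,\dots,a_n,v$ is exactly that of $\RG_K$, while the subpresentation on $s,q$ is that of $\ZZ^2$, and the two share only the cyclic subgroup generated by $v=sq^{-1}$. By the universal property of the free product with amalgamation this identifies $\VG_K$ with $\RG_K *_{\langle v\rangle} \ZZ^2$. To justify writing $\RG_K *_\ZZ \ZZ^2$ I would check that $\langle v\rangle$ embeds in each factor: in $\ZZ^2$ this is clear, and in $\RG_K$ it follows because $v$ already has infinite order after abelianizing (it maps to the generator recorded by the second variable of $\wbar H_K(t,v)$). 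This completes the identification.
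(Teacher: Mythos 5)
There is a genuine gap, and it sits exactly at the step you yourself flag as ``the main obstacle.'' After you eliminate the short arcs by pushing along each long arc with the virtual crossing relations, the word $w_\al$ attached to a short arc is a power $q^{e}$ whose exponent is the signed count of virtual crossings traversed between the chosen representative and that short arc. These exponents are diagram-dependent, so the residual $q$-powers left in the classical crossing relations are \emph{not} uniform and do not combine into the single element $sq^{-1}$. For instance, in the virtual trefoil (Figure \ref{2-1}) two of the four long arcs pass through the one virtual crossing and two do not: after your elimination, the relations at one classical crossing carry conjugations by $q^{\pm 1}$ while the relations at the other classical crossing carry no $q$ at all, so they cannot both be verbatim the relations of Figure \ref{Reduced-Wirtinger} with $v=sq^{-1}$. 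The failure is visible even for a classical diagram, where your elimination is vacuous: the relations that remain involve $s$ alone, i.e.\ they are the $\EG_K$-relations, and passing from those to the $\RG_K$-relations in $v$ still requires the change of variables of Theorem \ref{Duality}, not a Tietze elimination fixing the arc generators.

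The missing idea is precisely the one your sentence ``a general virtual $K$ admits no global numbering'' disclaims. The paper flattens \emph{all} crossings of $K$, virtual as well as classical, into $4$-valent vertices; the resulting graph $\Ga$ is planar, and planarity (a winding-number argument) guarantees that $\Ga$ \emph{does} admit a global Alexander numbering $\la$, one in which the numbers jump at the flattened virtual crossings too. The proof then substitutes $a' = q^{\la_a}\,a\,q^{-\la_a}$ for every short arc simultaneously: because $\la$ jumps across a virtual crossing exactly as the virtual relation conjugates by $q^{\pm 1}$, this one substitution trivializes the virtual relations (performing your elimination), and because $\la$ satisfies the local conditions of Figure \ref{Alexander-Numbering} at every classical crossing, the residual $q$-powers there become the same at every crossing (Figure \ref{VGK-2-Wirtinger}); the final substitution $a''=a's$, $v=sq^{-1}$ then yields verbatim $\RG_K$-relations. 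In your framework this amounts to re-choosing, for each long-arc generator, a conjugating power $q^{\mu_i}$ so that all classical relations become uniform; the consistency equations the $\mu_i$ must satisfy are exactly the Alexander-numbering conditions on $\Ga$, so the global planarity input cannot be avoided, and without it your key verification is false rather than merely unproved. Your closing paragraph, recognizing the resulting presentation as a pushout and checking that $\langle v\rangle\cong\ZZ$ embeds in $\RG_K$ via the abelianization, is correct and in fact addresses a point the paper's proof leaves implicit.
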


\begin{proof}
To prove this result, we will make a change of variables that depends on an Alexander numbering of the underlying $4$-valent planar graph associated to $K$. 
Flatten $K$ by replacing classical and virtual crossings with $4$-valent vertices.  Denote the resulting planar graph as $\Ga$, and note that since $\Ga$ is planar, it admits an Alexander numbering.
(In Section \ref{section5},
we will introduce Alexander numberings for virtual knots, and we will see that not all virtual knots are Alexander numberable.)

We use the Alexander numbering to make a change of variables. Starting with the generators and crossing relations of $\VG_K$ from Figure \ref{VGK-Wirtinger}, make the substitution
\begin{equation} \label{AN-subst-1}
\left\{\begin{array}{lll}
 a' & = & q^{\la_a} \, a \, q^{-\la_a} \\
 b' & = & q^{\la_b }\, b \,q^{-\la_b } \\
 c' & = & q^{\la_c} \,c \,q^{-\la_c } \\
 d' & = & q^{\la_d} \,d \,q^{-\la_d}. 
\end{array} \right.
\end{equation}

In terms of the new generators $a', b', c', d'$, it follows that $\VG_K$ has crossing relations as in Figure \ref{VGK-2-Wirtinger}. 
\begin{figure}[ht]
\centering
\includegraphics[scale=0.90]{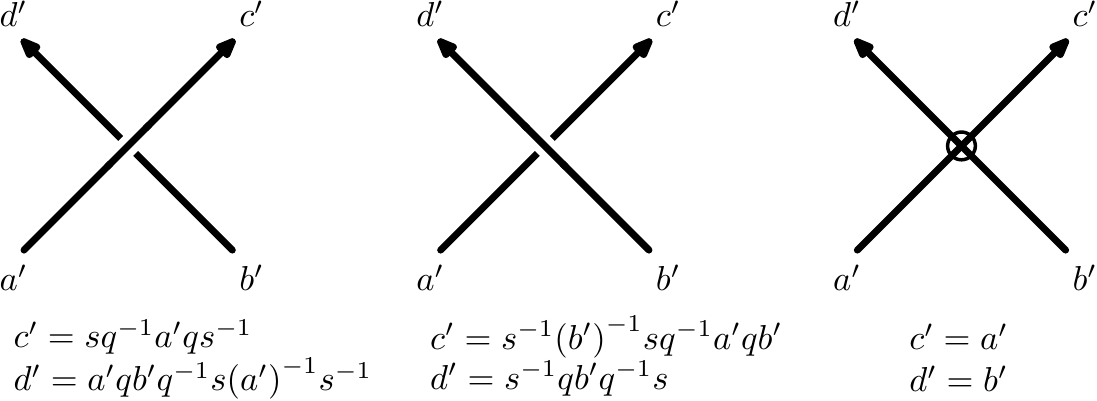}
\caption{The transformed crossing relations for  $\VG_K$.}
\label{VGK-2-Wirtinger}
\end{figure}

We make a further change of variables in $\VG_K$ by setting $a'' =  a' s,  b'' =  b' s,  c'' =  c' s,$ and $ d'' =  d' s$. Upon substituting $v$ for $sq^{-1}$ and noting that $s$ and $q$ commute in $\VG_K$, in terms of the new generators $a'',b'',c'',d''$, it is not difficult to check that the crossing relations for $\VG_K$  are identical to the relations for $\RG_K$ in Figure \ref{Reduced-Wirtinger}.
\end{proof}

\begin{remark}
A similar change of variables was used by Silver and Williams in their proof of \cite[Proposition 4.2]{SW-Crowell}. 
\end{remark}

In the next result, we use the reduced virtual knot group $\RG_K$  to construct isomorphisms between the quandle group $\QG_K$ and the extended group $\EG_K$.

\begin{theorem}\label{Duality}
If $K$ is a virtual knot or link, then the extended group $\EG_K$ and the quandle group $\QG_K$ are both isomorphic to the reduced virtual knot group $\RG_K$. 
\end{theorem}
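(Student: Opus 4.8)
The plan is to exhibit both isomorphisms $\EG_K \cong \RG_K$ and $\QG_K \cong \RG_K$ directly by comparing Wirtinger presentations, reading off the crossing relations from the figures. Recall that $\EG_K$ is obtained from $\VG_K$ by setting $q=1$ and $\QG_K$ by setting $s=1$. Since Theorem \ref{vgk-amalgamation} shows $\VG_K = \RG_K *_\ZZ \ZZ^2$ with the amalgamation identifying the central $\ZZ$ in $\RG_K$ (generated by the augmentation variable $v = sq^{-1}$) with a subgroup of $\ZZ^2 = \langle s, q\rangle$, I expect the cleanest route is to track what the substitution of the proof of Theorem \ref{vgk-amalgamation} does to the generators under each of the specializations $q=1$ and $s=1$, rather than re-deriving presentations from scratch.

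First I would handle $\EG_K$. Setting $q=1$ in the change of variables \eqref{AN-subst-1} makes $a' = a$, $b' = b$, and so on, so the Alexander-numbering conjugation collapses entirely; then $a'' = as$, etc., and $v = sq^{-1} = s$. Thus under $q=1$ the generators $a''$ of $\VG_K$ become exactly the generators of $\RG_K$, with the augmentation generator $v$ of $\RG_K$ corresponding to $s$. Comparing the resulting relations in Figure \ref{VGK-2-Wirtinger} (specialized at $q=1$) against Figure \ref{Reduced-Wirtinger}, the crossing relations should match verbatim, giving $\EG_K \cong \RG_K$. The point is that $\EG_K$ already ignores virtual crossings, so no extra relations intervene.

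Next I would treat $\QG_K$. Here setting $s=1$ gives $v = sq^{-1} = q^{-1}$, so the augmentation generator $v$ of $\RG_K$ is identified with $q^{-1}$ in $\QG_K$. The substitution $a'' = a's$ becomes $a'' = a'= q^{\la_a} a q^{-\la_a}$, so the $\QG_K$ generators are obtained from the $\RG_K$ generators by conjugating by powers of $v$ according to the Alexander numbering. I would check that this conjugation is an isomorphism of groups by verifying it carries the $\QG_K$ relations (Figure \ref{VGK-2-Wirtinger} at $s=1$) to the $\RG_K$ relations (Figure \ref{Reduced-Wirtinger}) and is invertible, the inverse being conjugation by the opposite powers of $v$. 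Combining both, $\EG_K \cong \RG_K \cong \QG_K$.

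The main obstacle I anticipate is bookkeeping rather than conceptual: I must confirm that the Alexander-numbering-dependent conjugations in \eqref{AN-subst-1} genuinely define automorphisms after specialization and that the two crossing relations per classical crossing transform correctly in each case, since the role of $s$ versus $q$ is not symmetric. In particular, for the $\QG_K$ case the fact that $v$ is central (so conjugation by $v^{\la}$ is a well-defined change of generators independent of path in the diagram) relies on $s,q$ commuting and the Alexander numbering being globally consistent, which holds because $K$ is realized on the Carter surface $\Si_K$ and the underlying $4$-valent planar graph admits a numbering. I would also note that both isomorphisms are independent of the choice of Alexander numbering, since any two differ by a global constant that conjugates all generators uniformly and hence induces an inner automorphism.
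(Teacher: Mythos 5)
Your proposal is correct and follows essentially the same route as the paper: the paper proves $\EG_K \cong \RG_K$ via the uniform shift $a' = a v^{-1}$ (then identifying $v$ with $s$) and proves $\QG_K \cong \RG_K$ via the Alexander-numbering conjugation \eqref{AN-subst-1} on the flattened planar graph, which are exactly the substitutions you recover by specializing the proof of Theorem \ref{vgk-amalgamation} at $q=1$ and $s=1$ respectively. One caveat: $v$ is \emph{not} central in $\RG_K$ (in $\VG_K$ the generators $s,q$ commute with each other, not with the arc generators), but this misstatement is harmless since your argument never actually needs centrality --- the conjugation substitutions are invertible Tietze transformations regardless.
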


\begin{proof}
We first argue that $\RG_K$ is isomorphic to $\EG_K$. To see this, we make the change of variables $ a' = a v^{-1},  b' = b v^{-1},  c' = c v^{-1},$ and $ d' = d v^{-1}$ in $\RG_K$.  Upon setting $v = s$, the crossing relations in the new variables $a',b',c',d'$ are identical with the crossing relations for $\EG_K$. 

To show that $\QG_K$ is isomorphic to $\RG_K$, we flatten $K$ by replacing classical and virtual crossings with $4$-valent vertices and denote the resulting planar graph as $\Ga$. Since $\Ga$ is planar, it admits an Alexander numbering which we use to perform a change of variables to the meridional generators of $\QG_K$ as in Equation \eqref{AN-subst-1}. 

In terms of the new generators $a',b',c',d'$, the crossing relations for $\QG_K$ are easily seen to be identical to the relations for $\RG_K$ in Figure \ref{Reduced-Wirtinger}. This completes the proof.
\end{proof}

The following diagram summarizes the results of Theorem \ref{vgk-amalgamation} and \ref{Duality}. The injective map $\RG_K \hookrightarrow \VG_K$ is induced by the map $\RG_K \, *_\ZZ \; \ZZ \hookrightarrow \RG_K \,*_\ZZ \; \ZZ^2 $ sending $v \mapsto sq^{-1}$, noting the isomorphisms $ \RG_K \, \cong \RG_K \, *_\ZZ \; \ZZ$ and 
$\VG_K \cong \RG_K \,*_\ZZ \; \ZZ^2.$

\begin{center}
\begin{tikzcd}[column sep=large] 
&\VG_K 
\arrow{ldd}[swap]{s=1}    
\arrow{rdd}{q=1} \\ \\
\QG_K \arrow[leftrightarrow]{r}{\cong}   \arrow{rdd}[swap]{q=1} &   
\RG_K \arrow[leftrightarrow]{r}{\cong} \arrow{dd}{v=1} \arrow[hookrightarrow]{uu}   
&\EG_K \arrow{ldd}{s=1}  \\ \\ 
&G_K
\end{tikzcd} \\
{A commutative diamond for the augmented knot groups.}
\end{center}

Because the crossing relations for the reduced group $\RG_K$ do not involve the virtual crossings, it follows that $\RG_K$ can be determined directly from a Gauss diagram for $K$. This has the following consequence.

\begin{corollary}
For any virtual knot or link $K$, the virtual knot group $\VG_K$ and the quandle group $\QG_K$ can be calculated from a Gauss diagram of $K$.
\end{corollary}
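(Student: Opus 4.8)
The plan is to leverage the structural results already established, specifically Theorem \ref{vgk-amalgamation} and the discussion of how $\RG_K$ encodes only classical crossing data. The key observation to exploit is that the crossing relations for $\RG_K$, as given in Figure \ref{Reduced-Wirtinger}, make no reference to virtual crossings: each relation comes from a classical crossing, and the arcs involved are determined by following the knot between classical crossings while simply passing through virtual crossings as the Gauss diagram would dictate. Thus my first step would be to verify explicitly that a Gauss diagram for $K$ contains precisely the data needed to write down a presentation of $\RG_K$. Given the Gauss diagram, the chords correspond to the classical crossings, and the arrangement of arrowheads and arrowtails along the circles, together with the signs $\ep_i$, determines both the arcs of $K$ (between consecutive classical crossings) and the incidence data needed to assign generators to the two relations at each crossing.

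Next I would assemble this into an algorithm that produces the presentation of $\RG_K$ directly from the Gauss diagram. Concretely, one labels the arcs between successive chord endpoints along each circle by generators, adjoins the augmentation generator $v$, and reads off the two Wirtinger-type relations at each chord from Figure \ref{Reduced-Wirtinger}, using the sign of the chord to determine $\ep_i$. Because virtual crossings have been absorbed into the combinatorics of the Gauss diagram — they are exactly the places where chords would cross in a planar realization but the over/under structure is immaterial — no extra relations are needed, and the resulting presentation is manifestly identical to the Wirtinger presentation of $\RG_K$ obtained from a diagram. This establishes that $\RG_K$ is computable from the Gauss diagram alone.

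The final step is to deduce the computability of $\VG_K$ and $\QG_K$ as a formal consequence. For $\VG_K$, I would invoke Theorem \ref{vgk-amalgamation}, which gives $\VG_K \cong \RG_K *_\ZZ \ZZ^2$: once a finite presentation of $\RG_K$ is in hand from the Gauss diagram, one obtains a presentation of $\VG_K$ by adjoining the two commuting generators $s, q$ of $\ZZ^2$ and the amalgamating identification $v = sq^{-1}$, all of which is purely formal and requires no further diagrammatic input. For $\QG_K$, I would appeal to Theorem \ref{Duality}, which furnishes an isomorphism $\QG_K \cong \RG_K$; hence any presentation of $\RG_K$ read off from the Gauss diagram is simultaneously a presentation of $\QG_K$.

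I expect the only genuinely substantive point — and hence the main obstacle — to be the careful bookkeeping in the first step: one must check that the cyclic ordering of arrowheads and arrowtails on the Gauss diagram, together with the chord signs, really does recover the arc structure and the correct incidence of generators in each of the two relations per crossing, so that the Gauss-diagram presentation agrees with the diagrammatic one on the nose. Once this combinatorial translation is verified, the passage to $\VG_K$ and $\QG_K$ is immediate from the two theorems, so no further difficulty arises.
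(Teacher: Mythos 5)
Your proposal is correct and follows the paper's own argument exactly: the paper also observes that the crossing relations of $\RG_K$ in Figure \ref{Reduced-Wirtinger} ignore virtual crossings (so $\RG_K$ is read off directly from the Gauss diagram) and then deduces the corollary from Theorem \ref{vgk-amalgamation} for $\VG_K$ and Theorem \ref{Duality} for $\QG_K$. The combinatorial bookkeeping you flag as the main obstacle is precisely what the paper treats as immediate, so no further work is needed.
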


As we have already noted, the group obtained from $\VG_K$ upon setting $s=q$ is invariant under welded equivalence, cf. \cite{VK}. This group is denoted $\WG_K$ and is called the welded knot group. The next result shows that the welded knot group $\WG_K$ splits as a free product of the knot group $G_K$ and $\ZZ.$

\begin{proposition} \label{welded-group-decomp}
If $K$ is a welded knot or link, then $\WG_K = G_K * \ZZ$.
\end{proposition}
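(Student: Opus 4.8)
The plan is to deduce the splitting directly from the amalgamation $\VG_K = \RG_K *_\ZZ \; \ZZ^2$ of Theorem \ref{vgk-amalgamation} by passing to the quotient that defines the welded group. Recall that $\WG_K$ is obtained from $\VG_K$ by setting $q = s$, so that $\WG_K = \VG_K/\langle\langle qs^{-1}\rangle\rangle$, and that in the amalgamation the factor $\ZZ^2$ is generated by the commuting elements $s$ and $q$, while the amalgamated subgroup $\ZZ$ is identified with the infinite cyclic subgroup $\langle v\rangle \subseteq \RG_K$ via $v = sq^{-1}$ (this is exactly the identification produced in the proof of Theorem \ref{vgk-amalgamation} and recorded in the diagram that follows Theorem \ref{Duality}).

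First I would isolate the relevant group-theoretic fact: given an amalgamated free product $A *_C B$ and a subgroup $N \subseteq B$ whose image contains the image of the amalgamating subgroup $C$, killing $N$ collapses the amalgamation into an ordinary free product, namely $(A *_C B)/\langle\langle N\rangle\rangle \cong (A/\langle\langle C\rangle\rangle) * (B/\langle\langle N\rangle\rangle)$. This is immediate from the presentations: the amalgamating relations $i_A(c) = i_B(c)$ together with $i_B(c) = 1$ force $i_A(c) = 1$, so the amalgamating relations degenerate into independent relations killing $C$ on each side, severing the two factors. I would then apply this with $A = \RG_K$, $B = \ZZ^2 = \langle s, q\rangle$, $C = \langle v\rangle = \langle sq^{-1}\rangle$, and $N = \langle qs^{-1}\rangle$. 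Setting $q = s$ sends the amalgamating generator $sq^{-1}$ to the identity and collapses $\ZZ^2$ onto $\langle s\rangle \cong \ZZ$, so the hypothesis of the fact holds and it yields $\WG_K \cong (\RG_K/\langle\langle v\rangle\rangle) * \ZZ$.

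It then remains to identify $\RG_K/\langle\langle v\rangle\rangle$ with the knot group: by the commutative diagram following Theorem \ref{Duality}, setting $v = 1$ in $\RG_K$ returns $G_K$ — equivalently, under the isomorphism $\RG_K \cong \EG_K$ of Theorem \ref{Duality}, which carries $v$ to $s$, this quotient is $\EG_K/(s=1) = G_K$. Substituting gives $\WG_K = G_K * \ZZ$, and since $G_K$ is itself a welded invariant the identity is meaningful for welded knots and links. The main obstacle is making the collapse of the amalgamation rigorous and confirming that the augmentation generator $v$ is precisely the amalgamating subgroup and is infinite cyclic, so that no unwanted identifications survive in the quotient; both facts are inherited from Theorem \ref{vgk-amalgamation}. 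The verification that $\RG_K/\langle\langle v\rangle\rangle \cong G_K$ is routine from the Wirtinger presentations, but it should be stated explicitly via the diagram rather than left implicit.
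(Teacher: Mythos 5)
Your proof is correct, but it takes a genuinely different route from the paper's. The paper proves Proposition \ref{welded-group-decomp} directly and diagrammatically: starting from the presentation of $\WG_K$ (the relations of Figure \ref{VGK-Wirtinger} with $q=s$), it flattens the diagram, uses an Alexander numbering of the resulting planar graph to make the change of variables $a' = s^{\la_a}\, a\, s^{-\la_a}$ of Equation \eqref{AN-subst-2}, then a second change $a''=a's$, after which the crossing relations (Figure \ref{Relations-2}) become exactly those of $G_K$ and $s$ survives as a free generator; this runs the same template as the proof of Theorem \ref{vgk-amalgamation} in the welded setting and is logically independent of that theorem. You instead use Theorem \ref{vgk-amalgamation} as a black box together with a general collapsing fact: quotienting $A *_C B$ by a subgroup $N \subseteq B$ containing the image of $C$ turns the amalgamating relations $i_A(c)=i_B(c)$ into the independent relations $i_A(c)=1$, $i_B(c)=1$, severing the amalgam into $(A/\langle\langle i_A(C)\rangle\rangle) * (B/\langle\langle N\rangle\rangle)$. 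Your presentation-level verification of this fact is sound, and the application is legitimate: setting $q=s$ kills precisely the amalgamating subgroup $\langle sq^{-1}\rangle = \langle v\rangle$, giving $\WG_K \cong (\RG_K/\langle\langle v\rangle\rangle) * \ZZ$, and the identification $\RG_K/\langle\langle v\rangle\rangle \cong G_K$ is the arrow labelled $v=1$ in the diagram following Theorem \ref{Duality}, which is immediate from comparing the relations of Figure \ref{Reduced-Wirtinger} at $v=1$ with those of Figure \ref{Knot-Group-Wirtinger}. There is no circularity, since Theorem \ref{vgk-amalgamation} precedes the proposition and its proof does not use it. What each approach buys: yours is shorter modulo Theorem \ref{vgk-amalgamation} and isolates the structural reason for the splitting (killing the amalgamating subgroup of an amalgam always produces a free product), whereas the paper's is self-contained at the level of diagrams; note the paper's own alternate argument at the end of Section \ref{section3}, where setting $v=1$ in the braid presentation \eqref{present3} gives the splitting, is closer in spirit to yours but still works from an explicit presentation rather than from the amalgamated product structure abstractly.
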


\begin{proof}
Let $K$ be virtual knot, then $\WG_K$ is the group generated by the short arcs of $K$ and one auxiliary variable $s$ with relations at each real or virtual crossing as in Figure \ref{VGK-Wirtinger} (setting $q=s$).

 We flatten $K$ by replacing classical and virtual crossings with $4$-valent vertices.  Denote the resulting graph as $\Ga$. Since $\Ga$ is planar, it admits an Alexander numbering, which we use  
to make the following change of variables:
 \begin{equation} \label{AN-subst-2}
\left\{\begin{array}{lll}
 a' & = & s^{\la_a} \, a \, s^{-\la_a} \\
 b' & = & s^{\la_b }\, b \,s^{-\la_b } \\
 c' & = & s^{\la_c} \,c \,s^{-\la_c } \\
 d' & = & s^{\la_d} \,d \,s^{-\la_d}. 
\end{array} \right. 
\end{equation}

In terms of the new generators $a',b',c',d',$
the crossing relations in $\WG_K$  transform into those in Figure \ref{Relations-2}.

\begin{figure}[ht]
\centering
\includegraphics[scale=0.90]{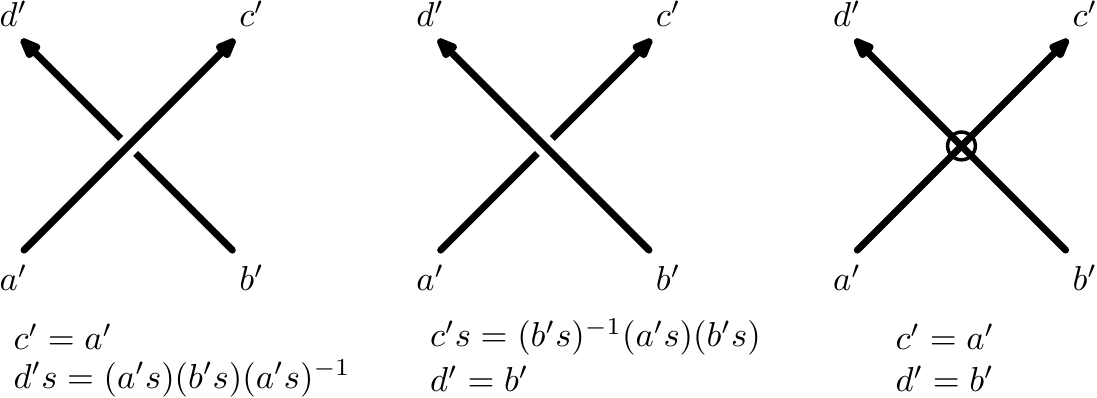}
\caption{The crossing relations for the welded knot group $\WG_{K}$}
\label{Relations-2}
\end{figure}  
We make a further change of variables by setting $a'' = a' s, b'' = b' s, c'' = c' s, d'' = d' s$, and in the new generators, one can easily see that the crossing relations are identical to those of $G_K$.
\end{proof}

We conclude this section by outlining an alternate approach to proving Theorem \ref{vgk-amalgamation} and Proposition \ref{welded-group-decomp} that involves the virtual braid group.  

The {\it virtual braid group} on $k$ strands, $\VB_k$, is defined by generators $\si_{1}, \ldots, \si_{k-1}$ and
$\tau_1,\ldots, \tau_{k-1}$ subject to the relations:
\begin{equation*}
\begin{array}{rcl}
\si_{i}\si_{j}&=&\si_{j}\si_{i} \hspace{1.5cm} \text{ if $|i-j|>1$,} \\
\si_{i}\si_{i+1}\si_{i}&=&\si_{i+1}\si_{i}\si_{i+1},\\ \\
 \tau_{i}\tau_{j}&=&\tau_{j}\tau_{i} \hspace{1.5cm} \text{ if $|i-j|>1$,} \\
\tau_{i}\tau_{i+1}\tau_{i} &=& \tau_{i+1}\tau_{i}\tau_{i+1}, \\
\tau_{i}^{2}&=&1,\\ \\
 \si_{i}\tau_{j}&=&\tau_{j}\si_{i} \hspace{1.5cm} \text{ if $|i-j|>1$,} \\
\tau_i \si_{i+1}\tau_i & = & \tau_{i+1} \si_i \tau_{i+1}.
\end{array}
\end{equation*} 

The generator $\si_i$ is represented by a braid in which the $i$-th strand crosses over the $(i+1)$-st strand and the generator $\tau_i$ is represented by a braid in which the $i$-th strand virtually crosses the $(i+1)$-st strand.

We recall the {\it fundamental representation} of $\VB_k$ from \cite[\S 4]{VK}.
For each $i=1,\ldots, k-1$, define automorphisms $\Phi(\si_i)$ and $\Phi(\tau_i)$ of
the free group $F_{k+2} = \langle x_1, \ldots, x_k, s, q\rangle$  fixing $s$ and $q$ as follows.
For $j=1,\ldots, k$, 
\begin{equation} \label{eq-fundrep}
\begin{split}
 (x_j) \Phi(\si_i) =&  \begin{cases} 
      s x_{i+1} s^{-1} &  \text{if $j=i$,} \\
      x_{i+1}x_{i} (s x_{i+1}^{-1}s^{-1}) & \text{if $j=i+1$,}\\
      x_{j} & \text{otherwise},
   \end{cases} \\
 (x_j) \Phi(\tau_i) =&  \begin{cases} 
      q x_{i+1} q^{-1} &  \text{if $j=i$,} \\
      q^{-1}x_{i}q &  \text{if $j=i+1$,} \ \\
      x_{j} & \text{otherwise}.
   \end{cases}
\end{split}
\end{equation}

Consider the following free basis for $F_{k+2}$:
\[
{\bar x_i} = \big(q^{i-1} x_i \, q^{-i +1}\big) q = q^{i-1} x_i \, q^{-i +2}  \text{ for }   1\leq i \leq k, \,
v = sq^{-1},  \text{ and }  s.
\]
Note we can use $q$ in place of $s$ for the last element of the basis.

A straightforward calculation gives the action of $\Phi(\si_i)$ and $\Phi(\tau_i)$ on the new basis, yielding:
\begin{equation} \label{eq-fundreptwo}
\begin{split}
 ({\bar x}_j) \Phi(\si_i) =&  \begin{cases} 
      v {\bar x}_{i+1} v^{-1} &  \text{if $j=i$,} \\
      {\bar x}_{i+1}{\bar x}_{i} (v {\bar x}_{i+1}^{-1}v^{-1}) & \text{if $j=i+1$,}\\
      {\bar x}_{j} & \text{otherwise},
   \end{cases} \\
 ({\bar x}_j) \Phi(\tau_i) =&  \begin{cases} 
      {\bar x}_{i+1}   &  \text{if $j=i$,} \\
      {\bar x}_{i}  &  \text{if $j=i+1$,} \ \\
      {\bar x}_{j} & \text{otherwise}.
   \end{cases}
\end{split}
\end{equation}
and $v$ and $s$ (and $q$) are fixed by $\Phi(\si_i)$ and $\Phi(\tau_i)$.

This calculation can be viewed as the non-abelian version of   \cite[Theorem 4.5]{VK}, that is, taking place in $\Aut(F_{k+2})$.

If $K= \wh{\be}$ is the closure of $\be \in \VB_k$, then the virtual knot group $\VG_K$ admits the presentation 
\begin{equation} \label{present3}
 \langle {\bar x}_1, \ldots, {\bar x}_k, v,s \mid  {\bar x}_1={\bar x}_1^\be, \ldots, {\bar x}_k={\bar x}_k^\be, [v,s]=1 \rangle.
\end{equation} 

Observe that $s$ does not appear in ${\bar x}_i^\be$ for $i=1,\ldots, k,$ and
 setting $s=1$ in (\ref{present3}) one obtains a presentation for the reduced virtual knot group $\RG_K,$ whereas setting $v=1$  in (\ref{present3}) one obtains a presentation for the welded knot group $\WG_K.$
It now follows easily from  (\ref{present3}) that  $\WG_K = G_K * \ZZ$  ($v$ goes to $1$) and $\VG_K = \RG \, *_\ZZ\; \ZZ^2$.


\section{Virtual Alexander invariants} \label{section4}
In this section, we introduce the Alexander invariants of the knot group $G_K$ and the reduced virtual knot group $\RG_K$.

We begin by recalling the construction of the Alexander invariants for the knot group $G_K = \langle a_1, \ldots, a_n \mid r_1, \ldots, r_n\rangle.$ Let $G_K' =[G_K,G_K]$ and $G_K'' = [G_K',G_K']$ be the first and second commutator subgroups, then the Alexander module is defined to be the quotient $G_K' / G_K''.$ It is finitely generated as a module over the ring $\ZZ[t^{\pm 1}]$, and it can be completely described in terms of a presentation matrix $A$, which is the $n \times n$ matrix obtained by Fox differentiating the relations $r_i$ of $G_K$ with respect to the generators $a_j$.
While the matrix $A$ will depend on the choice of presentation for $G_K$,  the associated sequence of elementary ideals
\begin{equation}\label{chain}
 (0)= \cE_0 \subset \cE_1 \subset \cdots \subset \cE_n  = \ZZ[t^{\pm 1}]
 \end{equation}
does not. Here, the $k$-th elementary ideal $\cE_k$ is defined as the ideal of $\ZZ[t^{\pm 1}]$ generated by all $(n-k) \times (n-k)$ minors of $A$. The Alexander invariants of $K$ are then defined in terms of the ideals \eqref{chain}, and since $G_K$ is an invariant of the welded type, these ideals are welded invariants of $K$.

For purely algebraic reasons, the zeroth elementary ideal $\cE_0$ is always trivial, and the reason is that 
  the fundamental identity of Fox derivatives implies that one column of $A$ can be written as a linear combination of the other columns. This is true for both classical and virtual knots, and it  is an algebraic defect of the knot group $G_K$. 
  
  Replacing $G_K$ by the reduced virtual knot group $\RG_K$, we will see that one obtains an interesting invariant from the zeroth elementary ideal of the associated Alexander module. 
In \cite{VK}, we introduce virtual Alexander invariants of $K$, which are defined in terms of the Alexander ideals of the virtual knot group $\VG_K$. Since $\VG_K$  abelianizes to $\ZZ^3$, its Alexander module is a module over $\ZZ[s^{\pm 1}, t^{\pm 1}, q^{\pm 1}]$. The zeroth order elementary ideal is typically non-trivial, and we define $H_K(s,t,q)$ to be the generator of the smallest principal ideal containing it. Thus $H_K(s,t,q)$ is an invariant of virtual knots, and in \cite{VK}, we show that $H_K(s,t,q)$ admits a normalization, satisfies a skein formula, and carries information about the virtual crossing number of $K$.   In \cite{VK}, we proved a formula relating  the virtual Alexander polynomial $H_K(s,t,q)$ to the generalized Alexander polynomial $G_K(s,t)$ defined by Sawollek \cite{Sawollek}, and in equivalent forms by Kauffman--Radford \cite{KR}, Manturov \cite{M02}, and Silver--Williams \cite{SW-Alexander, SW-03}.  Specifically, Corollary 4.8 of \cite{VK} shows that
$$
H_K(s,t,q) = G_K(sq^{-1}, tq).
$$

Theorem \ref{vgk-amalgamation} implies that the Alexander module of $\RG_K$ contains the same information as the  Alexander module of $\VG_K,$ and Theorem \ref{Duality} shows the same is true for the Alexander modules of the quandle group $\QG_K$ and the extended group $\EG_K$. We shall therefore focus our efforts on understanding the Alexander invariants associated to  $\RG_K \cong \QG_K \cong \EG_K$.

Let $\RG_K = \langle a_1, \ldots, a_n, v \mid r_1, \ldots, r_n \rangle$ be a presentation of the reduced virtual knot group. Its Alexander module is defined to be the quotient ${\RG^{\, \prime}_K}/\RG_K^{\, \prime \prime},$ where $\RG_K^{\, \prime}$ and $\RG_K^{\, \prime \prime}$ 
denote the first and second commutator subgroups of $\RG_K$.
It is finitely generated as a module over $\ZZ[t^{\pm t}, v^{\pm 1}]$, and one obtains a presentation matrix for it by Fox differentiating the relations of $\RG_K$ with respect to the generators.
Specifically, the $n \times (n+1)$ matrix $$
M=
\begin{bmatrix}
A & \left(\frac{\partial r}{\partial v}\right)
\end{bmatrix}
$$
is a presentation matrix for this module. Here the $(i,j)$-th entry of $A$ is given by $\left. \frac{\partial r_i}{\partial a_j}\right|_{a_1,\ldots,a_n=t}$  and $\left(\frac{\partial r}{\partial v}\right)$ is a column vector with $i$-th entry given by $\left. \frac{\partial r_i}{\partial v_{}}\right|_{a_1,\ldots,a_n=t}$.
Let 
\begin{equation}\label{bar-chain}
 (0) \subset \bar{\cE}_0 \subset \bar{\cE}_1 \subset \cdots \subset \bar{\cE}_n  = \ZZ[t^{\pm 1}]
 \end{equation}
denote the sequence of elementary ideals, where $\bar{\cE}_k$ is the ideal of $\ZZ[t^{\pm 1}, v^{\pm 1}]$ generated by the $(n-k) \times (n-k)$ minors of $M$.
The  $k$-th Alexander polynomial $\bar{\De}^\ell_K(t,v)$ is defined to be the generator for the smallest principal ideal containing $\bar{\cE}_k$; alternatively it is given by the $\gcd$ of the $(n-k) \times (n-k)$ minors of $M$. 

We use $\wbar{H}_K(t,v) =\bar{\De}^0_K(t,v)$ to denote the zeroth Alexander polynomial associated to $\RG_K$. Note that $\wbar{H}_K(t,v)$ is well-defined up to multiplication by units in $\ZZ[t^{\pm 1}, v^{\pm 1}]$
and is closely related to the virtual Alexander polynomial $H_K(s,t,q)$ and the generalized Alexander polynomial $G_K(s,t)$. In particular, one can define a normalization of $\wbar{H}_K(t,v)$ using braids by following the approach  in Section 5 of \cite{VK}, and one can also derive lower bounds on $v(K)$, the virtual crossing number of $K$, from $\wbar{H}_K(t,v)$ and its normalization.

\begin{theorem}
\label{GKpolyrelatedtoHbarpoly}
The generalized Alexander polynomial $G_K(s,t)$ is related to the reduced Alexander polynomial $\wbar{H}_K(t, v)$ by the formula
$$
G_K(s, t) = \wbar{H}_K(st, s).
$$
\end{theorem}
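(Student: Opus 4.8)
The plan is to derive the formula from two ingredients already available: the identity $H_K(s,t,q)=G_K(sq^{-1},tq)$ of Corollary 4.8 of \cite{VK}, and a comparison of the virtual Alexander polynomial $H_K$ with the reduced polynomial $\wbar H_K$ coming from Theorem \ref{vgk-amalgamation}. Concretely, I would first establish, up to units,
\[
H_K(s,t,q)\ \doteq\ \wbar H_K\bigl(st,\,sq^{-1}\bigr),
\]
and then combine it with Corollary 4.8. Because $(sq^{-1})(tq)=st$, the substituted polynomials $G_K(sq^{-1},tq)$ and $\wbar H_K(st,sq^{-1})$ would then agree as functions of $(s,t,q)$; specializing $q=1$ collapses this to $G_K(s,t)=\wbar H_K(st,s)$, which is exactly the claim. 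So the whole theorem reduces to the displayed comparison of $H_K$ with $\wbar H_K$.

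To obtain that comparison I would use the change of variables from the proof of Theorem \ref{vgk-amalgamation}. Starting from the Wirtinger presentation of $\VG_K$, one sets $a''=q^{\la_a}a\,q^{-\la_a}s$ for each arc and $v=sq^{-1}$, which turns the crossing relations of $\VG_K$ into those of $\RG_K$ in the generators $a''$ and $v$, leaving $s$ as an extra generator subject only to $[v,s]=1$. On abelianizations each arc generator $a''$ maps to $st$ and $v$ maps to $sq^{-1}$, so the Fox-derivative block of the $\VG_K$ Alexander matrix arising from the crossing relations is precisely the $\RG_K$ Alexander matrix with its two variables specialized by $t\mapsto st$ and $v\mapsto sq^{-1}$. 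I would then read off the zeroth elementary ideal of $\VG_K$ from this presentation and identify the gcd of its maximal minors, after the substitution, with $\wbar H_K(st,sq^{-1})$.

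The main obstacle is the bookkeeping of the extra generator $s$ and the relation $[v,s]=1$, so that no spurious factor appears when the two zeroth Alexander polynomials are matched: the row of the $\VG_K$ Alexander matrix coming from $[v,s]$ contributes entries $1-s$ and $v-1$, and a naive gcd of maximal minors can return an associate of $\wbar H_K$ differing by a factor such as $(sq^{-1}-1)$. The cleanest way to sidestep this is to route the argument through Theorem \ref{Duality} instead. The isomorphism $\RG_K\cong\EG_K$ given by $a\mapsto a v^{-1}$, $v\mapsto s$ is an honest group isomorphism, and naturality of the Alexander polynomial under isomorphism — which, unlike specializing a variable, cannot create or destroy factors — carries $\wbar H_K(t,v)$ to the Alexander polynomial of $\EG_K$ via $t\mapsto st$, $v\mapsto s$. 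Since $G_K(s,t)$ is, up to units, the Alexander polynomial of $\EG_K$ (the content of Corollary 4.8 at $q=1$), this gives $G_K(s,t)=\wbar H_K(st,s)$ directly. I expect pinning down the grading conventions — which variable of $G_K$ is the meridian and which the augmentation — to be the one genuinely delicate point.
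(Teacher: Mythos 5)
Your final argument---routing through the isomorphism $\RG_K \cong \EG_K$ of Theorem \ref{Duality} and tracking the induced substitution $t \mapsto st$, $v \mapsto s$ on Alexander polynomials---is precisely the paper's proof: the paper implements this ``naturality'' concretely by applying the chain rule for Fox derivatives to the automorphism $a_j \mapsto a_j v$ that carries relators of $\RG_K$ to relators of $\EG_K$, and then abelianizing. So your proposal is correct and takes essentially the same approach; the preliminary detour through $\VG_K$, the amalgamation of Theorem \ref{vgk-amalgamation}, and Corollary 4.8 is unnecessary, exactly as you conclude.
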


\begin{proof}
Choose a presentation for the reduced virtual knot group $\RG_K$ with relations as in Figure \ref{Reduced-Wirtinger}.  
Given a word $w$ in $a_1,\ldots, a_n, v$ and an automorphism $f \in \Aut(F_{n+1})$, we write $w^f$ for the word obtained by applying $f$ to $w$. Notice that this extends to give an automorphism of the group ring $\ZZ[F_{n+1}].$
If $r_i$ is a relator of $\RG_K$, then we see that $\tilde r_i = {r_i}^f$ is a relator of $\EG_K$ where $f$ is the automorphism that sends meridional generators $a_j \to a_jv$ and fixes the augmented generator $v$.  Then by the chain rule for Fox derivatives, see Equation $2.6$ in \cite{Fox1953}, we have that 
$$
\frac{\partial \tilde r_i}{\partial a_j} = \frac{\partial (r_i)^f}{\partial a_j} =  \left( \frac{\partial {r_i}}{\partial a_k}\right)^f 
\frac{\partial a_k^f}{\partial a_j}.
$$ 
After abelianizing, and multiplying by an appropriate unit,  it follows that 
$$
\left(\frac{\partial \tilde r_i}{\partial a_j}\right)(v, t) = \left(\frac{\partial {r_i}}{\partial a_j}\right)(tv, v).
$$
\end{proof}

In \cite{VK}, we define twisted virtual Alexander invariants $H^\varrho_K(s,t,q)$ associated to representations $\varrho \colon \VG_K \to GL_n(R)$. The twisted virtual Alexander polynomials admit a natural normalization, and one can derive bounds on $v(K)$, the virtual crossing number of $K$ from $H^\varrho(s,t,q)$ and its normalization.

Using the reduced knot group $\RG_K$ in place of $\VG_K$, one can similarly define twisted Alexander invariants $\wbar{H}^\varrho_K(t,v)$ associated to representations $\varrho \colon \RG_K \to GL_n(R)$ by following the approach in \cite[\S 7]{VK}. In particular, the twisted Alexander polynomial $\wbar{H}^\varrho_K(t,v)$ admits a normalization and carries information about the virtual crossing number $v(K)$ just as before, but it is easier to work with since $\RG_K$ admits a presentation with fewer generators and relations than $\VG_K$.


\section{Almost classical knots}\label{section5}   

In \cite[Definition 4.3]{SW-Crowell}, Silver and Williams coined the term  ``almost classical knot" for any virtual knot $K$ represented by a diagram with an Alexander numbering, and in \cite[Definition 2.4]{carteretal} they use mod 2 almost classical knot to refer to any virtual knot represented by a diagram with a mod $2$ Alexander numbering. The next definition provides a natural extension of these ideas by introducing the notion of a mod $p$ almost classical knot.

\begin{definition} \label{Mod-p-Def}
\begin{enumerate}
\item[(i)] Given an integer $p \geq 0$, we say that a virtual knot diagram is  {\it mod $p$ Alexander numberable} if there exists an integer-valued function $\la$ on the set of short arcs satisfying the relations in Figure \ref{Alexander-Numbering-Def}. In case $p=0,$ we say the diagram is \emph{Alexander numberable}.
\item[(ii)] Given a virtual knot or link $K$, we say $K$ is \emph{mod $p$ almost classical} if it admits a virtual knot diagram that is mod $p$ Alexander numberable. In case $p=0,$ we say that $K$ is \emph{almost classical}.
\end{enumerate}
\end{definition}
Note that if a knot is almost classical, then it is mod $p$ almost classical for all $p$. Note also that when $p=1$ there is no condition on $K$, so we will assume $p \neq 1$ throughout.

By flattening classical and virtual crossings, as in Theorem \ref{Duality}, the resulting planar graph $\Ga$ always has an Alexander numbering. The condition that a virtual knot diagram have an Alexander numbering is very restrictive.  For instance, there are only four distinct non-trivial almost classical knots with up to four crossings, see Theorem $1.3$ of \cite{Nakamura-et-al}.

Definition \ref{Mod-p-Def} has natural interpretations for Gauss diagrams and for  knots in surfaces as we now explain.
For Gauss diagrams, this hinges on the fact that a Gauss diagram is mod $p$ Alexander numberable if and only if each chord $c_i$ has index $I(c_i) = 0$ mod $p$ (see Definition \ref{chord-index}). The verification of this fact is not difficult and is left as an exercise. 

For knots in surfaces, we claim that a knot $K$ in a thickened surface $\Si\times [0,1]$ is mod $p$ Alexander numberable if and only if it is homologically trivial as an element in $H_1(\Si; \ZZ/p),$ where $\ZZ/p$ denotes the cyclic group of order $p$. In case $p=2,$ the proof of this can be found in \cite[\S 3]{carteretal}. (That argument also explains why, for a virtual knot diagram, the notion of a mod 2 Alexander numbering and checkerboard coloring coincide.) 
In general, recall from Section \ref{section1} that given a virtual knot diagram $K$, its Carter surface $\Si_K$ is constructed by 
first thickening the classical and virtual crossings of $K$ as in Figure \ref{band-surface} and then adding finitely many 2-disks $D_1, \ldots, D_n$. Set $\Si =\Si_K$ and let $K$ be the associated knot in the thickened surface $\Si\times [0,1]$.

A mod $p$ Alexander numbering of $K$ induces a mod $p$ Alexander numbering of the 2-disks $D_1, \ldots, D_n$ as follows. The Carter surface is built as a CW complex with 1-skeleton given by the thickening of $K$ and with 2-cells given by $D_1,\ldots, D_n$. We number $D_i$ using the Alexander number of any edge $a$ of $K$ with  $D_i$ to its right. Let $\la_i$ denote the associated number, and the conditions of Figure \ref{Alexander-Numbering} guarantee that $\la_i$ is well-defined and independent of choice of edge. They further imply that for every edge of $K$, the Alexander number of the 2-disk to its right is exactly one more than the Alexander number of the 2-disk to its left. It follows that $\partial \left( \la_1 D_1 + \cdots + \la_n D_n\right) = K$ with coefficients in $\ZZ/p.$ Thus, if $K$ admits a mod $p$ Alexander numbering, then the associated knot in the surface is homologically trivial in $H_1(\Si; \ZZ/p).$ Conversely, if $K$ is a knot in $\Si \times [0,1]$ and is homologically trivial in $H_1(\Si; \ZZ/p)$, then that induces a mod $p$ Alexander numbering of the 2-disks $D_1, \ldots, D_n$ of the Carter surface $\Si_K$, which in turn gives a mod $p$ Alexander numbering of $K$.

\begin{figure}[ht]
\centering
\includegraphics[scale=0.90]{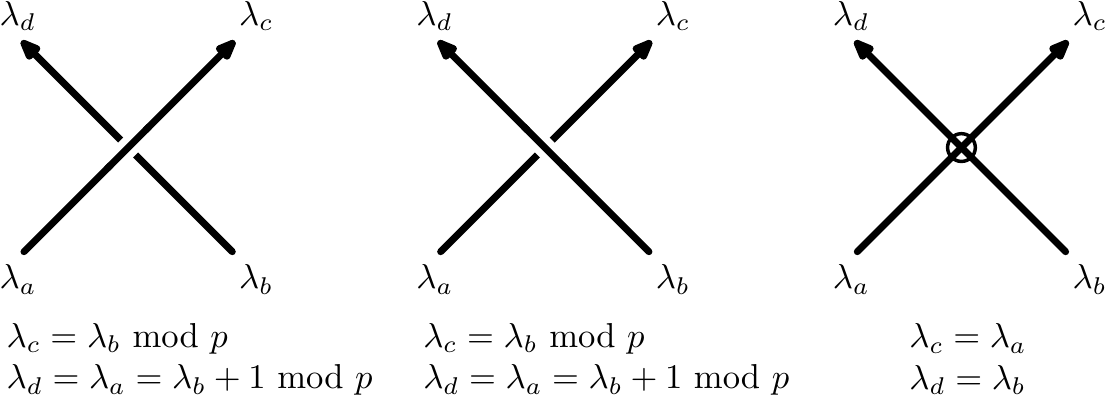}
\caption{The mod $p$ Alexander numbering conditions.}
\label{Alexander-Numbering-Def}
\end{figure}

The following theorem shows that $\RG_K$ gives an obstruction for a virtual knot or link to be  mod $p$ almost classical. 
\begin{theorem}\label{AC-Augmented-Theorem}
If $K$ is a mod $p$ almost classical knot or link, then it follows that $\RG_K / \langle v^p \rangle \cong G_K*\ZZ/p$. If $K$ is almost classical, then $\RG_K \cong G_K*\ZZ$.
\end{theorem}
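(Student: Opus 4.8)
The plan is to adapt the Alexander-numbering change of variables used in the proof of Proposition~\ref{welded-group-decomp} to the group $\RG_K$. The essential new point is a comparison of \emph{which} Alexander numbering is available. In Proposition~\ref{welded-group-decomp} (and in Theorems~\ref{vgk-amalgamation} and~\ref{Duality}) the generators are short arcs and the numbering used is that of the flattened $4$-valent planar graph, which always exists; there virtual crossings are honest vertices. By contrast, the generators of $\RG_K$ are the arcs of $K$ between classical crossings, i.e.\ the arcs one reads off the Gauss diagram, and to run the substitution I need a single integer $\la_{a_i}$ attached to each such generator, changing only at classical crossings. Assigning such a number to every Gauss-diagram arc is exactly the data of a (mod $p$) Alexander numbering of the virtual diagram in the sense of Definition~\ref{Mod-p-Def}, and this is available precisely when $K$ is (mod $p$) almost classical. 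This is the structural reason the splitting holds for almost classical knots but fails in general.

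For the almost classical case I would fix an Alexander-numberable diagram and let $\la$ assign the integer $\la_{a_i}$ to each generating arc $a_i$ of $\RG_K$. Starting from the Wirtinger presentation in Figure~\ref{Reduced-Wirtinger}, I would perform the two-step substitution of the proof of Proposition~\ref{welded-group-decomp}, with $v$ in the role of $s$: first conjugate $a_i \mapsto v^{\la_{a_i}} a_i v^{-\la_{a_i}}$ as in \eqref{AN-subst-2}, then apply the subsequent shift $a_i \mapsto a_i v$. The claim to verify is that in the new generators every crossing relation becomes free of $v$ and reduces to the classical relation $a_{i+1} = a_j^{\ep_i} a_i a_j^{-\ep_i}$ of $G_K$. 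Since setting $v=1$ in $\RG_K$ already recovers $G_K$ (the map $\RG_K \to G_K$ in the commutative diamond), what must be checked is only that the net exponent of $v$ contributed by the conjugating factors cancels at each crossing, and the numbering conditions of Figure~\ref{Alexander-Numbering-Def} are precisely the relations among $\la_a,\la_b,\la_c,\la_d$ that force this. Once confirmed, the transformed arcs satisfy exactly the relations of $G_K$ while $v$ is unconstrained, so $\RG_K \cong G_K * \ZZ$.

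The mod $p$ case follows the same recipe after one adjustment. When $K$ is only mod $p$ almost classical, $\la$ is defined only in $\ZZ/p$, so $v^{\la_{a_i}}$ is not yet meaningful. I would therefore first pass to $\RG_K/\langle v^p\rangle$, where $\langle v^p\rangle$ denotes the normal subgroup generated by $v^p$; there $v^p=1$, so $v^{\la_{a_i}}$ depends only on $\la_{a_i} \bmod p$ and the same change of variables is well defined. Running the computation of the previous paragraph in this quotient again removes every occurrence of $v$ from the relations, leaving the relations of $G_K$ together with a generator $v$ of order $p$, whence $\RG_K/\langle v^p\rangle \cong G_K * \ZZ/p$. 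The case of links is identical, using an Alexander numbering of the link diagram.

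I expect the main obstacle to be the exponent bookkeeping in the second paragraph: pinning down the precise powers of $v$ in the relations of Figure~\ref{Reduced-Wirtinger} for both signs of crossing and checking that their vanishing is literally the Alexander numbering condition (equivalently, the chord-index condition $I(c_i)\equiv 0 \bmod p$). A cleaner alternative that sidesteps some of this uses the braid presentation \eqref{present3}: writing $K=\wh{\be}$ for $\be\in\VB_k$ and setting $s=1$ yields $\RG_K=\langle \bar x_1,\dots,\bar x_k, v \mid \bar x_i=\bar x_i^{\be}\rangle$ with $\be$ acting through \eqref{eq-fundreptwo}. An Alexander numbering of the closed braid supplies exponents $\mu_i$ so that the substitution $\bar x_i\mapsto v^{\mu_i}\bar x_i v^{-\mu_i}$ transforms the action \eqref{eq-fundreptwo} into its $v=1$ specialization, namely the classical braid action defining $G_K$; as $v$ is then a free generator, this gives $\RG_K = G_K * \ZZ$, with the mod $p$ statement following by the same quotient.
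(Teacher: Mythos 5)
Your overall strategy is the paper's: choose a mod $p$ Alexander numberable diagram, pass to the quotient $\RG_K/\langle v^p\rangle$ so that conjugation by $v^{\la}$ is well defined for $\la\in\ZZ/p$, and change variables using that numbering; your opening paragraph also correctly isolates why the relevant numbering is the one on the virtual diagram itself (Definition \ref{Mod-p-Def}), rather than the always-available numbering of the flattened planar graph used in Theorems \ref{vgk-amalgamation}, \ref{Duality} and Proposition \ref{welded-group-decomp}. However, your main route contains a genuine error: the second step, the shift $a_i\mapsto a_iv$, must be omitted. The paper's proof is the conjugation $a\mapsto v^{\la_a}av^{-\la_a}$ and nothing else. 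The point is that the crossing relations of $\RG_K$ in Figure \ref{Reduced-Wirtinger} are \emph{not} the welded relations with $v$ in place of $s$; the shift is already built into them. This is visible in two places in the paper: in the proof of Theorem \ref{vgk-amalgamation} the relations of $\RG_K$ are produced by applying conjugation \emph{and} the shift to the relations of $\VG_K$, and in the proof of Theorem \ref{GKpolyrelatedtoHbarpoly} the substitution $a_j\mapsto a_jv$ is exactly what carries the relators of $\RG_K$ to those of $\EG_K$. Consequently, performing conjugation followed by your shift on $\RG_K$ amounts to performing conjugation alone on the $\EG_K$-form relations, and conjugation alone cannot clear the augmented variable from those --- that failure is precisely why Proposition \ref{welded-group-decomp} needs its second step for $\WG_K$. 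One can also see the failure directly: the paper verifies that the conjugation alone already turns the relations of Figure \ref{Reduced-Wirtinger} into the Wirtinger relations of $G_K$, e.g.\ $c'=(a')^{-1}b'a'$ at a positive crossing; rewriting this in your shifted generators $a''=a'v$ gives $c''=v(a'')^{-1}b''v^{-1}a''$, which still involves $v$, and checking the exponent identities at a single crossing shows that no reassignment of the numbering can make the composite substitution $a\mapsto v^{\mu_a}av^{1-\mu_a}$ eliminate $v$. So the verification you yourself flag as the main obstacle (``every crossing relation becomes free of $v$'') would fail for the two-step substitution.

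The repair is simply to drop the shift, and your ``cleaner alternative'' at the end already is the repaired proof: conjugating $\bar x_i\mapsto v^{\mu_i}\bar x_iv^{-\mu_i}$, with no shift, so as to turn the action \eqref{eq-fundreptwo} into its $v=1$ specialization (the Artin action defining $G_K$) is the paper's change of variables transplanted to the braid presentation \eqref{present3}. With that single correction, the rest of your argument --- the quotient $\RG_K/\langle v^p\rangle$ to make mod $p$ exponents meaningful, the identification of the numbering conditions with the cancellation of $v$-exponents, and the conclusions $\RG_K/\langle v^p\rangle\cong G_K*\ZZ/p$ and $\RG_K\cong G_K*\ZZ$ --- coincides with the paper's proof.
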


\begin{proof}
Suppose $K$ is mod $p$ almost classical knot and choose a diagram for it that admits a mod $p$ Alexander numbering.  
Consider the quotient group $\RG_K/\langle v^p \rangle$ obtained by adding the relation $v^p =1$, and make the following change of variables to the generators of Figure \ref{Reduced-Wirtinger} according to the mod $p$ Alexander numbering of Figure \ref{Alexander-Numbering-Def}:
$$ \begin{array}{lll}
 a' & = & v^{\la_a} \, a \, v^{-\la_a} \\
 b' & = & v^{\la_b }\, b \, v^{-\la_b } \\
 c' & = & v^{\la_c} \, c \, v^{-\la_c } \\
 d' & = & v^{\la_d} \, d \, v^{-\la_d}. 
\end{array}$$
It is not difficult to check that, in the new generators $a',b',c',d',$ the crossing relations coincide with those for $G_K$.
\end{proof}

\begin{example} We will use Theorem \ref{AC-Augmented-Theorem} to see that the virtual knot $K = 4.41$ depicted in Figure \ref{4-41} is not mod 2 almost classical. Since this knot has $\wbar{H}_K(t, v)=0$, we are unable to conclude this from Corollary \ref{vanishatzeta} below.

Since $K$ is welded trivial, its knot group $G_K \cong \langle a \rangle$ is trivial. 
Using sage, we determine that $K$ has reduced virtual knot group 
\begin{eqnarray*}
\RG_K &=& \langle a, v \mid  v a^{-1}v^{-3} a^{-1} v^2 a^{-1} v^{-2} a v^3 a (v^ {-1} a^{-1})^2 \cdot \\
&& \hspace{1.5cm}  v^2 a v^{-2} (a v)^2 a^{-1} v^{-3} a^{-1} v^2 a v^{-2} a v^2 \rangle.
\end{eqnarray*}
Again using sage, we show that
$$\RG_K/\langle v^2 \rangle = \langle a, v \mid  v^2, \; v a^{-1} v a^{-1} (v a v a^{-1} v a)^2  \rangle.$$
One can show that this group admits precisely 18 representations into $S_3$, whereas the group $G_K * \ZZ/2 = \langle a, v \mid  v^2\rangle$ admits 24. This shows that $\RG_K/\langle v^2 \rangle$ is not isomorphic to $G_K * \ZZ/2$, and Theorem \ref{AC-Augmented-Theorem} implies that $K =4.41$ is not mod 2 almost classical.   
\end{example}

\begin{figure}[ht]
\centering
\includegraphics[scale=1.50]{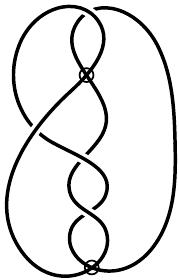}
\caption{The virtual knot $K=4.41$.}
\label{4-41}
\end{figure}

\begin{corollary}
\label{vanishatzeta}
If $K$ is a mod $p$ almost classical knot or link and $\zeta$ is a non-trivial $p$-th root of unity, then $\wbar{H}_K(t, \zeta) = 0$. If $K$ is almost classical, then $\wbar{H}_K(t, v)=0.$
\end{corollary}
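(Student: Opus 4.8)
The plan is to derive Corollary \ref{vanishatzeta} directly from the structural splitting of $\RG_K$ established in Theorem \ref{AC-Augmented-Theorem}, by translating the group isomorphism into a statement about Fox derivatives and elementary ideals. Recall that $\wbar H_K(t,v) = \bar\De^0_K(t,v)$ is the zeroth Alexander polynomial, namely the generator of the smallest principal ideal containing $\bar\cE_0$, where $\bar\cE_0$ is generated by the $n\times n$ minors of the $n\times(n+1)$ presentation matrix $M = [\,A \mid (\partial r/\partial v)\,]$ obtained by Fox differentiating the relators of $\RG_K$. The key observation is that whenever $\RG_K$ admits a presentation as a free product, the Fox Jacobian of that presentation acquires a degenerate block structure that forces all top-order minors to vanish after abelianization.

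First I would treat the almost classical case, where Theorem \ref{AC-Augmented-Theorem} gives $\RG_K \cong G_K * \ZZ$. The free factor $\ZZ$ is generated by $v$, and in the free product presentation the relators $r_1,\dots,r_n$ are precisely the relators of $G_K$, which are words in the meridional generators $a_1,\dots,a_n$ \emph{alone} and do not involve $v$. Consequently the column $(\partial r/\partial v)$ of $M$ is identically zero, since $\partial r_i/\partial v = 0$ for every $i$. Every $n\times n$ minor of $M$ that uses the last column therefore vanishes, and the only surviving $n\times n$ minor is $\det A$, the Alexander matrix of $G_K$. But as noted in Section \ref{section4}, the fundamental identity of Fox derivatives forces one column of $A$ to be a $\ZZ[t^{\pm1}]$-linear combination of the others after abelianization, so $\det A = 0$. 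Hence $\bar\cE_0 = (0)$ and $\wbar H_K(t,v) = 0$.

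For the mod $p$ case I would work over the quotient $\RG_K/\langle v^p\rangle \cong G_K * \ZZ/p$ and specialize $v=\zeta$ for a nontrivial $p$-th root of unity. The point is that setting $v^p=1$ corresponds, at the level of the abelianized Alexander module, to the ring map $\ZZ[t^{\pm1},v^{\pm1}] \to \ZZ[\zeta_p][t^{\pm1}]$ sending $v\mapsto\zeta$. Under the isomorphism with $G_K*\ZZ/p$, the relators again become words in the $a_j$ together with the single relator $v^p$; the meridional relators still do not involve $v$, so their $v$-Fox-derivatives vanish, and $\partial(v^p)/\partial v = 1+v+\cdots+v^{p-1}$ evaluates to $0$ at $v=\zeta$ since $\zeta$ is a nontrivial $p$-th root of unity. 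Thus the relevant column of the specialized matrix is zero, and the same column-dependence argument for the block coming from $A$ shows every maximal minor vanishes after setting $v=\zeta$. Therefore $\wbar H_K(t,\zeta)=0$.

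The main obstacle is the bookkeeping in passing from the abstract isomorphism of Theorem \ref{AC-Augmented-Theorem} to a concrete presentation whose Fox Jacobian visibly has the claimed block form: the isomorphism is induced by the change of variables $a' = v^{\la_a} a\, v^{-\la_a}$, and one must verify that the conjugating powers of $v$ do not reintroduce $v$-dependence into the $v$-derivatives of the relators in a way that survives abelianization. The cleanest route is to apply the chain rule for Fox derivatives (as in Theorem \ref{GKpolyrelatedtoHbarpoly}) to the change of variables and check that the contribution of each conjugating factor $v^{\pm\la}$ to $\partial r_i/\partial v$ telescopes to zero modulo the relators, or equivalently that it is annihilated upon specializing $v=\zeta$; this is where a small but genuine computation is needed, though it is essentially the same manipulation already carried out in the proof of Theorem \ref{AC-Augmented-Theorem}.
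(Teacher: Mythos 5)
Your proposal is correct and follows essentially the same route as the paper's proof: both invoke Theorem \ref{AC-Augmented-Theorem} to replace the Wirtinger presentation of $\RG_K/\langle v^p\rangle$ (resp.\ of $\RG_K$ itself when $p=0$) by the free-product presentation, observe that the Fox Jacobian then splits into a $G_K$-block with $\det A=0$ by the fundamental identity plus a $v$-column and $v^p$-row that are killed because $1+\zeta+\cdots+\zeta^{p-1}=0$, and use presentation-invariance of the elementary ideals of the quotient group to transfer this vanishing to $\wbar{H}_K$. The only differences are cosmetic: the paper records the conclusion as divisibility of $\wbar{H}_K(t,v)$ by $\sum_{k=0}^{p-1}v^k$ rather than specializing at $v=\zeta$ from the outset, and the ``small but genuine computation'' you flag at the end is actually unnecessary, since the elementary ideals depend only on the group together with its abelianization (which the change of variables $a'=v^{\la_a}av^{-\la_a}$ visibly preserves), not on which presentation is differentiated.
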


\begin{remark}
The same result holds for twisted virtual Alexander polynomials $\wbar{H}^\varrho(t,v)$; if $K$ is a mod $p$ almost classical knot or link and $\varrho \colon \RG_K \to GL_n(R)$, then 
$\wbar{H}^\varrho(t,\zeta) = 0$ for any non-trivial $p$-th root of unity $\zeta$. If $K$ is almost classical, then $\wbar{H}^\varrho_K(t, v)=0.$
\end{remark}

\begin{proof}
Let $\RG_K = \langle a_1, \ldots, a_n, v \mid r_1, \ldots, r_n \rangle$. Then the $n \times (n+1)$ presentation matrix for the Alexander module of $\RG_K$ is given by 
$$
\begin{bmatrix}
A & \left(\frac{\partial r}{\partial v}\right)
\end{bmatrix},
$$
where $A= \left(\left. \frac{\partial r_i}{\partial a_j}\right|_{a_1,\ldots,a_n=t} \right)$ is the $n \times n$ meridional Alexander matrix and $\left(\frac{\partial r}{\partial v}\right)$ is the column vector whose $i$-th entry is $\left. \frac{\partial r_i}{\partial v_{}}\right|_{a_1,\ldots,a_n=t}$.
By the fundamental identity of Fox derivatives, it follows that $\wbar{H}_K(t,v) = \det A$.  

The result will be established by relating $\wbar{H}_K(t,v)$ to the first elementary ideal associated to the quotient group  $\RG_K / \langle v^p \rangle = \langle a_1, \ldots, a_n, v \mid r_1, \ldots, r_n, v^p  \rangle$. 
Its Alexander module has $(n+1)\times (n+1)$ presentation matrix given by  
$$
M=
\begin{bmatrix}
\,A & \left(\frac{\partial r}{\partial v}\right) \\[0.5em]
0 & \sum_{k=0}^{p-1} v^{k} 
\end{bmatrix}.
$$
Its first elementary ideal is the ideal generated by all the $n \times n$ minors of $M$, and in particular its Alexander polynomial
$\De^1(t,v)$ is given by the $\gcd$ of all these minors. Since $\det A$ is among the minors, it follows that $\De^1(t,v)$ divides $\det A = \wbar{H}_K(t,v)$.

If $K$ is mod $p$ almost classical, then by Theorem \ref{AC-Augmented-Theorem},
the quotient group admits a presentation of the form 
$$\RG_K / \langle v^p \rangle \cong G_K * \ZZ/p = \langle a_1, \ldots, a_n, v \mid r_1, \ldots, r_n, v^p \rangle,$$
where the relations $r_1, \ldots, r_n$ are independent of $v.$
The associated presentation matrix for the Alexander module is then given by  
$$
M'=
\begin{bmatrix}
\, A & 0 \\[0.3em]
0 & \sum_{k=0}^{p-1} v^{k} 
\end{bmatrix},
$$
where $A$ is now the Alexander matrix for $G_K$. The fundamental identity of Fox derivatives implies that $\det A =0$. Notice that the term $\sum_{k=0}^{p-1} v^{k}$ appears in each of the other $n\times n$ minors of $M',$ and this shows that $\sum_{k=0}^{p-1} v^{k}$ divides $\De^1(t,v)$, so it must also divide $\wbar{H}_K(t,v)$.  We conclude that  $\wbar{H}_K(t, \zeta) = 0$ if $\zeta$ is a non-trivial $p$-th root of unity. 
\end{proof}

Combining Theorem \ref{GKpolyrelatedtoHbarpoly}  and Corollary \ref{vanishatzeta} yields the following divisibility property of the generalized Alexander polynomial $G_K(s,t)$.

\begin{proposition}
If $K$ is a mod $p$ almost classical knot or link then the $p$-th cyclotomic polynomial $\Phi_p(s)$ divides $G_K(s,t)$.\qed
\end{proposition}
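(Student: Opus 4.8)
The plan is to combine the two results cited just before the Proposition: the formula
$$
G_K(s,t) = \wbar{H}_K(st, s)
$$
from Theorem \ref{GKpolyrelatedtoHbarpoly}, and the divisibility statement of Corollary \ref{vanishatzeta}, which tells us that when $K$ is mod $p$ almost classical the factor $\sum_{k=0}^{p-1} v^k$ divides $\wbar{H}_K(t,v)$ in $\ZZ[t^{\pm 1}, v^{\pm 1}]$. Since $\sum_{k=0}^{p-1} v^k$ is (up to the unit conventions governing these Laurent polynomials) the $p$-th cyclotomic polynomial $\Phi_p(v)$ whenever $p$ is prime, the strategy is simply to substitute $v = s$ and $t \mapsto st$ and track what happens to this distinguished factor.

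First I would write $\wbar{H}_K(t,v) = \left(\sum_{k=0}^{p-1} v^k\right) Q(t,v)$ for some $Q \in \ZZ[t^{\pm 1}, v^{\pm 1}]$, which is exactly the conclusion of Corollary \ref{vanishatzeta}. Next I would apply the substitution dictated by Theorem \ref{GKpolyrelatedtoHbarpoly}, namely replace $t$ by $st$ and $v$ by $s$, yielding
$$
G_K(s,t) = \wbar{H}_K(st, s) = \left(\sum_{k=0}^{p-1} s^k\right) Q(st, s).
$$
Since $Q(st,s)$ still lies in $\ZZ[s^{\pm 1}, t^{\pm 1}]$, this exhibits $\sum_{k=0}^{p-1} s^k$ as a factor of $G_K(s,t)$, and this factor is precisely $\Phi_p(s)$ for $p$ prime. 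This gives the claimed divisibility.

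The one point that requires genuine care, and what I expect to be the main obstacle, is the identification of $\sum_{k=0}^{p-1} v^k$ with the cyclotomic polynomial $\Phi_p$ and the bookkeeping of units. For $p$ prime one has the clean identity $\Phi_p(v) = 1 + v + \cdots + v^{p-1}$, but for composite $p$ the polynomial $\sum_{k=0}^{p-1} v^k = (v^p-1)/(v-1)$ factors as the product of $\Phi_d(v)$ over all divisors $d$ of $p$ with $d \neq 1$, so $\Phi_p$ is only one factor of it. Thus the argument as stated delivers divisibility by $\Phi_p(s)$ immediately in the prime case, which is the case of primary interest (and is consistent with the standing assumption $p \neq 1$); for composite $p$ the same substitution in fact yields the stronger conclusion that $\sum_{k=0}^{p-1} s^k$ divides $G_K(s,t)$, from which divisibility by the single cyclotomic factor $\Phi_p(s)$ follows a fortiori. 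I would also note that these polynomials are only defined up to units $\pm s^i t^j$, so strictly speaking the divisibility is an assertion about associate classes in $\ZZ[s^{\pm 1}, t^{\pm 1}]$; since $\Phi_p(s)$ is not a unit, this causes no difficulty.
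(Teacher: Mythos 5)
Your proposal is correct and follows essentially the same route as the paper, which states the Proposition with no separate proof precisely because it is the immediate combination of Theorem \ref{GKpolyrelatedtoHbarpoly} and Corollary \ref{vanishatzeta} that you carry out. The only difference is cosmetic: you invoke the divisibility of $\wbar{H}_K(t,v)$ by $\sum_{k=0}^{p-1} v^{k}$ established inside the proof of Corollary \ref{vanishatzeta} (rather than the stated vanishing at non-trivial $p$-th roots of unity), and your bookkeeping for composite $p$ and for units is sound.
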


For example, consider the virtual knots $4.91$ and $4.92$ from the virtual knot table of Green \cite{green}.
The virtual knot $K=4.91$ has generalized Alexander polynomial 
$$G_K(s,t) = (s - 1)(t - 1)(st - 1)(s^2 t^2 + s t^2 + s^2 t + t^2 + 2st + s^2 + t + s + 1).$$
Since $G_K(s,t)$ is not divisible by any cyclotomic polynomial in $s$, we conclude that $K=4.91$ is not mod p almost classical for any $p$.

On the other hand, the virtual knot $K=4.92$ has generalized Alexander polynomial 
$$G_K(s,t) = (s - 1)(t - 1)(st - 1)(s^2 + s + 1)(t^2 + t + 1),$$ which is divisible by $s^2+s+1$. One can easily check that $K=4.92$ is mod 3 Alexander numberable.


\section{Seifert surfaces for almost classical knots}\label{section6} 

In this section, we give a construction of the Seifert surface $F$ associated to an almost classical knot or link $K$. Our construction is modelled on Seifert's algorithm. In the following section, we will use the Seifert surface to show that the first elementary ideal $\cE_1$ is principal.

\begin{theorem} \label{thm-TFAE}
For a virtual knot or link $K$, the following are equivalent.
\begin{itemize}
\item[(a)] $K$ is almost classical, i.e., some virtual knot diagram for $K$ admits an Alexander numbering.
\item[(b)] $K$ is homologically trivial as a knot or link in $\Si \times [0,1]$, where $\Si$ is the Carter surface associated to an Alexander numberable diagram of $K$.
\item[(c)] $K$ is the boundary of a connected, oriented surface $F$ embedded in $\Si \times [0,1]$, where $\Si$ is again the Carter surface associated to an Alexander numberable diagram of $K$. 
\end{itemize}
The surface $F$ from part (c) is called a \emph{Seifert surface} for $K$. 
\end{theorem}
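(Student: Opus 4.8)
The plan is to prove the cycle (a) $\Rightarrow$ (c) $\Rightarrow$ (b) $\Rightarrow$ (a), with the construction of the surface $F$ in (a) $\Rightarrow$ (c) carrying essentially all of the content. Two of the three implications are quick. For (c) $\Rightarrow$ (b): if $K=\partial F$ for an oriented surface $F$ embedded in $\Si\times[0,1]$, then $[K]=[\partial F]=0$ in $H_1(\Si\times[0,1];\ZZ)\cong H_1(\Si;\ZZ)$, since the projection $\Si\times[0,1]\to\Si$ is a homotopy equivalence; this is exactly (b). For (b) $\Rightarrow$ (a): the statement of (b) already presupposes the existence of an Alexander numberable diagram of $K$, since one is needed to define $\Si$, so (a) holds; alternatively, (a) $\Leftrightarrow$ (b) is just the $p=0$ case of the homological characterization of mod $p$ Alexander numberability established in the discussion preceding the theorem, with $\ZZ=\ZZ/0$.

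For the main implication (a) $\Rightarrow$ (c), I would adapt Seifert's algorithm to the thickened surface, using the Alexander numbering to dictate the heights of the sheets. Fix an Alexander numberable diagram of $K$, let $\Si$ be its Carter surface, and regard $K$ as a knot in $\Si\times\{1/2\}$ whose projection to $\Si$ is an immersed curve with transverse double points at the classical crossings. Let $\la$ be an Alexander numbering of the regions $D_1,\ldots,D_n$ (the $2$-cells of $\Si\sm K$); after adding a global constant — which preserves the defining relations — I may assume $\la_{D_i}\ge 0$ for every region. Recall from the discussion before the theorem that $\partial\bigl(\sum_i \la_{D_i}\,D_i\bigr)=K$ over $\ZZ$, because across each edge of $K$ the region on the right carries an Alexander number exactly one larger than the region on the left. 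The idea is to promote this $2$-chain to an embedded surface: over each region $R$, place $\la_R$ parallel horizontal copies of $R$, the $k$-th copy at a fixed height $h_k$ with $0<h_1<h_2<\cdots$ chosen independently of $R$, each oriented by the orientation of $\Si$. Across an edge $e$ separating regions $R$ (on the left) and $R'$ (on the right, with $\la_{R'}=\la_R+1$), the common sheets at heights $h_1,\ldots,h_{\la_R}$ glue continuously, while the single top sheet over $R'$ is free and I let it terminate along $K$. Summing over all edges, the free boundary created in this way is precisely $K$, so $\partial F=K$; and since every sheet carries the orientation of $\Si$, the gluings are orientation-coherent and $F$ is oriented.

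It remains to specify the construction in a neighborhood of each crossing and to verify that the pieces assemble into an embedded surface; this is where I expect the real work to lie. Near a classical crossing the four incident regions carry Alexander numbers determined by the relations in Figure~\ref{Alexander-Numbering}, and I would use these numbers together with the over/under data to route the interweaving sheets so that the sheets belonging to the over-strand lie at greater heights than those of the under-strand; the resulting free boundary must reconstruct $K$ with its given over/under, not merely its projection. The local model is exactly the half-twisted band of Seifert's algorithm, now realized in the $[0,1]$-direction, and the main obstacle is checking that this local model is embedded and orientation-coherent and that it matches the sheet counts prescribed by $\la$ on each of the four adjoining regions. Crucially, because we build $F$ out of regions rather than by capping Seifert circles, we never need the Seifert circles to bound disks in $\Si$ — a condition that can genuinely fail on a surface of positive genus — so the Alexander numbering is exactly the hypothesis that makes the sheet counts on adjacent regions differ by one and hence lets the construction close up with boundary $K$. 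Finally, if the resulting $F$ is disconnected, I would tube its components together by attaching $1$-handles in $\Si\times[0,1]$ disjoint from $K$, which preserves orientability, embeddedness, and the boundary, yielding the connected, oriented Seifert surface required by (c).
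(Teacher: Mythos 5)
Your cycle (a) $\Rightarrow$ (c) $\Rightarrow$ (b) $\Rightarrow$ (a) is correctly organized: (c) $\Rightarrow$ (b) is immediate as you say, and (b) $\Rightarrow$ (a) does follow either from the phrasing of (b) or from the Section \ref{section5} discussion, which is exactly how the paper disposes of (a) $\Leftrightarrow$ (b). Your route to the main implication, however, is genuinely different from the paper's. The paper proves (b) $\Rightarrow$ (c) by smoothing all classical crossings to obtain disjoint oriented Seifert circuits $\ga_1, \ldots, \ga_n$ in $\Si$, proving separately (Proposition \ref{prop-subsurf}, by induction on $n$ with a surgery argument on thin rectangles) that any null-homologous family of disjoint curves bounds a collection of oriented subsurfaces of $\Si$, and then stacking those subsurfaces at distinct levels of $\Si \times [0,1]$ and joining them by half-twisted bands at the crossings. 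You instead build the surface directly from the Alexander numbering, layering $\la_R$ parallel copies of each region $R$ and letting top sheets terminate on $K$ --- in effect realizing the $2$-chain $\sum_i \la_{D_i} D_i$ as an embedded surface. The two constructions are closely related: your $k$-th layer is precisely a push-off of the superlevel set $\{\la \geq k\}$, whose boundary, once the corners at crossings are smoothed, is a union of Seifert circuits; so your construction amounts to a direct, non-inductive proof of exactly the special case of Proposition \ref{prop-subsurf} that the theorem needs. The paper's route buys a general statement about arbitrary null-homologous curve systems; yours buys explicitness and avoids the induction entirely.

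The one place your proposal falls short of a proof is the place you flag yourself: the local model at a classical crossing is described but never verified. This is the crux of your construction. One must check that near a crossing, where the four quadrants carry $m$, $m+1$, $m+1$, $m+2$ sheets, the extra sheets and the half-twisted band assemble into an embedded, coherently oriented surface whose free boundary is the two strands with the correct over/under data, and that the band can be inserted at heights avoiding both the knot and the horizontal sheets passing beneath it; your blanket claim that ``the gluings are orientation-coherent'' is only clear for the edge-gluings away from crossings. This local model is the standard Seifert-algorithm picture transplanted into the $[0,1]$-direction and it does work, but as written it is an assertion rather than an argument, whereas the paper's corresponding crux (Proposition \ref{prop-subsurf}) is proven in full. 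To complete your version you would need to exhibit the crossing model explicitly (a picture with an orientation check suffices) and note that it is compatible with the prescribed sheet counts on all four quadrants; your final tubing argument for connectedness is fine and matches what the paper leaves implicit.
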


The argument that (a) $\Leftrightarrow$ (b) was given in Section \ref{section5}. If  $K = \partial F$ for an oriented surface $F \hookrightarrow \Si \times [0,1]$, then $K$ is necessarily homologically trivial. Thus (c) $\Rightarrow$ (b).
On the other hand, it is well known that an oriented, null homologous link in an oriented 3-manifold admits a Seifert surface (see \cite[Lemma 2.2]{CT} or \cite[Proposition 27.5]{Ranicki}). 
This section is devoted to providing an explicit algorithm for constructing the Seifert surface of an almost classical knot or link, and this will show that (b) $\Rightarrow$ (c) and complete the proof of the theorem.

We begin by recalling Seifert's algorithm. If $K$ is a classical knot, after smoothing all the crossings, we obtain a disjoint collection of oriented simple closed curves called Seifert circuits. Each Seifert circuit bounds a 2-disk in the plane, and the resulting system of 2-disks may be nested in $\RR^2$, but we can put them at different levels in $\RR^3$ and thus arrange them to be disjoint. 
The Seifert surface $F$ is then built from the disjoint union $D_1 \cup \cdots \cup D_n$ of Seifert disks by attaching half-twisted bands at each crossing.

A slight modification of this procedure enables us to construct Seifert surfaces for homologically trivial knots $K$ in $\Si \times [0,1]$. 
First smooth the crossings of $K$ to obtain a collection $\ga_1, \ldots, \ga_n$ of oriented disjoint simple closed curves in $\Si \times [0,1]$. Since $K$ is homologically trivial, it follows that $\sum_i [\ga_i] =0$ in $H_1(\Si; \ZZ)$.

\begin{proposition} \label{prop-subsurf}
If $\ga_1, \ldots, \ga_n$ are disjoint oriented simple closed curves in an oriented surface $\Si$ whose union is homologically trivial, then there exists a collection $S_1, \ldots, S_m$ of connected oriented subsurfaces of $\Si$ such that $$\bigcup_{j=1}^m \partial S_j = \bigcup_{i=1}^n \ga_i.$$
The orientation of each $S_j$ may or may not agree with the orientation of $\Si$, and the boundary orientation on $\partial S_j$ is the one induced by the outward normal first convention. 
\end{proposition}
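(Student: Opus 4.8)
The plan is to reduce the statement to a combinatorial labeling problem on the regions cut out by the curves, solve that problem using the homological hypothesis, and then reassemble the subsurfaces as level sets. First I would cut $\Si$ along $\Ga = \ga_1 \cup \cdots \cup \ga_n$ and consider the finitely many connected components $R_1, \ldots, R_k$ of $\Si \sm \Ga$, which I call \emph{regions}. Since $\Si$ is orientable each $\ga_i$ is two-sided, so it has a well-defined region on its left and one on its right, determined by the orientations of $\ga_i$ and $\Si$; call them $R_{L(i)}$ and $R_{R(i)}$. With each region oriented as $\Si$, the outward-normal-first convention gives $\partial \bar R_\al \ni +\ga_i$ when $R_\al = R_{L(i)}$ and $\partial \bar R_\al \ni -\ga_i$ when $R_\al = R_{R(i)}$. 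Consequently, if I can find integers $n_1, \ldots, n_k$, one per region, satisfying
\[
 n_{L(i)} - n_{R(i)} = 1 \qquad \text{for every } i = 1, \ldots, n,
\]
then the $2$-chain $\sum_\al n_\al \bar R_\al$ has boundary exactly $\sum_i \ga_i$, and the whole problem is reduced to producing such a labeling.

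The heart of the argument — and the step where homological triviality is essential — is the existence of these integers, which I expect to be the main obstacle. I would encode the requirement as a potential problem on the dual graph $D$ whose vertices are the regions and which has one edge, directed from $R_{R(i)}$ to $R_{L(i)}$, for each curve $\ga_i$; the labeling $n$ is then a vertex potential with prescribed difference $+1$ across every edge. Such a potential exists if and only if the signed sum of the prescribed differences vanishes around every cycle of $D$. Realizing a cycle of $D$ as an actual loop $\ell$ in $\Si$, this signed sum equals the algebraic intersection number of $\ell$ with $\Ga$, i.e.\ the value $[\ell] \cdot [\Ga]$ of the intersection pairing on $H_1(\Si)$. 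Since $\bigcup_i \ga_i$ is homologically trivial we have $[\Ga] = \sum_i [\ga_i] = 0$, so every such sum vanishes and the potential exists. (Equivalently one may obtain the $n_\al$ from the connecting homomorphism $\partial \colon H_2(\Si, \Ga) \to H_1(\Ga)$, whose image contains $\sum_i [\ga_i]$ precisely because this class dies in $H_1(\Si)$.) This also shows, \emph{a posteriori}, that no $\ga_i$ can have the same region on both sides, since that would force the contradictory equation $0 = 1$.

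Finally, with the integers $n_\al$ in hand, I would assemble the subsurfaces as super-level sets. For each integer $\ell$ set $T_\ell = \bigcup \{\, \bar R_\al : n_\al \geq \ell \,\}$, a compact subsurface of $\Si$ with $\partial T_\ell \subseteq \Ga$, oriented as $\Si$; only finitely many $T_\ell$ have nonempty boundary. Because $n_{L(i)} = n_{R(i)} + 1$, a given curve $\ga_i$ lies in $\partial T_\ell$ for exactly one value of $\ell$, namely $\ell = n_{L(i)}$, and there $T_\ell$ lies locally on the left of $\ga_i$, so the induced boundary orientation agrees with the given orientation of $\ga_i$. Taking $S_1, \ldots, S_m$ to be the connected components of the $T_\ell$ with nonempty boundary then yields connected oriented subsurfaces with $\bigcup_j \partial S_j = \bigcup_i \ga_i$, completing the construction. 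The only remaining points are routine: checking that each $\ga_i$ is either entirely in or entirely out of each $\partial T_\ell$ (immediate, since the left and right regions are constant along the circle $\ga_i$) and confirming the boundary-orientation convention stated above.
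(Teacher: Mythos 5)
Your proof is correct, and it takes a genuinely different route from the paper's. The paper argues by induction on the number of curves: a single null-homologous simple closed curve must separate; in the inductive step one either splits off a separating $\ga_i$, or locates two non-separating curves that are consistently oriented and joined by an embedded arc (consistency being checked via an Alexander-numbering argument), bands them together into a single null-homologous curve $\ga'$, applies the inductive hypothesis, and then surgers the thickened arc $R$ into or out of the resulting subsurface. You instead solve the problem globally and without induction: your labels $n_\al$ are precisely an Alexander numbering of the complementary regions, their existence is reduced to a cycle condition on the dual graph, and that condition holds because the relevant signed sums are intersection numbers with $[\Ga]=0$; the subsurfaces are then the super-level sets $T_\ell$. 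Your route buys a cleaner handling of orientations --- all $S_j$ come oriented compatibly with $\Si$, and the $T_\ell$ are nested, $T_{\ell+1}\subseteq T_\ell$ --- and it is the natural two-dimensional analogue of building a Seifert surface as the preimage of a regular value; it also meshes with the paper's own Section 5 argument, where an Alexander numbering of a diagram exhibits the 2-disks of the Carter surface as a 2-chain bounding $K$. What the paper's induction buys is a more elementary, explicitly surgical construction in the style of Seifert's algorithm, matching how the Seifert surface is subsequently assembled. The points you defer as routine (that no curve has the same region on both sides, that each $T_\ell$ is a compact subsurface, and that closed components may be discarded) are indeed routine here, since in the application $\Si$ is a closed, connected Carter surface.
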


We can assume that $S_j$ has nonempty boundary $\partial S_j$ for each $j$, though we do not assume that $\partial S_j$ is connected. The subsurfaces may have nonempty intersection $S_i \cap S_j$, but their boundaries $\partial S_1, \ldots, \partial S_m$ are necessarily disjoint (since $\ga_1, \ldots, \ga_n$ are).

The subsurfaces $S_j$ are the building blocks for the Seifert surface and in fact, given Proposition \ref{prop-subsurf} one can easily construct the Seifert surface $F$ for $K$ as follows. By placing $S_1, \ldots,  S_m$ at different levels in the thickened surface $\Si \times [0,1]$, we can arrange them to be disjoint. We then glue in small half-twisted bands at each crossing of $K$, and the result is an oriented surface $F$ embedded in $\Si \times [0,1]$ with $\partial F = K$.  

We now complete the argument by proving the proposition.

\begin{proof}
The proof is by induction on $n,$ and the base case $n=1$ amounts to the observation that a single curve $\ga_1$ is homologically trivial if and only if it is a separating curve on $\Si$. In fact, the Alexander numbering of $\ga_1$ induces an Alexander numbering of $\Si \sm \ga_1$ with the region to the left of $\ga_1$ having number $\la_1 - 1$ and the region to the right having number $\la_1$. Thus $\Si \sm \ga_1$ is disconnected, and one can take $S_1$ to be either of the oriented subsurfaces of $\Si$ bounding $\ga_1$. 
\begin{figure}[ht]
\centering
\includegraphics[scale=1.10]{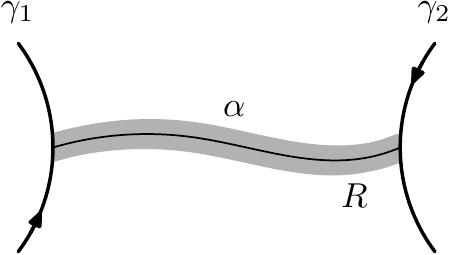}
\caption{Two curves $\ga_1$ and $\ga_2$, and a path $\al$ connecting them. The thin rectangle $R$ is the shaded region, and in this figure the orientations of $\ga_1$ and $\ga_2$ are consistent with that of $R$.}
\label{curves1}
\end{figure}

Now suppose $n=2.$ If $\ga_1$ is separating, then we again have  a subsurface $S_1$ with $\partial S_1 = \ga_1$, and we are reduced to the previous case. If instead $\ga_1$ and $\ga_2$ are both non-separating, we choose a path $\al \colon [0,1] \to \Si$ from $\ga_1$ to $\ga_2$. We can assume that, apart from its endpoints, the path $\alpha$ does not intersect $\ga_1$ or $\ga_2.$ Thicken $\al$ slightly to get a long thin rectangle $R$ embedded in $\Si$ with short sides along $\ga_1$ and $\ga_2.$ 
Orient $R$ so that its boundary orientation is consistent with $\ga_1$ on the short side along $\ga_1$. Then we claim that the orientation of $R$ on the other short side along $\ga_2$ is consistent with $\ga_2.$ We prove this by contradiction using the Alexander numbering. If the orientation is not consistent, then the regions of $\Si \sm (\ga_1 \cup \ga_2)$ would require three different Alexander numbers, implying that   $\Si \sm(\ga_1 \cup \ga_2)$ has at least three components, which contradicts the assumption that $\ga_1$ is non-separating.

\begin{figure}[ht]
\centering
\includegraphics[scale=1.10]{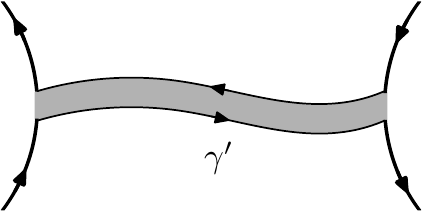}
\caption{The curve $\ga'$ is homologous to $\ga_1 \cup \ga_2$.}
\label{curves2}
\end{figure}

Thus, the orientations of $\ga_1$ and $\ga_2$ are both consistent with the rectangle $R$, and we replace $\ga_1 \cup \ga_2$ with a single curve $\ga'$ homologous to $\ga_1 \cup \ga_2$. Since $\ga'$ is null homologous, it is separating and that gives a subsurface $S$ of $\Si$ with $\partial S = \ga'$. We now perform surgery on $S$ to obtain the desired  subsurface $S'$ with $\partial S' = \ga_1 \cup \ga_2.$ Before describing the surgery operation, notice that due to boundary considerations, either $R$ is contained entirely in $S$ or they intersect only along the (long) edges of $R$.
The surgery operation amounts to removing $R$ from $S$ in the first case, or pasting $R$ into $S$ in the second. In the boundary, this amounts to replacing the long edges of $R$ with the short edges, and thus $\partial S' = \ga_1 \cup \ga_2.$

The inductive step follows by a similar argument. If any one of $\ga_1, \ldots, \ga_n$ is separating, then we have a subsurface $S_1$ bounding it and we can reduce to the case of $n-1$ null-homologous curves. On the other hand, if each $\ga_i$ is non-separating, then we claim we can find a pair of them, say $\ga_1$ and $\ga_2$, together with an arc $\al$ connecting them so that, the long thin rectangle $R$ we get from thickening $\al$ can be oriented consistently with $\ga_1$ and $\ga_2.$ Once that is established, we replace $\ga_1 \cup \ga_2$ by $\ga'$ as before and apply induction to the $n-1$ curves $\ga',\ga_3, \ldots, \ga_n$. 

It remains to be seen that the path $\al$ can be chosen so the orientations of $\ga_1$ and $\ga_2$ are consistent with the rectangle $R$ formed by thickening $\al$. To that end, choose a basepoint $p \in \ga_1$ and push-offs $p_+$ and $p_-$ in $\Si \sm \ga_1$ to the right and to the left of $p.$ Since $\ga_1$ is non-separating, we have a path $\al$ in $\Si \sm \ga_1$ connecting $p_+$ to $p_-$. Notice that the Alexander numbers of $p_+$ and $p_-$ are not equal. Notice further that the Alexander number of any point on $\al$ is constant except when $\al$ crosses one of the other curves $\ga_2, \ldots, \ga_n.$ We can assume $\al$ intersects each $\ga_i$ transversely. We can also assume, by relabeling, that the first curve $\al$ intersects is $\ga_2.$ If $\ga_1$ and $\ga_2$ are oriented consistently, then we can replace the pair $\ga_1 \cup \ga_2$ by $\ga'$ as before and apply induction to $\ga', \ga_3, \ldots, \ga_n$. Otherwise, we notice that the Alexander number of $\al$ must increase as it crosses $\ga_2.$  Continue in this manner, since the Alexander number of $p_-$ is one less than that of $p_+,$ eventually we must encounter a curve $\ga_i$ where the Alexander number of $\al$ decreases. One can verify that this curve and the previous one are consistently oriented, and using the segment of $\al$ connecting them one can form $\ga'$, reduce to the case of $n-1$ curves and apply induction. This produces a collection of subsurfaces whose boundary is the new set of curves, and the subsurfaces can again be modified by performing surgery, i.e. by either adding or removing the rectangle $R$, and this gives a collection of subsurfaces with boundary $\ga_1 \cup \cdots \cup \ga_n$.
\end{proof}

\begin{definition} The \emph{Seifert genus} of an almost classical knot or link $K$, denoted $g_s(K)$, is defined to be the minimum genus over all connected, oriented surfaces $F \hookrightarrow \Si \times [0,1]$ with $K = \partial F$ and over all Alexander numberable diagrams for $K$. 
\end{definition}

The next result shows that the genus of a Seifert surface is monotonically non-increasing under destabilization. 

\begin{theorem} \label{thm-genus}
Let $K$ be homologically trivial knot in a thickened surface $\Si \times I$ with Seifert surface $F$. 
Suppose $A$ is a vertical annulus in $\Si\times I$ disjoint from $K$, and let $\Si'$ be the surface obtained from $\Si$ by destabilization along $A$. (If destabilization along $A$ separates $\Si$, we choose $\Si'$ to be the component containing $K$.) 
Then there exists a Seifert surface $F'$ in $\Si' \times I$ for $K$ with $g(F') \leq g(F).$  
\end{theorem}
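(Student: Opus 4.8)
The plan is to put $F$ in general position relative to the vertical annulus $A$, simplify the intersection $F\cap A$ until its components can be removed by compressions that realize the destabilization, and control the genus at each stage. Write $A=c\times I$ for the core curve $c\subset\Si$ along which we destabilize. First I would isotope $F$ into the interior $\Si\times(0,1)$ and make it transverse to $A$. Since $\partial F=K$ is disjoint from $A$ and $F$ misses $\Si\times\{0,1\}\supset\partial A$, the intersection $F\cap A$ is then a disjoint union of embedded circles, each of which is either \emph{inessential} (bounding a disk in $A$) or \emph{essential} (isotopic in $A$ to the core $c$).

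Next I would eliminate the inessential circles by an innermost-disk argument. An innermost inessential component $\delta\subset F\cap A$ bounds a disk $D\subset A$ whose interior is disjoint from $F$; compressing $F$ along $D$ (cut $F$ along $\delta$ and cap the two resulting boundary circles with copies of $D$ pushed off to either side of $A$) removes $\delta$ without creating new intersection curves and without disturbing $\partial F=K$. Each such surgery either lowers the genus by one, when $\delta$ is non-separating on $F$, or splits off a closed piece that may be discarded; in either case the genus does not increase. Iterating, I may assume every remaining component of $F\cap A$ is essential, hence parallel to $c$, and after an isotopy of $A$ I can take them to be $c\times\{t_1\},\dots,c\times\{t_m\}$ at distinct heights.

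I would then realize the destabilization at the level of the surface. Present $\Si'\times I$ as $\Si\times I$ cut along $A$ and capped by two solid cylinders $D^{+}\times I$ and $D^{-}\times I$ along the two copies $c^{\pm}$ of $c$. Cutting $F$ along the essential circles yields a surface $\wh F$ whose only new boundary circles $\delta_i^{\pm}$ sit on $c^{\pm}\times I$ at height $t_i$; capping each $\delta_i^{\pm}$ with the meridian disk $D^{\pm}\times\{t_i\}$ produces an oriented surface in $\Si'\times I$ whose boundary is still $K$. These caps lie at distinct heights, so they are disjoint and meet $\wh F$ only along their boundaries, making the result embedded. Discarding closed components and keeping the component containing $K$ gives $F'$. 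The genus bound is then an Euler-characteristic count: each cut-and-cap increases $\chi$ by $2$ while fixing the boundary $K$, so from $\chi=2-2g-b$ the component carrying $K$ satisfies $g(F')\le g(F)$, the separating cases only shedding discarded pieces.

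The step I expect to be the main obstacle is the bookkeeping in this last paragraph: verifying that the compressions along the essential curves are precisely the local modifications effected by destabilization, so that $F'$ genuinely embeds in $\Si'\times I$, and confirming that the retained component is connected with boundary all of $K$ (for links, that no essential $\delta_i$ separates the components of $K$, which one checks using the Alexander numbering as in Proposition \ref{prop-subsurf}). By contrast, the transversality and innermost-disk reductions are routine, as is the orientation compatibility of the caps, since each $\delta_i$ is two-sided on the orientable surface $F$.
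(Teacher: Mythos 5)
Your proposal is correct and takes essentially the same route as the paper's proof: both arguments compress $F$ along the circles of $F \cap A$, working from an innermost circle, and control the genus via the separating/non-separating dichotomy (separating circles shed a closed component, non-separating ones drop the genus by one). Your write-up is simply a more detailed version of the paper's terse argument, making explicit the distinction between circles that are inessential in $A$ (compressed along disks in $A$) and core-parallel circles (capped by the meridian disks created by the destabilization), both of which the paper handles in one stroke by performing all surgeries ``in $\Si' \times I$.''
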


\begin{proof}
Given a vertical annulus $A$ for $\Si\times I$, consider its intersection
$F \cap A$ with the Seifert surface $F$. 
If $F \cap A$ is empty, then $F$ is a surface in $\Si' \times I$ and taking $F'=F$ satisfies the statement of the theorem. If on the other hand $F \cap A$ is non-empty, then it must
consist of a union of circles, possibly nested, in $A$. The idea is to perform surgery on the circles, one at a time, 
working from an innermost circle. (Innermost circles exist but are not necessarily unique.) 

Let $C$ denote such a circle. Viewing $C$ in $F$, then either it is a separating curve or a non-separating curve. If it's separating, then we can perform surgery to $F$ along $C$ in $\Si' \times I$ and 
 $F$ will split into a disjoint union $F' \cup F'',$ with
$F''$ a closed surface and with $\partial F' = K$. Clearly $F'$ is a surface in $\Si' \times I$ and $ g(F') \leq g(F)$ as claimed.

Otherwise, if the circle is non-separating in $F$, then using it to perform surgery to $F$, we obtain a Seifert surface $F'$ for $K$ in $\Si' \times I$ and $g(F') = g(F)-1.$
\end{proof}

Recall that for a virtual knot $K$, the \emph{virtual genus} of $K$ is the minimal genus over all thickened surfaces $\Si \times I$ in which $K$ can be realized. The next result is a direct consequence of Theorem \ref{thm-genus}, and 
it shows that a Seifert surface of minimal genus for an almost classical knot $K$ can always be constructed in the thickened surface $\Si \times I$ of minimal genus.  
\begin{corollary}
If $K$ is an almost classical knot with virtual genus $h$, then there exists a Seifert surface $F$ of minimal genus in the thickened surface $\Si \times I$ of genus $g(\Si) = h.$
\end{corollary}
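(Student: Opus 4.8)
The plan is to begin with \emph{any} Seifert surface that already realizes the Seifert genus and then to destabilize the ambient surface down to the minimal genus $h$, using Theorem \ref{thm-genus} at every step to keep the genus of the Seifert surface under control. First I would choose a Seifert surface $F$ with $g(F) = g_s(K)$, living in $\Si \times I$ for some Carter surface $\Si$; such an $F$ exists by the definition of $g_s(K)$. Since the virtual genus $h$ is by definition the minimum genus over all thickened surfaces in which $K$ can be realized, and since $K = \partial F \subset \Si \times I$, we automatically have $g(\Si) \geq h$. The argument then proceeds by downward induction on the nonnegative integer $g(\Si) - h$, the base case $g(\Si) = h$ being exactly the desired conclusion.

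For the inductive step, suppose $g(\Si) > h$. Then the representative of $K$ in $\Si \times I$ is not of minimal genus, so by Kuperberg's theorem \cite{Kuperberg} -- which asserts that each stable equivalence class contains a \emph{unique} irreducible representative, whose genus equals $h$ -- the surface $\Si$ cannot be irreducible. Hence there exists a vertical annulus $A$ in $\Si \times I$, disjoint from $K$, along which $K$ destabilizes to a representative in $\Si' \times I$ with $g(\Si') < g(\Si)$; if the destabilizing curve happens to separate $\Si$, we take $\Si'$ to be the component containing $K$, exactly as in the hypotheses of Theorem \ref{thm-genus}. Applying Theorem \ref{thm-genus} to $F$ and this annulus $A$ yields a Seifert surface $F'$ in $\Si' \times I$ with $g(F') \leq g(F) = g_s(K)$. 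By the minimality of the Seifert genus we must in fact have $g(F') = g_s(K)$, so $F'$ is again a minimal-genus Seifert surface, now embedded in a surface of strictly smaller genus. Iterating this construction, after finitely many steps we arrive at a minimal-genus Seifert surface in $\Si'' \times I$ with $g(\Si'') = h$, which is what we want.

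The step I expect to be the main obstacle is the one place where the reasoning is not purely formal: guaranteeing, whenever $g(\Si) > h$, the existence of a genus-reducing destabilization carried out along a vertical annulus \emph{disjoint} from $K$. This amounts to translating the statement ``$\Si$ is not of minimal genus'' into a concrete destabilizing annulus avoiding $K$, and it is precisely here that the uniqueness of Kuperberg's irreducible representative is essential -- without uniqueness one could a priori have an irreducible representative of genus greater than $h$, which would stall the induction. Everything else is bookkeeping: Theorem \ref{thm-genus} does the substantive work of pushing the Seifert surface through each destabilization without raising its genus (via surgery on the innermost circles of $F \cap A$), and the minimality of $g_s(K)$ then forces the genus to remain equal to $g_s(K)$ throughout.
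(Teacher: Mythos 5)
Your proof is correct and matches the paper's intent: the paper offers no separate argument, stating only that the corollary is a direct consequence of Theorem \ref{thm-genus}, and your induction---using Kuperberg's uniqueness to produce a genus-reducing destabilizing annulus disjoint from $K$ whenever $g(\Si) > h$, then invoking Theorem \ref{thm-genus} to carry the minimal-genus Seifert surface through each destabilization---is exactly the intended fleshing-out of that remark. Your identification of the uniqueness in Kuperberg's theorem as the crucial non-formal ingredient is also on point.
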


\section{Alexander polynomials for almost classical knots}\label{section7} 
 
In this section,
we extend various classical results to the almost classical setting by using the Seifert surface constructed in the previous section. Recall that for a classical knot $K$ in $S^3$, the Alexander module can be viewed as the first homology of the infinite cyclic cover $X_\infty$, which can be constructed  by cutting the complement $S^3 \sm K$ along a Seifert surface and gluing countably many copies end-to-end (cf. \cite[Ch.~6]{Lickorish}).

We begin this section by performing a similar construction for almost classical knots $K$, and we use this approach to define Seifert forms $\al^\pm$ and Seifert matrices $V^\pm$, which we show give a presentation for the first elementary ideal $\cE_1$ of the Alexander module. It follows that $\cE_1$ is principal when $K$ is an almost classical knot or link, and this recovers and extends the principality result   in \cite{Nakamura-et-al}. We then define 
the Alexander polynomial  of an almost classical knot or link $K$ to be the polynomial $\De_K(t) =\det(tV^- -V^+) \in \ZZ[t^{\pm 1}]$ with $\cE_1 = (\De_K(t))$, which is well-defined up to multiplication by $t^g$ where $g$ is the virtual genus of $K$. 
(This is an improvement, for instance Lemma \ref{lem-standard} allows us to remove the sign ambiguity that one would have if one simply defined $\De_K(t)$ as a generator of the principal ideal $\cE_1$.) 

We give a bound on the Seifert genus $g_s(K)$ in terms of $\deg \De_K(t)$ and prove that $\De_K(t)$ is multiplicative under connected sum of almost classical knots. 
We establish a skein formula for $\De_K(t)$ and extend the knot determinant to the almost classical setting by taking $\det(K) = |\De_K(-1)|$, which we show is odd for almost classical knots. We briefly explore to what extent these results can be extended to mod $p$ almost classical knots and links. 

We begin by recalling the definition of {\it linking number}, $\lk(J,K)$ for a pair $(J, K)$ of disjoint oriented knots in $\Si \times I$.

\begin{proposition} \label{MVargument}
The relative homology group $H_1(\Si \times I \sm J, \, \Si \times 1)$ is infinite cyclic generated by a meridian $\mu$ of $J$.
\end{proposition}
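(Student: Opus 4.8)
The plan is to compute the relative homology group $H_1(\Si \times I \sm J, \, \Si \times 1)$ using the long exact sequence of the pair together with a direct understanding of the homology of the knot complement $\Si \times I \sm J$. First I would observe that since $J$ is a knot (a single embedded circle) in the interior of the thickened surface $\Si \times I$, a tubular neighborhood $N(J)$ is a solid torus, and $\Si \times I \sm J$ deformation retracts onto $\Si \times I \sm \int N(J)$. The key geometric input is that $\Si \times I$ deformation retracts onto $\Si \times 1$ (or onto $\Si$), so the inclusion $\Si \times 1 \hookrightarrow \Si \times I \sm J$ should induce an isomorphism on homology \emph{except} for the contribution of the meridian $\mu$ of $J$, which is precisely what becomes a free generator in the relative group.

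The main technical step is to identify the relative term, and I would carry this out with a Mayer--Vietoris argument (as the proposition's internal label \texttt{MVargument} already suggests). Write $\Si \times I = (\Si \times I \sm J) \cup N(J)$ with intersection the boundary torus $\p N(J) \cong T^2$. The Mayer--Vietoris sequence then compares $H_*(\Si \times I \sm J)$ and $H_*(N(J)) \cong H_*(S^1)$ to $H_*(\Si \times I) \cong H_*(\Si)$. Since $N(J)$ contributes the longitude direction and $\Si \times I$ kills the meridian direction of the torus, the upshot is that $H_1(\Si \times I \sm J)$ is an extension of $H_1(\Si)$ by the infinite cyclic group generated by the meridian $\mu$; more precisely, the meridian $\mu$ bounds a disk in $N(J)$ but is nontrivial (indeed of infinite order) in the complement. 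I would then feed this into the long exact sequence
\[
H_1(\Si \times 1) \lto H_1(\Si \times I \sm J) \lto H_1(\Si \times I \sm J, \, \Si \times 1) \lto H_0(\Si \times 1) \lto H_0(\Si \times I \sm J),
\]
and use that the inclusion $\Si \times 1 \hookrightarrow \Si \times I \sm J$ hits everything in $H_1$ coming from the surface directions (both spaces retract compatibly onto $\Si$ away from $J$), so the cokernel of the first map is exactly the $\ZZ$ generated by $\mu$, while the connecting map to $\wt H_0$ vanishes since both spaces are connected.

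The step I expect to be the main obstacle is verifying that the map $H_1(\Si \times 1) \to H_1(\Si \times I \sm J)$ is \emph{surjective onto the surface part}, i.e.\ that removing $J$ does not create extra first homology beyond the single meridian class. This requires being careful that $J$ lies in the interior and that its complement genuinely retracts (up to the meridian) back to a copy of the surface; the cleanest way is to note that $\Si \times I \sm \int N(J)$ is a compact $3$-manifold whose homology can be read off from the Mayer--Vietoris computation above, confirming $H_1(\Si \times I \sm J) \cong H_1(\Si) \oplus \ZZ\langle \mu\rangle$ as long as $[J]$ is checked to be nonzero or handled via its class. Once the complement's $H_1$ is pinned down, exactness forces $H_1(\Si \times I \sm J, \, \Si \times 1) \cong \ZZ$, generated by $\mu$, completing the proof.
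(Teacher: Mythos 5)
Your proposal is correct and takes essentially the same approach as the paper: a Mayer--Vietoris argument for the decomposition $\Si \times I = (\Si \times I \sm J) \cup N(J)$ combined with the long exact sequence of the pair $(\Si \times I \sm J, \, \Si \times 1)$. The step you flag as the main obstacle is handled in the paper exactly along the lines you suggest, by noting that the composite $\Si \to \Si \times I \sm J \to \Si \times I \to \Si$ is the identity, so the inclusion of $\Si \times 1$ is split injective and the quotient is the infinite cyclic group generated by $\mu$.
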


\begin{proof}
We apply the Mayer-Vietoris sequence to the union $\Si \times I =  (\Si \times I \sm J) ~\cup~ \int(N(J)),$
where $N(J)$ is a closed regular neighborhood of $J$.
Let $j \colon \Si \to \Si \times I \sm J $ be given by $j(x) = (x,1)$,
let  $i \colon  \Si \times I \sm J \to \Si \times I$ be inclusion
and let $p \colon \Si \times I \to \Si$ be projection.
The composite
\[
\Si ~\stackrel{j}{\lto}~ \Si \times I \sm J ~\stackrel{i}{\lto}~ \Si \times I ~ \stackrel{p}{\lto} \Si
\]
is the identity and hence $i_*\colon H_*(\Si \times I \sm J) \to H_*(\Si \times I)$ is a split surjection
and $j_* \colon H_*(\Si) \to H_*(\Si \times I \sm J)$ is a split injection
and thus the Mayer-Vietoris sequence yields the short exact sequence
\[
0 \to H_1\left(\partial N(J) \right) ~\to~ H_1( \Si \times I \sm J ) \oplus H_1(J) ~\to~ H_1( \Si \times I) \to  0,
\]
moreover, $H_1\left(\partial N(J) \right) \cong \ZZ \oplus \ZZ$, generated by a meridian and longitude of $J$, and the summand generated by the longitude
maps isomorphically to $H_1(J) \cong \ZZ$.
The exact sequence of the pair  $\left(\Si \times I \sm J, \, \Si \times 1 \right)$  yields the split short exact sequence
\[
0 \to H_1\left(\Si \times 1 \right) ~\to~ H_1(\Si \times I \sm J) ~\to~ H_1(\Si \times I \sm J, \, \Si \times 1) \to  0.
\]
Combining the above two short exact sequences yields the conclusion.
\end{proof}

The knot $K$ determines a homology class $[K]$ in $H_1(\Si \times I \, \sm J, \, \Si \times 1)$
and  $\lk(J,K)$ is the unique integer $m$ such that $[K] = m \mu$.
A  geometric description of the linking number is given as follows.
Let $B$ be a $2$-chain in  $\Si \times I$ such that $\partial B = K - v$, where $v$ is a $1$-cycle in $\Si \times 1 \subset \Si \times I$.
Then  $\lk(J,K) = J \cdot B$ (the homological intersection number of $J$ with $B$), see \cite[\S1.2]{CT}.
Note that $ \lk(K,J)  - \lk(J,K)  = p_*([J]) \cdot p_*([K])$,  where $p_*([J]) \cdot p_*([K])$  is the intersection number in $\Si$ of the projections
of $J$ and $K$ to $\Si$,  \cite[\S1.2]{CT}, and so the linking number need not be symmetric.

Let $F$ be a compact, connected oriented surface of genus $g$ with $n$ boundary components, $n >0$, embedded in the interior of $\Si \times I$.
The homology groups $H_1(\Si \times I \, \sm F, \, \Si \times 1)$ and $H_1(F)$ are isomorphic, both are free abelian of rank $2g + n-1$, and,
furthermore, there is a unique non-singular bilinear form
\[
\beta \colon H_1(\Si \times I \, \sm F, \, \Si \times 1) ~\times~ H_1(F) ~\lto~ \ZZ
\]
such that $\beta([c],[d]) = \lk(c,d)$ for all oriented closed simple curves $c$ and $d$ in
$\Si \times I \sm F$ and $F$, respectively. 
The proof of this assertion is similar to the classical case (that is, $S^3$ in place of $\Si \times I$) as given in \cite[Proposition 6.3]{Lickorish}
and makes use of an analysis of the Mayer-Vietoris sequence of the union 
$\Si \times I  = (\Si \times I \sm F) ~\cup~ \int(N(F))$, where $N(F)$ is a closed regular neighborhood of $F$.

Assume that $F$ is a connected Seifert surface for a homologically trivial oriented link $L$ with $n$ components in $\Si \times I$.
The surface $F$ has a closed regular neighborhood $N(F)$ that can be given a
parametrization $F \times [-1,1] \cong N(F)$ with $F \times 0$ corresponding to $F$ and such that
the meridian of each component of $L$ enters $N(F)$ at $F \times -1$ and departs at $F \times 1$.
Let $\iota^\pm \colon F \to \Si \times I \sm F$ be the embeddings given by $\iota^\pm(x) = (x, \, \pm 1)$,
and we denote by $x^\pm = \iota^\pm(x)$ 
the ``positive and negative pushoffs''.
We thus obtain a pair of bilinear forms,  the {\it Seifert forms} of $F$,
\[
\al^\pm \colon H_1(F) ~\times~ H_1(F) ~\lto~ \ZZ
\]
given by $\al^\pm(x,y) = \beta(x^\pm, y)$.

Let $Z = \left( \Si \times I \, \sm L \right) / \, \Si \times 1$ be the space obtained from $\Si \times I \, \sm L $ by
collapsing \hbox{$\Si \times 1$} to a point.
The fundamental group of $Z$ is isomorphic to the link group $G_L$.
The abelianization of $G_L$ is
$H_1(G_L) \cong H_1(Z) \cong H_1(\Si \times I \, \sm L, \, \Si \times 1) \cong \ZZ^n$.
Let $\ep \colon \ZZ^n \to \ZZ$ be the homomorphism $\ep(a_1, \ldots, a_n) = \sum^n_{i=1}a_i$
and let $Z_\infty$ be the covering space of $Z$ corresponding to the homomorphism
$G_L \to H_1(G_L) \cong  \ZZ^n \stackrel{\ep}{\lto} \ZZ$.
We construct a model for $Z_\infty$, generalizing the well-known construction in \cite[{\S}6]{Lickorish},
using the Seifert surface $F$  together with the parametrization $F \times [-1,1] \cong N(F)$.
Let $X = \left( \Si \times I \, \sm \int(N(F)) \right) / \, \Si \times 1$.
Note that $X$ is a finite CW complex and $H_1(X) \cong H_1(\Si \times I \, \sm F, \, \Si \times 1)$.
Let $Y$ be the space obtained by cutting $X$ along $F$, that is, $Y$ is up to homeomorphism the space
$X \sm F$ compactified with two copies, $\iota^-(F)$ and $\iota^+(F)$, of $F$.
Let $\phi \colon \iota^-(F) \to \iota^+(F)$ be the homeomorphism determined by $\iota^\pm$.\
(Note that $X$ can be recovered from $Y$ by identifying $\iota^-(F)$ with $\iota^+(F)$ via $\phi$.) Set $Y_i = Y \times \{i\}$ for $i \in \ZZ$ and let $X_\infty$ be the space obtained from the disjoint union $\bigcup_{i \in \ZZ} Y_i$ by gluing $Y_i$ to $Y_{i+1}$ by identifying $F_i^- = \iota^-(F) \times \{i\}$ to $F_{i+1}^+ = \iota^+(F)\times \{i+1\}$ via $\varphi.$ In other words, $(y^-,i) \sim (\varphi(y)^+,i+1)$ for $y \in F$,
where $y^\pm = \iota^\pm(y)$ denote the positive and negative push-offs. Then $X_\infty$ is homeomorphic to the infinite cyclic cover $Z_\infty$,
and the homeomorphism   $t \colon X_\infty \to X_\infty$
given by $t(y,i) =(y,i+1)$
induces an isomorphism $t \colon H_1(X_\infty)  \lto H_1(X_\infty)$ and
gives $H_1(X_\infty)$ the structure of a $\ZZ[t^{\pm 1}]$-module.
The argument employed in \cite[Theorem 6.5]{Lickorish} extends to prove the following theorem.

\begin{theorem} \label{modulepresentation}
Let $F$ be a connected Seifert surface for a homologically trivial oriented link $L$ in $\Si \times I$.
Let $V^\pm$  be matrices for the Seifert forms $\alpha^\pm$
with respect to a given basis for $H_1(F)$.
Then $t\,V^- -\, V^+$ is a presentation matrix for the $\ZZ[t^{\pm 1}]$-module $H_1(X_\infty)$. \qed
\end{theorem}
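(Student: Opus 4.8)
The plan is to adapt the Mayer--Vietoris argument of \cite[Theorem 6.5]{Lickorish} to the present setting, the essential new feature being that linking numbers in $\Si \times I$ need not be symmetric, so that $V^-$ is not the transpose of $V^+$ and both Seifert matrices must be carried separately through the computation. Write $\La = \ZZ[t^{\pm 1}]$. First I would organize the infinite cyclic cover as $X_\infty = \bigcup_{i\in\ZZ} Y_i$, with the pieces $Y_i$ glued along the copies $F_i$ of $F$, and apply a Mayer--Vietoris decomposition separating the even-indexed pieces from the odd-indexed ones; the intersection is then a disjoint union of copies of $F$. Because the deck transformation $t$ shifts the index by one, the terms reassemble as $\La$-modules, the intersection contributing $H_1(F)\otimes_\ZZ \La$ and the union contributing $H_1(Y)\otimes_\ZZ \La$, so that the sequence takes the form
\[
H_1(F)\otimes_\ZZ \La \xrightarrow{\ \Psi\ } H_1(Y)\otimes_\ZZ \La \lto H_1(X_\infty) \lto H_0(F)\otimes_\ZZ \La \xrightarrow{\ \Phi\ } H_0(Y)\otimes_\ZZ \La .
\]

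Next I would show that the connecting map $\Phi$ is injective, which identifies $H_1(X_\infty)$ with $\coker \Psi$. Since $F$ and $Y$ are connected, $\Phi$ is, up to a unit, multiplication by $t-1$ on $\La$, and this is injective because $\La$ is an integral domain; hence the map $H_1(X_\infty) \to H_0(F)\otimes_\ZZ \La$ vanishes and $H_1(X_\infty)\cong \coker \Psi$. The heart of the argument is then to identify $\Psi$. By construction the gluing $(y^-,i)\sim(\phi(y)^+,i+1)$ sends a class supported on $F_i$ to the difference of its two push-offs, one into $Y_i$ via $\iota^-$ and one into $Y_{i+1}$ via $\iota^+$; tracking the factor of $t$ produced by the index shift shows that, in a fixed basis of $H_1(F)$ and the induced basis of $H_1(F)\otimes_\ZZ \La$, the map $\Psi$ is represented by $t V^- - V^+$, up to the standard ambiguities of sign, transposition, and $t\leftrightarrow t^{-1}$.

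To make the last step rigorous I would invoke the nonsingular Seifert pairing $\beta$ together with the push-off maps $\iota^\pm_*\colon H_1(F)\to H_1(Y)$. Following the analysis in \cite[Theorem 6.5]{Lickorish}, one identifies $H_1(Y)$ with $H_1(\Si\times I \sm F, \Si\times 1)$, which is free abelian of rank $2g+n-1$, equal to the rank of $H_1(F)$; thus $\Psi$ is a square matrix and $\coker \Psi$ is presented by it. Under this identification $\iota^\pm_*$ carries $x$ to its push-off $x^\pm$, and the defining relation $\al^\pm(x,y)=\beta(x^\pm,y)$, combined with the nonsingularity of $\beta$, shows that in a basis of $H_1(F)$ and its $\beta$-dual basis the map $\iota^\pm_*$ has matrix $V^\pm$. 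Substituting these into the formula for $\Psi$ produces $tV^- - V^+$.

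I expect the main obstacle to be exactly this identification of $H_1(Y)$ and the push-off maps in the thickened-surface setting: one must verify that $H_1(Y)$ is free of the stated rank and that $\beta$ remains unimodular when $\Si$ has positive genus, so that the non-symmetric forms $\al^+$ and $\al^-$ may be read off as the matrices of $\iota^+_*$ and $\iota^-_*$ respectively. Pinning down the precise form $tV^- - V^+$ (rather than a transpose or a $t\leftrightarrow t^{-1}$ variant) is a matter of fixing orientation and push-off conventions consistently with the gluing $(y^-,i)\sim(\phi(y)^+,i+1)$ and the action of $t$; since all such variants present the same $\ZZ[t^{\pm1}]$-module up to isomorphism, the resulting elementary ideals, and hence $\De_K(t)$, are unaffected.
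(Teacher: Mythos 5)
Your proposal is correct and follows essentially the same route as the paper: the paper's proof consists precisely of the assertion that the Mayer--Vietoris argument of Lickorish's Theorem 6.5 extends to $\Si \times I$, given the nonsingular pairing $\beta$ and the construction of $X_\infty$ from copies of $Y$ established in the paragraphs preceding the theorem. Your even/odd decomposition, the injectivity of the map on $H_0$, and the identification of the push-off maps $\iota^\pm_*$ with the matrices $V^\pm$ via the nonsingularity of $\beta$ are exactly the ingredients of that extension, with the sign/transpose/$t \leftrightarrow t^{-1}$ convention-chasing you flag being the only remaining bookkeeping.
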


\begin{corollary} \label{cor-AC}
The first elementary ideal of an almost classical knot or link $L$ is principal, generated by
$\det(tV^- - V^+)$ with $V^\pm$ as in Theorem \ref{modulepresentation}.
Hence, up to units,  $\De_L(t)$ coincides with $\det(t V^- - V^+)$.
\end{corollary}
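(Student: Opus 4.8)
The plan is to deduce the corollary directly from Theorem \ref{modulepresentation} together with the relationship between the infinite cyclic cover $X_\infty$ and the Alexander invariants, essentially by unwinding what ``first elementary ideal'' means in this setting. First I would recall that for an almost classical knot or link $L$, Theorem \ref{thm-TFAE} guarantees the existence of a connected oriented Seifert surface $F \hookrightarrow \Si \times I$ with $\partial F = L$, so Theorem \ref{modulepresentation} applies and furnishes the square matrix $tV^- - V^+$ as a presentation matrix for $H_1(X_\infty)$ over $\ZZ[t^{\pm 1}]$. The key observation is that the Alexander module relevant to $\cE_1$ is precisely $H_1(X_\infty)$: just as in the classical case, the first elementary ideal of the Alexander module of $G_L$ is computed from a presentation matrix of $H_1(X_\infty)$, and Theorem \ref{modulepresentation} exhibits such a presentation matrix that happens to be square.

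The crux is the following square-matrix observation. If $H_1(F)$ has rank $r = 2g + n - 1$, then $tV^- - V^+$ is an $r \times r$ matrix, so the presented module $H_1(X_\infty)$ has a presentation with $r$ generators and $r$ relations. The elementary ideal of $H_1(X_\infty)$ generated by the maximal ($r \times r$) minors is therefore the principal ideal $\bigl(\det(tV^- - V^+)\bigr)$. The step that requires care is reconciling the indexing: the knot group $G_L$ has a Wirtinger-type presentation whose Alexander matrix $A$ is larger, and whose $\cE_0$ vanishes for the algebraic reasons noted earlier in the excerpt, so I must argue that the ideal $\cE_1$ of $G_L$ equals the \emph{zeroth} elementary ideal of the module $H_1(X_\infty)$ presented by the square matrix. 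This is the standard reindexing that occurs because $H_1(X_\infty)$ is the Alexander module while the Wirtinger presentation carries one redundant generator; I would invoke that the Seifert-surface presentation computes the same module and that taking its full-size determinant gives a generator of $\cE_1$.

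I would then conclude by assembling these pieces: since $tV^- - V^+$ is a square presentation matrix for $H_1(X_\infty)$, its determinant generates the relevant elementary ideal, which is $\cE_1$ of $L$; hence $\cE_1$ is principal, generated by $\det(tV^- - V^+)$, and therefore $\De_L(t)$ coincides with $\det(tV^- - V^+)$ up to units. The main obstacle I anticipate is not the linear algebra, which is routine once the matrix is square, but rather the bookkeeping that identifies the ideal $\cE_1$ defined earlier via Fox calculus on $G_L$ with the elementary ideal arising from the topological presentation of $H_1(X_\infty)$; this requires knowing that the module $G_L'/G_L''$ is isomorphic to $H_1(X_\infty)$ as a $\ZZ[t^{\pm 1}]$-module, which is the almost-classical analogue of the classical fact and follows from the construction of $X_\infty$ via the Seifert surface.
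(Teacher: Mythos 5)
Your proposal is correct and takes essentially the same route as the paper, which states this corollary without a separate proof as an immediate consequence of Theorem \ref{modulepresentation}: the square presentation matrix $tV^- - V^+$ for $H_1(X_\infty)$ has principal maximal-minor ideal generated by its determinant, and the standard index shift (the Fox Jacobian of the Wirtinger-type presentation presents $H_1(X_\infty)$ plus a free rank-one summand) identifies that ideal with $\cE_1$, so $\De_L(t) = \det(tV^- - V^+)$ up to units. The details you flag as needing care are exactly the ones the paper leaves implicit, and you resolve them correctly.
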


\begin{remark}
Note that for a  classical link, the matrices $V^-$ and  $V^+$ are transposes of each other.   This need not be true if the genus of $\Si$ is positive.
Theorem \ref{modulepresentation} shows that our $\De_L(t)$ coincides (up to units) with
the {\it extended Alexander-Conway polynomial} of $L$ as defined by
Cimasoni and Turaev,
 \cite{CT}.
\end{remark}

The next result is standard, and for a proof see Lemma 6.7.6 of \cite{Cromwell}.
\begin{lemma} \label{lem-standard}
Suppose $L$ is an almost classical knot or link with $n$ components. Then
$$\det(V^- - V^+)= \begin{cases} 1 &\text{if $\, n=1$,}   \\
0 & \text{if $\, n>1.$ } \end{cases}$$
\end{lemma}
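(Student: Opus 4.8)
The plan is to show that $V^- - V^+$ is, up to an overall sign, the matrix of the intersection form on $H_1(F)$, and then to read off the determinant from the elementary linear algebra of that form on a compact oriented surface with boundary. Fix a basis $a_1,\dots,a_r$ of $H_1(F)$, with $r = 2g+n-1$, represented by oriented simple closed curves on $F$. By definition the $(i,j)$ entry of $V^- - V^+$ is
$$(V^- - V^+)_{ij} = \al^-(a_i,a_j) - \al^+(a_i,a_j) = \lk(a_i^-,a_j) - \lk(a_i^+,a_j),$$
where $a_i^\pm = \iota^\pm(a_i)$ are the two pushoffs of $a_i$ into $\Si\times I \sm F$. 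So everything reduces to understanding this difference of linking numbers.

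The heart of the argument is the geometric identity
$$\lk(a_i^-,a_j) - \lk(a_i^+,a_j) = \pm\,(a_i\cdot a_j),$$
where $a_i\cdot a_j$ is the algebraic intersection number of $a_i$ and $a_j$ on the oriented surface $F$. To establish it I would work inside the bicollar $F\times[-1,1]\cong N(F)$ and consider the embedded annulus $A_i = a_i\times[-1,1]$, whose oriented boundary is $a_i^+ - a_i^-$, so that $a_i^- - a_i^+ = \partial(-A_i)$. Using the chain-level description of linking recorded earlier, namely $\lk(J,K) = J\cdot B$ for any $2$-chain $B$ with $\partial B = K - v$ and $v$ a $1$-cycle in $\Si\times 1$, one writes $\lk(a_i^\mp,a_j) = a_i^\mp\cdot B_j$ for a fixed chain $B_j$ with $\partial B_j = a_j - v_j$. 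The difference then becomes $(a_i^- - a_i^+)\cdot B_j = \bigl(\partial(-A_i)\bigr)\cdot B_j$. Applying the boundary (Stokes-type) adjunction $(\partial C)\cdot B = \pm\,C\cdot\partial B$ for intersection numbers of a $2$-chain $C$ against a $2$-chain $B$ in the $3$-manifold $\Si\times I$ converts this to $\pm\,A_i\cdot(a_j - v_j) = \pm\,A_i\cdot a_j$, the term $v_j$ dropping out because it lies in $\Si\times 1$ while $A_i$ sits in the interior. Finally $A_i = a_i\times[-1,1]$ meets $a_j\subset F\times 0$ transversely exactly in the points of $a_i\cap a_j$ on $F$, counted with the surface signs, so $A_i\cdot a_j = \pm\,(a_i\cdot a_j)$, giving the identity. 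The main obstacle I anticipate is precisely this step: in $\Si\times I$ the linking number is \emph{not} symmetric, since $\lk(K,J)-\lk(J,K) = p_*[J]\cdot p_*[K]$, so $V^-$ and $V^+$ are genuinely distinct matrices and the classical shortcut $V - V^{\mathsf T} = \text{(intersection form)}$ is unavailable; the real content is to check that the purely \emph{local} pushoff cancellation survives in the thickened surface and to fix the signs so that a single global sign works for all $i,j$.

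Granting the identity, $V^- - V^+$ agrees up to an overall sign with the matrix of the (skew-symmetric) intersection form of $F$ in the basis $a_1,\dots,a_r$; in particular $\det(V^- - V^+)$ is independent of the basis, since a change of basis acts by congruence $M\mapsto P^{\mathsf T}MP$ with $\det P = \pm 1$, and independent of the global sign whenever $r$ is even. It remains to compute the determinant of the intersection form on a compact connected oriented surface $F$ of genus $g$ with $n$ boundary components. When $n=1$, the boundary curve is null-homologous in $F$, the inclusion $H_1(F)\to H_1(F,\partial F)$ is an isomorphism, and Poincaré--Lefschetz duality makes the intersection pairing on $H_1(F)\cong\ZZ^{2g}$ unimodular; choosing a symplectic basis then forces $\det(V^- - V^+) = 1$ (the Pfaffian is $\pm 1$). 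When $n>1$, the classes of the boundary curves span a rank $n-1$ subgroup of $H_1(F)$ contained in the radical of the intersection form, so the form is degenerate and $\det(V^- - V^+) = 0$. This yields exactly the asserted dichotomy and recovers Lemma 6.7.6 of \cite{Cromwell}.
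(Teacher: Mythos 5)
Your proof is correct, and the comparison here is a bit unusual: the paper gives no argument of its own for this lemma, it simply cites Lemma 6.7.6 of \cite{Cromwell}, which is the \emph{classical} statement that $\det(V - V^{\mathsf T})$ is $1$ for knots and $0$ for links, proved there by identifying $V - V^{\mathsf T}$ with the intersection form of the Seifert surface and computing with a symplectic basis. Your write-up follows that same underlying strategy but supplies exactly the point the bare citation glosses over: in $\Si \times I$ linking is not symmetric, so $V^+$ is no longer the transpose of $V^-$ (as the paper's own Remark after Corollary \ref{cor-AC} notes), and the object to be identified with the intersection form is $V^- - V^+$; this identification must be re-proved rather than quoted. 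Your chain-level computation $(a_i^- - a_i^+)\cdot B_j = \pm\, A_i \cdot (a_j - v_j) = \pm\,(a_i\cdot a_j)$, using the annulus $A_i = a_i \times [-1,1]$ in the bicollar and the fact that $v_j \subset \Si\times 1$ misses $A_i$, is the correct adaptation (it is essentially the argument underlying the Seifert-form results of Cimasoni--Turaev \cite{CT}). The endgame is also right on both counts: for $n=1$ the form is unimodular and skew-symmetric of even rank $2g$, so the determinant is the square of the Pfaffian, namely $+1$, and the even rank kills the overall sign ambiguity, matching the lemma's claim of exactly $1$ rather than $\pm 1$; for $n>1$ the boundary classes give a nonzero radical (their images vanish in $H_1(F,\partial F)$, through which the pairing factors), forcing determinant $0$. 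So yours is not a different route so much as the route the paper implicitly relies on, written out in the generality the lemma actually requires.
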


\begin{definition} \label{Alex-def}
If $K$ is an almost classical knot or link, we define its Alexander polynomial to be the Laurent polynomial $\De_K(t)=\det(t V^- - V^+)$. 
Theorem \ref{modulepresentation} shows that $\De_K(t)$ is a generator of the first elementary ideal $\cE_1,$ and Lemma \ref{lem-standard} shows that $\De_K(1)= 1$ for knots and that $\De_K(1)=0$ for links.
As an invariant of almost classical knots and links, $\De_K(t)$ is well-defined up to units in $\ZZ[t^{\pm 1}],$ and it is an invariant of the welded equivalence class of $K$.
\end{definition}

\begin{figure}[ht]
\centering
\includegraphics[scale=0.95]{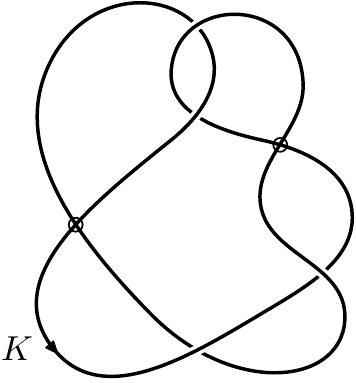} \qquad \qquad 
\includegraphics[scale=0.82]{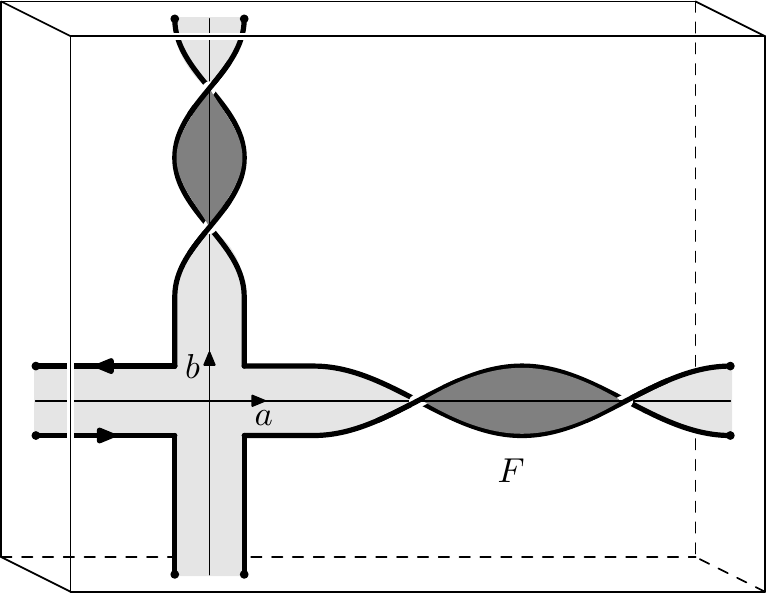}
\caption{The virtual knot $4.99$ on the left and its Seifert surface in $T^2 \times I$ on the right.} 
\label{Seifert}
\end{figure}    

\begin{example}
The Carter surface $\Si_K$ of the virtual knot $K=4.99$ has genus $g(\Si_K)=1$ and Figure \ref{Seifert} shows a virtual knot diagram for $K$ along with its Seifert surface $F \subset T^2 \times I.$ (In Figure \ref{Seifert}, $T^2 \times I$ is obtained from the cube on the right by identifying the two faces on top and bottom and the two faces on left and right in the obvious way. The front face is $T^2 \times 1$, and this choice determines how to compute linking numbers.)

This shows that $F$ is a union of two twisted bands and also has genus $g(F)=1.$   
We will use Figure \ref{Seifert} to compute the Seifert matrices $V^\pm$ of $F$ in terms of the generators $a,b \in H_1(F)$. We orient $F$ so that $a \cdot b =1$, and we orient $F \times [-1,1]$ so that $a^+$ and $b^+$ are obtained by pushing up along the darker region and down along the lighter region. Using the picture, one can see that
\begin{eqnarray*}
&\lk(a^+,a) = -1, & \lk(a^+,b) = 0, \\
&\lk(b^+,a) = 0, & \lk(b^+,b) = 1,
\end{eqnarray*}
and that implies $V^+ = \begin{bmatrix} -1 & 0 \\ 0 & 1 \end{bmatrix}.$
Similarly, we have
\begin{eqnarray*}
&\lk(a^-,a) = -1 & \lk(a^-,b) = 1 \\
&\lk(b^-,a) = -1 & \lk(b^-,b) = 1
\end{eqnarray*}
which implies $V^- = \begin{bmatrix} -1 & 1 \\ -1 & 1 \end{bmatrix}.$
Thus 
$$\De_K(t) = \det\left(t V^- -V^+\right) = \det\left(
\begin{bmatrix} 1-t & t \\ -t & t-1 \end{bmatrix}
\right)=2t-1.$$
\end{example}

\begin{remark}
Taking $\De_K(t)=\det\left(t^{1/2} V^- - t^{-1/2} V^+\right)$ gives a normalization (Conway polynomial) that is well-defined up to multiplication by $t^g$, where $g = g(\Si_K)$ is the genus of the minimal Carter surface. This follows by combining the argument on p.~560 of \cite{CT} with the statement of Kuperberg's theorem \cite{Kuperberg}. In the example above, since $g(\Si_K)=1,$ this means that $\De_K(t)$ has a well-defined sign determined by Lemma \ref{lem-standard}.
\end{remark}

For the Laurent polynomial $f(t)$, we define $\width(f(t))$ as the difference between its top degree and its lowest degree. The next result shows that the Seifert genus is bounded below by the width of $\De_K(t)$.

\begin{theorem} Suppose $K$ is an almost classical knot or link with $n$ components, then  $$\width\De_K(t) ~ \leq ~ 2g_s(K) + n -1.$$
\end{theorem}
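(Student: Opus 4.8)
The plan is to bound the width directly from the size of the Seifert matrices, using a Seifert surface of minimal genus. First I would record that, by Definition \ref{Alex-def}, $\De_K(t)$ is well-defined only up to units in $\ZZ[t^{\pm 1}]$; since multiplication by a unit $\pm t^k$ merely shifts all exponents by $k$ and leaves the gap between the top and bottom degrees unchanged, the quantity $\width \De_K(t)$ is a genuine invariant, independent of the presentation and of the particular Seifert surface used to compute it. This is what licenses us to evaluate the width on any convenient representative.

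Next I would choose a connected oriented Seifert surface $F \hookrightarrow \Si \times I$ for $K$ whose genus realizes the Seifert genus, so that $g(F) = g_s(K)$; such a surface exists by the definition of $g_s(K)$ together with the construction in Section \ref{section6}. Since $F$ is a compact oriented surface of genus $g = g_s(K)$ with $n$ boundary components, $H_1(F)$ is free abelian of rank $N = 2g + n - 1$, and hence the Seifert matrices $V^\pm$ are $N \times N$ integer matrices. By Theorem \ref{modulepresentation} and Corollary \ref{cor-AC}, we may take $\De_K(t) = \det(t V^- - V^+)$ up to units.

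The key step is then an elementary degree count. Every entry of $t V^- - V^+$ is affine linear in $t$, of the form $t (V^-)_{ij} - (V^+)_{ij}$. Expanding the determinant as a signed sum over permutations of $\{1, \ldots, N\}$, each summand is a product of $N$ such affine-linear entries, hence a polynomial in $t$ with no terms of negative degree and no terms of degree exceeding $N$. Therefore $\det(t V^- - V^+)$ is supported in the exponent range $[0, N]$, so its top degree is at most $N$ and its bottom degree is at least $0$. Consequently $\width \De_K(t) \leq N = 2 g_s(K) + n - 1$, as claimed.

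I do not expect a serious obstacle here: the whole argument reduces to choosing a minimal-genus surface and to the observation that the determinant of an $N \times N$ matrix with affine-linear entries has width at most $N$. The only points requiring a word of care are the well-definedness of the width (which justifies using the minimal surface) and, in the link case, the possibility that $\De_K(t)$ vanishes. For knots this cannot occur, since $\De_K(1) = 1$ by Lemma \ref{lem-standard}; and when $\De_K(t) = 0$ for a link the asserted inequality holds vacuously, so the bound is unaffected.
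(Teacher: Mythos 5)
Your proposal is correct and follows essentially the same route as the paper's proof: pick a minimal-genus Seifert surface so that the Seifert matrices $V^\pm$ have size $2g_s(K)+n-1$, then bound $\width\De_K(t)$ by the degree count on $\det(tV^- - V^+)$. Your extra remarks (invariance of the width under units, and the vacuous case $\De_K(t)=0$ for links) only make explicit what the paper's argument leaves implicit.
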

\begin{proof} Given a Seifert surface $F$ for $K$, we see that $H_1(F)$ 
is free abelian of rank $2g(F) + n -1$.
Thus the Seifert matrices $V^+$ and $V^-$ are square matrices of size $2g(F) + n -1$,
and it follows that 
$$\width \De_K(t) \leq \deg \left(\det(t V^- - V^+)\right) \leq 2g(F) + n -1.$$
Taking a Seifert surface $F$ of minimal genus, we have $g_s(K) = g(F)$ and the theorem now follows.
 \end{proof}

The next result shows that the Alexander polynomial $\De_K(t)$ is multiplicative under connected sum. Note that although connected sum is not a well-defined procedure for virtual knots, it is nevertheless true that if $K_1$ and $K_2$ are both Alexander numberable diagrams, then so is $K_1 \# K_2$ for all possible choices, and the Alexander polynomial of $K_1 \# K_2$ is independent of these choices.

\begin{theorem} If $K_1$ and $K_2$ are both Alexander numberable, then so is $K_1 \# K_2$, and $\De_{K_1\# K_2}(t)= \De_{K_1}(t) \, \De_{K_2}(t).$ 
\end{theorem}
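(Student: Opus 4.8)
The plan is to establish the two assertions separately: first that $K_1 \# K_2$ is again Alexander numberable (so that $\De_{K_1 \# K_2}$ is defined), and then that the Alexander polynomials multiply. For the first part, I would use Theorem \ref{thm-TFAE}, which characterizes almost classicality via homological triviality in the thickened Carter surface. A connected sum of diagrams is performed by cutting both diagrams along short arcs near the basepoints and splicing; since an Alexander numbering is a locally determined integer labeling of the short arcs (governed by the conditions in Figure \ref{Alexander-Numbering-Def}), one may simply add a constant to one of the two numberings so that the labels agree across the splice site. The local conditions are unaffected elsewhere, so the spliced diagram inherits a consistent Alexander numbering. This also shows the numbering (and hence everything downstream) is independent of the choice of connecting arcs up to the global constant, which does not affect the elementary ideal.

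For the multiplicativity of $\De_K(t)$, my approach would be to work at the level of Seifert surfaces and Seifert matrices, using Theorem \ref{modulepresentation} and Corollary \ref{cor-AC}. Let $F_1 \subset \Si_1 \times I$ and $F_2 \subset \Si_2 \times I$ be Seifert surfaces for $K_1$ and $K_2$. The connected sum $K_1 \# K_2$ sits in the thickened connected sum $\Si_1 \# \Si_2$, and I would construct a Seifert surface $F$ for it by boundary-connect-summing $F_1$ and $F_2$ along a band near the splice site. The key point is that boundary connect sum does not create any new first homology beyond $H_1(F_1) \oplus H_1(F_2)$: one has $H_1(F) \cong H_1(F_1) \oplus H_1(F_2)$, since attaching a band along the boundary is a homotopy equivalence onto the wedge. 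The next step is to verify that the Seifert forms split as a block sum. Because the connecting band is localized away from both $F_1$ and $F_2$, a pushoff of a curve supported in $F_1$ links trivially with any curve in $F_2$ (the two pieces can be separated by a vertical level surface coming from the connect-sum sphere, across which the linking contribution vanishes). Hence in a basis adapted to the splitting $H_1(F) \cong H_1(F_1) \oplus H_1(F_2)$, the Seifert matrices take block-diagonal form
\begin{equation*}
V^\pm = \begin{bmatrix} V_1^\pm & 0 \\ 0 & V_2^\pm \end{bmatrix}.
\end{equation*}
Taking determinants then gives $\De_{K_1 \# K_2}(t) = \det(t V^- - V^+) = \det(t V_1^- - V_1^+)\det(t V_2^- - V_2^+) = \De_{K_1}(t)\,\De_{K_2}(t)$, as desired.

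The step I expect to be the main obstacle is the vanishing of the cross-linking terms, i.e.\ justifying that $\alpha^\pm$ (and hence $V^\pm$) is block-diagonal. Unlike the classical case, where this follows cleanly from the existence of a separating sphere in $S^3$, here one must argue inside the thickened connect-sum surface $(\Si_1 \# \Si_2) \times I$ and confirm that the connect-sum region furnishes a separating surface across which a curve in $F_1$ and a pushoff of a curve in $F_2$ have zero homological intersection. This requires checking that the linking number, defined via the $2$-chain $B$ with $\partial B = K - v$ as in the paragraph preceding Proposition \ref{MVargument}, picks up no contribution from the other summand; concretely, one chooses $B$ supported in the appropriate summand and uses that $K_1$ and $K_2$ lie on opposite sides of the separating surface. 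Once this localization is established, the block-diagonal structure and the determinant computation are routine. I would also remark that the independence of $\De_{K_1 \# K_2}(t)$ from the choices made in forming the connected sum follows because any two Seifert surfaces for the same almost classical knot present isomorphic Alexander modules, so their polynomials agree up to units.
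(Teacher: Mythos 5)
Your proposal is correct and follows essentially the same route as the paper: form the boundary connected sum $F_1 \# F_2$ of the Seifert surfaces inside $(\Si_1 \# \Si_2) \times I$, observe that the Seifert matrices are block diagonal, and conclude by multiplicativity of determinants. The only differences are that you justify the vanishing of the cross-linking terms explicitly (a step the paper dismisses as ``easily seen'') and that you establish Alexander numberability of $K_1 \# K_2$ by splicing the two diagram numberings directly, whereas the paper deduces it from Theorem \ref{thm-TFAE}; both refinements are sound.
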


\begin{proof} Let $\Si_1$ and $\Si_2$ be the Carter surfaces for $K_1$ and $K_2,$ respectively, and construct Seifert surfaces $F_1 \subset \Si_1 \times I$
and $F_2 \subset \Si_2 \times I.$ Thus the knot $K_1\# K_2$ has Carter surface $\Si_1 \# \Si_2,$ and the boundary connected sum of $F_1$ and $F_2$ is a Seifert surface for $K_1\# K_2$. Theorem \ref{thm-TFAE} applies to show that $K_1 \# K_2$ is almost classical, and the Seifert matrices of the surface $F_1 \# F_2$ are easily seen to be block diagonal. Thus
$$V^\pm = \begin{bmatrix}  V^\pm_1 & 0 \\ 0 & V^\pm_2 \end{bmatrix},$$
where $V^\pm_1$ and $V^\pm_2$ denote the Seifert matrices of $F_1$ and $F_2,$ respectively. It follows that
\begin{eqnarray*}
\De_{K_1\# K_2}(t) &=& \det\left(t V^- -  V^+\right) \\
&=& \det\left(\begin{bmatrix}  tV^-_1-V^+_1 & 0 \\ 0 & tV^-_2 -V^+_2\end{bmatrix}\right) \\
&=& \det\left(tV^-_1-V^+_1\right) \, \det \left(tV^-_2-V^+_2\right) \\
&=& \De_{K_1}(t) \, \De_{K_2}(t),
\end{eqnarray*}
which proves the theorem.
\end{proof}

We will now show that $\De_K(t)$ satisfies a skein formula. 
Suppose that $K_+, \, K_-, \, K_0$ are three virtual knot diagrams which are identical outside a neighbourhood of one crossing, where they are related as in Figure \ref{skein}. Notice that if any one of $K_+, \, K_-, \, K_0$ is Alexander numberable, then so are the other two. This shows that skein theory makes sense for almost classical knots and links, and the next result gives a ``local" skein formula for the Alexander polynomial  $\De_K(t)$ for almost classical knots.
 
\begin{theorem}\label{Skein-Theorem}
Let $K_+, K_-$ and $K_0$ be three virtual knot diagrams that are identical outside of a small neighborhood of one crossing, where they are related as in Figure \ref{skein}.  We suppose that one (and hence all three) of the diagrams is Alexander numberable. Although each of $\De_{K_+}(t), \De_{K_-}(t)$ and $\De_{K_0}(t),$ is only well-defined up to multiplication by $\pm t^i$, if one uses the same Seifert surface for all three diagrams, then  $\De_{K_+}(t), \De_{K_-}(t)$ and $\De_{K_0}(t)$ become well-defined relative to one another. The Alexander polynomials satisfy the skein relation    
$$ \De_{K_+}(t) - \De_{K_-}(t) = (t - t^{-1}) \De_{K_0}(t).$$
\end{theorem}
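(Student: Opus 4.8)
The plan is to build Seifert surfaces for $K_+$, $K_-$, and $K_0$ that differ from one another by a single band, and then to read off the skein relation from a one-entry cofactor expansion of the associated Seifert matrices. First I would produce a common surface. Since $K_0$ is the oriented smoothing at the distinguished crossing, Seifert's algorithm (Proposition~\ref{prop-subsurf}) applied to $K_+$ or $K_-$ smooths that crossing into precisely the resolution appearing in $K_0$; hence all three diagrams have the same Seifert circuits and the same bounding subsurfaces $S_1,\dots,S_m$ in $\Si\times I$. Assembling these together with the bands at the remaining crossings gives a Seifert surface $F_0$ for $K_0$, and attaching one further band $b$ at the distinguished crossing gives Seifert surfaces for $K_+$ and $K_-$; the two versions of $b$ differ only by a localized half-twist of opposite sign. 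Thus, fixing a basis $e_1,\dots,e_k$ of $H_1(F_0)$, the surfaces for $K_\pm$ have $H_1$ with basis $e_1,\dots,e_k,\ga$, where $\ga$ is the core loop through $b$.

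Next I would record the Seifert matrices in this basis, using the forms $\al^\pm(x,y)=\beta(x^\pm,y)$ introduced above. Writing $V_0^\pm$ for the matrices of $F_0$, the Seifert matrices of the surface for $K_+$ have the block form
$$
V^-=\begin{bmatrix} V_0^- & \ast\\ \ast & a\end{bmatrix},\qquad
V^+=\begin{bmatrix} V_0^+ & \ast\\ \ast & a\end{bmatrix},
$$
and those for $K_-$ are identical except that the corner entry $a$ is replaced by $a'$. Here the starred off-diagonal blocks record the linkings $\lk(\ga^\pm,e_j)$ and $\lk(e_j^\pm,\ga)$; because the two versions of $b$ attach along the same feet and differ only by a twist supported near the crossing, these linkings are unchanged by the crossing change and so agree for $K_+$ and $K_-$. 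The corner entry equals $\lk(\ga^+,\ga)=\lk(\ga^-,\ga)$, the two pushoffs agreeing because $\al^-(\ga,\ga)-\al^+(\ga,\ga)$ is the self-intersection $\ga\cdot\ga=0$; and the half-twist changes its value by exactly one unit, so that $a-a'=1$ once orientations are fixed.

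I would then compare the three polynomials in the symmetric (Conway) normalization of the Remark, taking $\De_K(t)=\det(tV^- - t^{-1}V^+)$ so that the matrix for $K_0$ is exactly the upper-left block of those for $K_\pm$ and all three are directly comparable; fixing the common surface and basis is what removes the $\pm t^i$ ambiguity noted in the statement. In this normalization the corner entry of $tV^- - t^{-1}V^+$ is $(t-t^{-1})a$ for $K_+$ and $(t-t^{-1})a'$ for $K_-$, while every other entry is the same for the two diagrams. Since the two matrices differ in a single entry, their determinants differ by that entry's difference times its cofactor, and the cofactor of the corner is precisely $\det(tV_0^- - t^{-1}V_0^+)=\De_{K_0}(t)$. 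Hence
$$
\De_{K_+}(t)-\De_{K_-}(t)=(t-t^{-1})(a-a')\,\De_{K_0}(t)=(t-t^{-1})\,\De_{K_0}(t),
$$
which is the asserted skein relation.

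The main obstacle I expect is the geometric and matrix bookkeeping behind the block structure, rather than the final determinant manipulation. One must verify carefully, in the thickened-surface setting, that $F_0$ together with a single twisted band really yields Seifert surfaces for $K_+$ and $K_-$, that the off-diagonal Seifert pairings with the new generator $\ga$ are genuinely unaffected by the crossing change, and that the corner self-linking jumps by exactly $\pm1$. Unlike the classical case one cannot appeal to $V^+=(V^-)^\top$, so each of these linking numbers must be checked directly from $\al^\pm(x,y)=\beta(x^\pm,y)$; the vanishing of $\ga\cdot\ga$ and the fact that the twist is supported near the crossing are exactly the inputs that keep the two Seifert matrices equal away from the single corner entry.
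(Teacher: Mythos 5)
Your proposal is correct and is, in essence, the paper's own proof: the paper's entire argument is the single sentence that the theorem ``is proved by the same argument as for Lemma 3 in \cite{Giller}'', and what you have written out is exactly that Seifert-matrix argument --- a common surface for the skein triple, block Seifert matrices differing in a single corner entry, and a one-entry cofactor expansion --- transplanted to the thickened-surface setting with the right adaptations: since $V^+\neq (V^-)^\top$ here, you check directly that the off-diagonal blocks agree for $K_+$ and $K_-$ (locality of the twist) and that the corner entries of $V^-$ and $V^+$ coincide via $\al^+(x,y)-\al^-(x,y)=\pm\, x\cdot y$ (which holds in $\Si\times I$ just as classically, because the normal annulus over a cycle of $F$ misses $\Si\times 1$), and your normalization $\det(tV^- - t^{-1}V^+)$ is the one for which the stated factor $(t-t^{-1})$ comes out exactly (the paper's Definition \ref{Alex-def} with $\det(tV^--V^+)$ would instead give $(t-1)$). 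The one genuine omission is the degenerate case in which the band $b$ attaches to two distinct components of $F_0$, e.g.\ when smoothing the crossing disconnects the diagram: then there is no new generator $\ga$ and your block structure does not exist, so one must argue separately that the Seifert matrices of $K_+$ and $K_-$ coincide (hence $\De_{K_+}(t)=\De_{K_-}(t)$) while $\De_{K_0}(t)=0$, the latter because a connected Seifert surface for $K_0$ is obtained by tubing the components of $F_0$ together and the belt circle of the tube produces a zero row in $tV_0^- - t^{-1}V_0^+$; with that case added, both sides of the skein relation vanish there and the proof is complete.
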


\begin{proof} 
This result is proved by the same argument as for Lemma 3 in \cite{Giller}.
 \end{proof}

\begin{figure}[ht]
\centering
\includegraphics[scale=0.90]{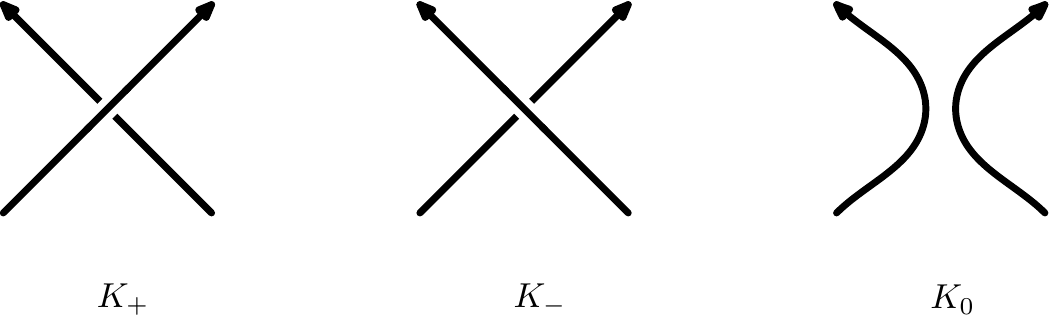}
\caption{The skein triple $K_+, \, K_-$ and $K_0.$} 
\label{skein}
\end{figure}    

\begin{lemma} \label{lem-odd}
If $K$ is an almost classical knot, then  $\De_K(-1)$ is odd. 
\end{lemma}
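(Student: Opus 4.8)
The plan is to evaluate the determinant expression for $\De_K(t)$ at $t=-1$, factor out the scalar, and then reduce modulo $2$, at which point Lemma \ref{lem-standard} finishes the argument. Throughout, $K$ is a knot, so $n=1$ and the Seifert matrices $V^\pm$ from Definition \ref{Alex-def} are square of size $2g(F)+n-1 = 2g(F)$, which is even.

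First I would substitute $t=-1$ into $\De_K(t) = \det(t V^- - V^+)$ to obtain $\De_K(-1) = \det\big(-(V^- + V^+)\big)$. Since the matrices have even size $2g(F)$, pulling the scalar $-1$ through the determinant costs a factor $(-1)^{2g(F)} = 1$, so $\De_K(-1) = \det(V^- + V^+)$.

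Next I would pass to coefficients in $\ZZ/2$. There $V^+ \equiv -V^+$, hence $V^- + V^+ \equiv V^- - V^+ \pmod 2$, and therefore $\det(V^- + V^+) \equiv \det(V^- - V^+) \pmod 2$. Lemma \ref{lem-standard} gives $\det(V^- - V^+) = 1$ in the knot case, so $\De_K(-1) \equiv 1 \pmod 2$; that is, $\De_K(-1)$ is odd.

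The argument is elementary, and the only thing to watch is the bookkeeping: one must verify that the Seifert matrices have even size for a single-component $K$ so that the factored scalar contributes no sign, and one must invoke the $n=1$ branch of Lemma \ref{lem-standard} rather than the link case. I do not anticipate any substantive obstacle beyond keeping these sign conventions straight.
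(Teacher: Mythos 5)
Your proof is correct, but it takes a genuinely different route from the paper's. You argue directly from the Seifert-matrix definition: evaluating $\De_K(t)=\det(tV^--V^+)$ at $t=-1$, absorbing the scalar $-1$ (harmless since the matrices have even size $2g(F)$ when $n=1$; in fact even an odd size would only change the sign, which cannot affect parity), and reducing mod $2$ so that $\det(V^-+V^+)\equiv\det(V^--V^+)=1$ by the $n=1$ case of Lemma \ref{lem-standard}. This is exactly the classical argument that the knot determinant is odd, transplanted to the thickened-surface setting, and it is self-contained given Lemma \ref{lem-standard} and the existence of a Seifert surface (Theorem \ref{thm-TFAE}). The paper instead deduces the result from the skein relation: Theorem \ref{Skein-Theorem} shows $\De_K(-1) \bmod 2$ is unchanged by crossing changes, crossing changes can make the diagram ascending, ascending virtual diagrams are welded trivial, and welded trivial knots have trivial Alexander polynomial, whence $\De_K(-1)\equiv 1 \pmod 2$. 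Your approach buys elementarity and avoids the skein machinery and the ascending-diagram/welded-triviality facts; the paper's approach buys the extra information that $\De_K(-1)\bmod 2$ is a crossing-change invariant, which is of independent interest and fits the paper's emphasis on skein-theoretic structure for almost classical knots. Both arguments are sound.
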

\begin{proof} The skein formula of Theorem \ref{Skein-Theorem} shows that $\De_K(-1) \mod 2$ is independent under crossing changes. However, by changing the crossings of $K$, we can alter the diagram so that $K$ is ascending. But any ascending virtual knot diagram is necessarily welded trivial, and any welded trivial knot has trivial Alexander polynomial. Thus $\De_K(-1) \equiv 1 \mod 2$ for any almost classical knot $K$, and this shows that $\De_K(-1)$ is odd.
\end{proof}

We now define the determinant of an almost classical knot or link. In case $K$ is an almost classical knot, Lemma \ref{lem-odd} shows that $\det(K)$ is odd.
In \cite{SW-06}, Silver and Williams define the determinant of a long virtual knot using Alexander group systems.

\begin{definition}
The \emph{determinant} of an almost classical link $L$ is given by
setting $\det(L)=| \De_L(-1)|$. Thus $\det(L) \in \ZZ$, and if $K$ an almost classical knot, then $\det(K)$ is  odd.
\end{definition}

\begin{remark}
In the classical case, the determinant of a link $L$ satisfies $\det(L) = | H_1(X_2)|,$ where $X_2$ denotes the double cover of $S^3$ branched along $L$, see Corollary 9.2 of \cite{Lickorish}. (Note that $\det(L)=0$ means $H_1(X_2)$ is infinite). For knots,  $\det(K)$ is odd thus $H_1(X_2)$ is always finite of odd order in that case. 

There is a similar story for almost classical knots and links. Suppose $L$ is an almost classical link, which we view as a link in $Z = \Si \times I/\Si \times 1$, the space obtained by collapsing one component of the boundary of $\Si \times I$ to a point. If $F$ is a Seifert surface for $L$ in $\Si \times I$ and $V^\pm$ are the two Seifert matrices, then one can show that $V^+ + V^-$ is a presentation matrix for $H_1(X_2)$, where $X_2$ denotes the double cover of $Z$ branched along $L$. In particular, it follows that $\det(L) = | H_1(X_2)|$, and  Lemma \ref{lem-odd} implies that $H_1(X_2)$ is finite of odd order in the case $L=K$ is an almost classical knot. 
\end{remark}

In  \cite{Nakamura-et-al} the authors use the Alexander numbering of $K$ to prove that the first elementary ideal $\cE_1$ is principal. Their proof extends to mod $p$ almost classical knots and links for $p \geq 2$ after making a change of rings which we now explain. Let $\zeta_p = e^{2\pi i/p}$ be a primitive $p$-th root of unity and consider
the change of rings $\phi\colon \ZZ[t^{\pm 1}] \to \ZZ[\zeta_p]$ given by setting  $\phi(t) = \zeta_p$.

Given a left $\ZZ[t^{\pm 1}]$-module $M$, we obtain the left $\ZZ[\zeta_p]$-module  $M'=  \ZZ[\zeta_p] \otimes_{\ZZ[t^{\pm 1}]} M$ where the right action of $\ZZ[t^{\pm 1}]$ on $\ZZ[\zeta_p]$ is given by $r\cdot a = r \phi(a)$
for $a \in \ZZ[t^{\pm 1}]$ and $r \in  \ZZ[\zeta_p]$.
Note that if $M$ is finitely presented and $\left(a_{ij} \right)$ is a presentation matrix for $M$ then $\left(\phi(a_{ij}) \right)$ is a presentation matrix for  $M'$.

Thus, if $K$ is a mod $p$ almost classical knot, $p \geq 2$, then we  define its Alexander ``polynomial'' to be an element $\De_K(\zeta_p) \in \ZZ[\zeta_p]$ with $\cE_1 = (\De_K(\zeta_p))$ over $\ZZ[\zeta_p]$.
As an invariant of mod $p$ almost classical knots, $\De_K(\zeta_p)$ is well-defined up to units in $\ZZ[\zeta_p],$ and it is an invariant of the welded equivalence class of $K$.  

The next proposition is the mod $p$ analogue of Corollary \ref{cor-AC}, and it follows by a straightforward generalization of the proof of Theorem 1.2 in \cite{Nakamura-et-al}. 
\begin{proposition} Suppose $p \geq 2$ and 
that $K$ is a mod $p$ almost classical knot or link. Then the first elementary ideal $\cE_1$ is principal over the ring $\ZZ[\zeta_p]$. \qed
\end{proposition}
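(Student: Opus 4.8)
The plan is to mimic the classical argument of Nakamura, Nakanishi, Satoh, and Tomiyama from \cite{Nakamura-et-al}, but to carry it out over the ring $\ZZ[\zeta_p]$ after applying the change of rings $\phi\colon \ZZ[t^{\pm 1}] \to \ZZ[\zeta_p]$, $\phi(t) = \zeta_p$, described just above. Recall that the first elementary ideal $\cE_1$ of the knot group $G_K$ is generated by the $(n-1)\times(n-1)$ minors of the $n \times n$ Alexander matrix $A$ obtained by Fox differentiating a Wirtinger presentation of $G_K$ and setting $a_1, \ldots, a_n = t$. By the fundamental identity of Fox derivatives, the row sums of $A$ all vanish, so $\det A = 0$ and every column is a $\ZZ[t^{\pm 1}]$-linear combination of the others; to show that $\cE_1$ is principal, the idea of \cite{Nakamura-et-al} is to produce a nonzero element in the \emph{left} kernel of $A$, which then forces the various maximal minors to be proportional.

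First I would fix a diagram of $K$ realizing a mod $p$ Alexander numbering, and label the arcs $a_1, \ldots, a_n$ with Alexander numbers $\la_1, \ldots, \la_n \in \ZZ/p$ as in Definition \ref{Mod-p-Def} and Figure \ref{Alexander-Numbering-Def}. The key observation of Nakamura et al.\ is that the Alexander numbering produces an explicit left-kernel vector for $A$: one takes the row vector whose $i$-th entry is (a power of) $t^{\la_i}$, weighted by the numbering, and checks crossing by crossing that left-multiplying $A$ by this vector annihilates it. In the classical ($p=0$) case the entries are genuine powers $t^{\la_i} \in \ZZ[t^{\pm 1}]$; here, after applying $\phi$, the entries become $\zeta_p^{\la_i} \in \ZZ[\zeta_p]$, which makes sense precisely because the numbering is only defined mod $p$ and $\zeta_p$ has order $p$. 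Thus the vector is well-defined over $\ZZ[\zeta_p]$, and the same local crossing computation as in \cite{Nakamura-et-al} shows it lies in the left kernel of $\phi(A)$.

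Second, I would invoke the standard linear-algebra consequence. If an $n \times n$ matrix $\bar A = \phi(A)$ over a commutative ring admits a left-kernel vector $w = (w_1, \ldots, w_n)$ at least one of whose entries is a unit, then the rows of $\bar A$ satisfy a linear dependence with a unit coefficient, and one can express any one row as an $R$-combination of the others. A Laplace/cofactor computation then shows that all the $(n-1)\times(n-1)$ minors are, up to unit multiples coming from the $w_i$, associates of a single one of them; concretely, the cofactors along a fixed row are proportional via the ratios $w_i/w_j$. Since each $w_i = \zeta_p^{\la_i}$ is a unit in $\ZZ[\zeta_p]$, these ratios are units, and hence $\cE_1$, being generated by all the $(n-1)\times(n-1)$ minors of $\bar A$, is generated by any single nonzero maximal minor — that is, $\cE_1$ is principal over $\ZZ[\zeta_p]$. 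This generator is exactly the element we call $\De_K(\zeta_p)$.

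The main obstacle I anticipate is not the linear algebra but the verification that the Alexander-numbering vector is genuinely a left-kernel vector after the change of rings, and that the argument is insensitive to the fact that the numbering is only defined modulo $p$. Concretely, one must confirm that each Wirtinger relation $a_{i+1} = a_j^{\ep_i} a_i a_j^{-\ep_i}$, after Fox differentiation, abelianization ($a_k \mapsto t$), and the substitution $\phi(t) = \zeta_p$, is annihilated by the weighted vector — and this relies on the local numbering condition of Figure \ref{Alexander-Numbering-Def} holding mod $p$ rather than on the nose. The crux is therefore checking that the single ``jump by one'' relation among adjacent Alexander numbers translates into the factor $\zeta_p$ versus $\zeta_p^{\pm 1}$ in the kernel condition, which is exactly where the well-definedness of $\zeta_p^{\la_i}$ modulo $p$ is used; once this local computation is carried out it matches the proof of Theorem 1.2 of \cite{Nakamura-et-al} verbatim, which is why the proposition is asserted to follow by a straightforward generalization.
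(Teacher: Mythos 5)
Your proposal is correct and is essentially the paper's own argument: the paper proves this proposition by citing the change of rings $\phi\colon \ZZ[t^{\pm 1}] \to \ZZ[\zeta_p]$, $\phi(t)=\zeta_p$, and asserting that the left-kernel construction from the Alexander numbering in Theorem 1.2 of \cite{Nakamura-et-al} generalizes straightforwardly, which is exactly the argument you spell out (the vector with unit entries $\zeta_p^{\la_i}$ is well-defined precisely because the numbering is only defined mod $p$, and the cofactor proportionality then gives principality).
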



\section{Parity and projection} \label{section8}
In this section, we introduce parity functions and the associated projection maps, and we explain their relationship with mod $p$ almost classical knots. Parity is a powerful tool with far-reaching implications, and we only give a brief account of it here. For more details, we refer the reader to Manuturov's original article \cite{M11}, his book \cite{MI-State}, and the monograph \cite{IMN-11}.

Given a virtual knot diagram, a parity is a function that assigns to each classical crossing a value in $\{0,1\}$ such that the following axioms hold: 

\begin{enumerate} 
\item In a Reidemeister one move, the parity of the crossing is even.
\item In a Reidemeister two move, the parities of the two crossings are either both even or both odd.
\item In a Reidemeister three move, the parities of the three crossings are unchanged. Further, the three crossings can be all even, all odd, or one even and two odd. (I.e. we exclude the case that one crossing is odd and two are even.) 
\end{enumerate}

Note that this notion is referred to as ``parity in the weak sense" by Manturov \cite{Manturov-12}.

For example, recall the definition of the index $I(c_i)$ of a chord in a Gauss diagram from Definition \ref{chord-index} and define
$$f(c_i)= \begin{cases} 0 & \text{if $I(c_i) \equiv 0$ mod $p$,}\\
1 & \text{if $I(c_i) \not\equiv 0$ mod $p$.}
\end{cases}$$
When $p=0$, then $f(c_i)$ is called the Gaussian parity function.
The proof that this is a parity follows from the properties of the index.
Observe that $I(c_i)=0$ for any chord involved in a Reidemeister one move, that $I(c_i)=-I(c_j)$ for any two chords involved in a Reidemeister two move, and that, for any three chords   involved in a Reidemeister three move,  $I(c_i),I(c_j),$ and $I(c_k)$ are unchanged  and satisfy $I(c_i)+I(c_j) = I(c_k)$. These observations imply that the function $f$ defined above satisfies the parity axioms.

Here we remark that in general there are many parity functions that can be defined on Gauss diagrams, but the parity function $f$ described above is ideally suited to studying mod $p$ almost classical knots, as we now explain. 

Given a virtual knot diagram $D$, we say that a chord $c$ is \emph{even} if $f(c) = 0$ and \emph{odd} if $f(c) \neq 0$. We will define a map  $$P_f \colon \{ \text{Gauss diagrams} \} \lto \{\text{Gauss diagrams}\}$$
called the \emph{Manturov projection map} as its definition was inspired by \cite{Manturov-12}
as follows.  If $D$ has only even chords, then $P_f(D)=D.$ Otherwise, if it has odd chords, then let $P_f(D)$ be the Gauss diagram $D\sm \{c \mid f(c) =1\}$ obtained by removing the odd chords. Note that there can be even chords in $D$ that become odd in $P_f(D).$  In terms of the virtual knot diagram, one removes odd crossings by virtualizing them.
Notice that a diagram $D$ admits a mod $p$ Alexander numbering if and only if $P_f(D) = D.$ Thus a virtual knot $K$ is mod $p$ almost classical precisely when it has a diagram that is fixed under $P_f.$

For example, Figure \ref{Man-proj} shows two Gauss diagrams. The one on the left has six chords, exactly two of which are odd with respect to the Gaussian parity. Under projection, it is sent to the almost classical knot $4.99$ whose Gauss diagram appears on the right.

\begin{figure}[ht]
 
\includegraphics[scale=0.70]{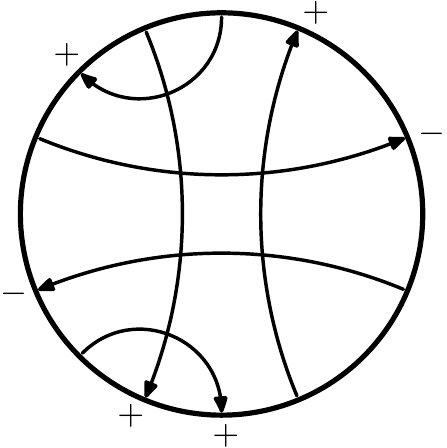} \qquad   \qquad
\includegraphics[scale=0.70]{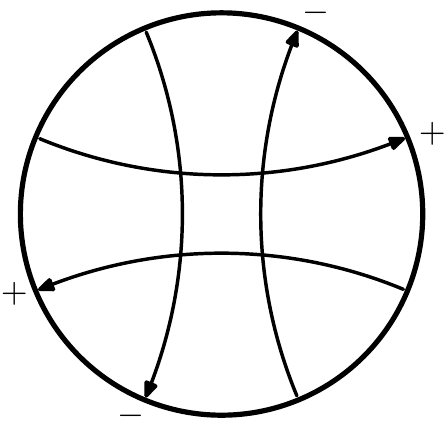} \hspace{0.2cm}  
\caption{Manturov projection $P_f$ sends the diagram on the left to the one on the right.} 
\label{Man-proj}
\end{figure}

The following lemma is crucial and implies that $P_f$ is well-defined as a map on virtual knots.

\begin{lemma}\label{Lemma-Projection}
The Manturov projection map respects Reidemeister equivalence, i.e., if $D$ is Reidemeister equivalent to $D'$, then $P_f(D)$ is Reidemeister equivalent to $P_f(D')$.
\end{lemma}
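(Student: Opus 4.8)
The plan is to verify that the Manturov projection $P_f$ commutes with each of the generalized Reidemeister moves up to Reidemeister equivalence, since these moves generate the equivalence relation on Gauss diagrams. Because a single move changes $D$ only locally (adding, removing, or rearranging one, two, or three chords), it suffices to analyze how the parity values of the affected chords, and the parity values of the chords they intersect, are altered, and then to check that removing the odd chords before and after the move yields Reidemeister-equivalent diagrams. The essential input is the index computation already recorded in Section \ref{section8}: in an $r1$ move the involved chord has index $0$ (hence is even, hence survives), in an $r2$ move the two chords have opposite indices (hence the same parity, so they are either both deleted or both kept), and in an $r3$ move the three indices are unchanged and satisfy $I(c_i)+I(c_j)=I(c_k)$.

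First I would dispose of the virtual moves $(v1)$--$(v4)$, which do not alter any chords and hence trivially commute with $P_f$. Next I would treat the $r1$ move: the new chord is even, so it is retained by $P_f$, and since it is a kink it does not intersect any other chord, its addition cannot change the parity of existing chords; thus $P_f(D')$ differs from $P_f(D)$ by exactly the same $r1$ move. For the $r2$ move the two new chords have opposite indices and therefore equal parity. If both are even, $P_f$ keeps them and the two diagrams differ by an $r2$ move; if both are odd, $P_f$ deletes them and we must check that the deletion of the rest of the chords is unaffected. The key point here is that the two $r2$ chords are parallel and intersect exactly the same set of remaining chords with opposite signs, so deleting them does not alter the index (hence parity) of any surviving chord. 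The $r3$ move is the most delicate: the three chords keep their indices, but one must confirm that whichever subset is deleted by parity produces the same projected diagram on both sides, using the relation $I(c_i)+I(c_j)=I(c_k)$ to see that the admissible parity patterns (all even, all odd, or one even and two odd) are exactly those permitted by the parity axioms, and that in each case the surviving chords undergo a genuine Reidemeister move or no move at all.

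The main obstacle I expect is the $r3$ case, specifically the bookkeeping needed to show that after selectively deleting odd chords the two projected diagrams are related by an allowed move rather than merely by an abstract chord diagram coincidence. One must handle the subcases separately: when all three crossings survive, $P_f(D)$ and $P_f(D')$ differ by an honest $r3$ move; when exactly the two odd crossings are deleted, the single surviving even crossing is in the same position on both sides and an $r3$ move degenerates to a planar isotopy or a detour move on the projected diagram; and when all three are deleted, the relation $I(c_i)+I(c_j)=I(c_k)$ must be used to confirm that the chords crossing the triple do so in a parity-preserving way so that no spurious change of parity is introduced elsewhere. Throughout, the recurring subtlety flagged in the text---that an even chord of $D$ may become odd in $P_f(D)$---forces one to argue that this phenomenon occurs identically on both $D$ and $D'$, so that the final projected diagrams remain Reidemeister equivalent. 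Once these local verifications are assembled, the lemma follows, and with it the well-definedness of $P_f$ as a map on virtual knots.
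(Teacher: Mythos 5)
Your proposal is correct and follows essentially the same route as the paper's proof: reduce to a single Reidemeister move, use the index facts (an $r1$ chord has index $0$, the two $r2$ chords have opposite indices and do not affect other chords' indices, and $r3$ preserves all indices with $I(c_i)+I(c_j)=I(c_k)$), and then check case by case that the projected diagrams are either related by the same move or literally equal. The paper's treatment of the $r3$ subcases is slightly simpler than yours---when two or three of the chords are odd it just observes $P_f(D)=P_f(D')$ outright, since deleting those chords leaves identical Gauss diagrams---but this is a matter of bookkeeping, not substance.
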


\begin{proof}
It is enough to prove the statement in the special case where $D$ and $D'$ are related by just one Reidemeister move.

Assume first that $D$ and $D'$ are related by a Reidemeister one move.  By the parity axioms, the chord involved in the Reidemeister one move is even. Further, we see that this chord move does not intersect any of the other chords of the Gauss diagram.  This means that $P_f(D)$ is identical to $P_f(D')$ outside of the local neighbourhood of the Reidemeister move. Thus $P_f(D)$ is related to $P_f(D')$ by a Reidemeister one move.  

Now assume that $D$ and $D'$ are related by a Reidemeister two move. Thus, two chords of opposite sign are added. Although the new chords may intersect other chords of the diagram, because they have opposite signs, they do not change the indices of any other chords. This means that $P_f(D)$ is identical to $P_f(D')$ outside of the local neighbourhood of the Reidemeister two move. 

The parity axioms imply that the two chords involved in the Reidemeister two move are either both even or both odd. If they are both even, then 
 $P_f(D)$ is related to $P_f(D')$ by a Reidemeister two move.
 If they are both odd, then we have
 $P_f(D)=P_f(D')$.

Finally, suppose $D$ and $D'$ are related by a Reidemeister three move.
Reidemeister three moves do not change the indices of any the chords in the diagram. By the parity axioms, either all three chords are even, or all three are odd, or exactly two of them are odd. If all three chords are even,  then $P_f(D)$ is related to $P_f(D')$ by a Reidemeister three move. If all three chords are odd or if exactly two chords are odd, then we have $P_f(D) = P_f(D').$ 
\end{proof}

The next result  implies that the set of mod $p$ almost classical knots and links form a self-contained knot theory under the generalized Reidemeister moves. It is a consequence of Lemma \ref{Lemma-Projection} and repeated application of the projection $P_f$. 

\begin{proposition}\label{AC-Projection}
Let $D$ and $D'$ be two Gauss diagrams that both admit mod $p$ Alexander numberings.  If $D$ is Reidemeister equivalent to $D'$, then they are related by a sequence of  Reidemeister moves through Gauss diagrams that admit mod $p$ Alexander numberings.
\end{proposition}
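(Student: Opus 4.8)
The plan is to push the given Reidemeister equivalence down to the level of diagrams fixed by the projection $P_f$, iterating $P_f$ as needed. First I would record that, by the observation preceding Lemma~\ref{Lemma-Projection}, a Gauss diagram admits a mod $p$ Alexander numbering precisely when it is a fixed point of $P_f$; in particular $P_f(D) = D$ and $P_f(D') = D'$. Write the hypothesized equivalence as a chain $D = D_0 \to D_1 \to \cdots \to D_N = D'$ in which each arrow is a single Reidemeister move. The goal is to produce a new chain, with the same endpoints, each of whose diagrams is fixed by $P_f$.

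The engine is a sharpened form of Lemma~\ref{Lemma-Projection}. Its proof in fact establishes more than Reidemeister equivalence: a single Reidemeister move relating $E$ and $E'$ induces \emph{at most one} Reidemeister move relating $P_f(E)$ and $P_f(E')$ (for $r1$ one gets an $r1$ move; for $r2$ and $r3$ one gets either a single move of the same type or the equality $P_f(E) = P_f(E')$, according to the parities of the chords involved). Using this, I would prove by induction on $j$ that for every $j \ge 0$ and every $i$ the diagrams $P_f^{\,j}(D_i)$ and $P_f^{\,j}(D_{i+1})$ are related by at most one Reidemeister move: the base case $j = 0$ is the hypothesis, and the inductive step applies the sharpened lemma to the pair $P_f^{\,j}(D_i)$, $P_f^{\,j}(D_{i+1})$, the case where they are already equal being trivial.

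Next I would verify that the iteration stabilizes uniformly. The point is that $P_f$ is \emph{not} idempotent: deleting the odd chords can change the index of a surviving chord and thereby turn a previously even chord odd, so a single projection need not land among the numberable diagrams. However, whenever a diagram has an odd chord, $P_f$ strictly decreases the number of chords, so for each $i$ there is an integer $N_i$, bounded by the number of chords of $D_i$, with $P_f^{\,N_i}(D_i)$ a fixed point of $P_f$. Since the chain is finite, I may set $k = \max_i N_i$; then $P_f^{\,k}(D_i)$ is fixed by $P_f$, hence mod $p$ Alexander numberable, for every $i$.

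Finally I would assemble the chain. Because $D$ and $D'$ are already fixed by $P_f$, we have $P_f^{\,k}(D_0) = D$ and $P_f^{\,k}(D_N) = D'$. By the induction of the second step, consecutive terms $P_f^{\,k}(D_i)$ and $P_f^{\,k}(D_{i+1})$ are related by at most one Reidemeister move, and by the third step every term is numberable. Concatenating yields
$$
D = P_f^{\,k}(D_0) \to P_f^{\,k}(D_1) \to \cdots \to P_f^{\,k}(D_N) = D',
$$
a sequence of Reidemeister moves through mod $p$ Alexander numberable Gauss diagrams, as required. I expect the main obstacle to be precisely the failure of idempotence addressed in the third step: one must organize the repeated projection so that it terminates simultaneously for all $D_i$ while the single-move control of the second step is preserved at every stage. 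Once both are in hand, the concatenation is immediate.
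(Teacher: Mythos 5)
Your proof is correct and takes essentially the same approach as the paper, whose entire argument is the one-line remark that the proposition ``is a consequence of Lemma \ref{Lemma-Projection} and repeated application of the projection $P_f$.'' Your write-up simply supplies the details that remark leaves implicit: the single-move refinement of Lemma \ref{Lemma-Projection} (which its proof does establish), the uniform stabilization exponent $k$ needed because $P_f$ is only eventually idempotent, and the final concatenation through fixed points of $P_f$.
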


For a virtual knot $K$ represented by a diagram $D$, we let $P_f(K)$ be the virtual knot represented by the diagram $P_f(D)$. Lemma \ref{Lemma-Projection} implies that this map is well-defined as a map of virtual knots.
Notice further that $P_f(K) =K$ if and only if  $K$ is a mod $p$ almost classical knot.

Since $P_f^2 \neq P_f,$ it is not idempotent and so $P_f$ is not a projection in the strict sense.
However, for any given virtual knot $K$, there is an integer $n$ such that $P_f^{n+1}(K) = P_f^{n}(K).$ Thus, $P_f$ is eventually idempotent. 
In that case, it follows that $P_f^n(K)$ is mod $p$ almost classical, thus every virtual knot eventually projects to a mod $p$ almost classical knot under $P_f$. 
Thus, associated to every virtual knot $K$ is a canonical mod $p$ almost classical knot $K' = P_f^n(K),$ and using this association we see that any invariant of mod $p$ almost classical knots can be lifted to give an invariant of virtual knots.

For example, one can extend Definition \ref{Alex-def} to all virtual knots $K$ by defining $\De_K(t) := \De_{K'}(t),$
where $K'=P_f^n(K)$ is the image of $K$ under stable Manturov projection. (Here $f$ denotes the usual Gaussian parity, which implies that $K'$ is almost classical for $n$ sufficiently large.) 

Let $\cV$ denote the set of all virtual knots and for $n\ge 0,$ define 
$$\cV^p_n = \{ K \in \cV \mid P_f^n(K) \text{ is a mod $p$ almost classical knot} \}.$$
This gives an ascending filtration 
\begin{equation} \label{V-filtration}
\cV^p_0 \subset \cV^p_1 \subset \cdots \subset \cV^p_{n-1} \subset \cV^p_n \subset  \cdots  \cV,
\end{equation}
on the set $\cV$ of all virtual knots with the bottom $\cV^p_0$ consisting of the mod $p$ almost classical knots. We say that $K \in \cV$ lies in level $n$ if $K \in \cV_n^p \sm \cV^p_{n-1}$. 

For example, let $p=0$ and consider the projections of the knot $K=6.3555$, whose Gauss diagram is on the left of Figure \ref{V-step}. It has two odd chords, and removing them gives the Gauss diagram for 4.9, in the middle of Figure \ref{V-step}. This knot also has two odd chords, and removing them gives the Gauss diagram on the right of Figure \ref{V-step}, which we note is 2.1, hence non-trivial. Since the next projection $P_f^3(K)$ is trivial, it follows that $K \in \cV_3\sm \cV_2$. Thus $K$ lies in level 3. 

\begin{figure}[ht]
 
\includegraphics[scale=0.70]{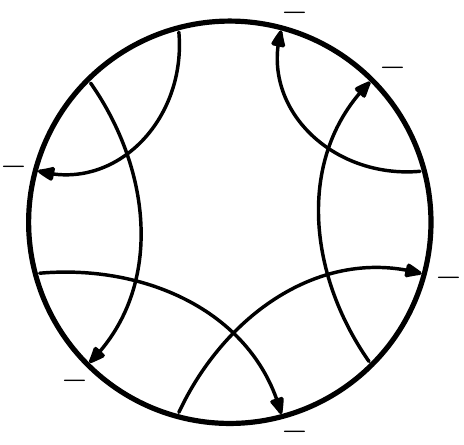} \qquad
\includegraphics[scale=0.70]{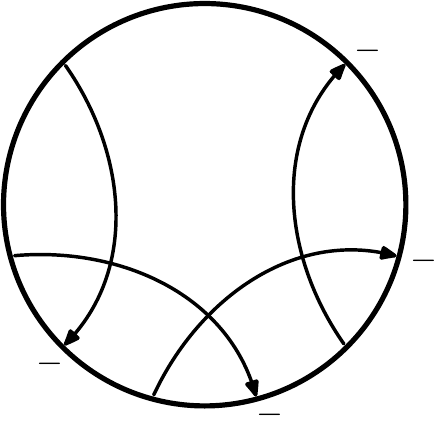} \qquad  
\includegraphics[scale=0.70]{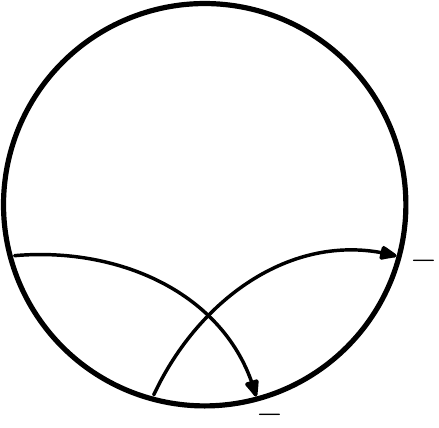} 
\caption{The Gauss diagrams of $K=6.3555$, $P_f(K)=4.9$, and $P_f^2(K)=2.1.$} 
\label{V-step}
\end{figure}

Manturov's projection map is especially useful for studying almost classical knots because it implies that any minimal crossing diagram for a mod $p$ almost classical knot
is mod $p$ Alexander numberable.
This observation is stated in the next theorem and is key to our tabulation of mod $p$ almost classical knots up to $6$ crossings in Section \ref{section9}.

\begin{theorem}\label{AC-Minimal}
If $K$ is a mod $p$ almost classical knot, then any minimal crossing diagram for $K$ is mod $p$ Alexander numberable.
\end{theorem}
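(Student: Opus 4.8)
The plan is to argue by contradiction, exploiting the fact that the Manturov projection $P_f$ can only decrease the number of classical crossings, together with the characterization (noted just before the theorem) that a virtual knot $K$ satisfies $P_f(K) = K$ precisely when it is mod $p$ almost classical. The point is that if a minimal diagram $D$ for $K$ had an odd chord, then $P_f(D)$ would be a diagram for the \emph{same} knot with strictly fewer classical crossings, violating minimality.

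In detail, I would first fix a minimal crossing diagram $D$ for $K$ and pass to its Gauss diagram, so that the classical crossings of $D$ correspond bijectively to the chords; thus the number of chords equals the minimal classical crossing number of $K$. Next I would apply $P_f$: by definition $P_f(D)$ is obtained by deleting every odd chord, i.e.\ every chord $c$ with $I(c) \not\equiv 0 \pmod p$, and realizing the resulting Gauss diagram as a virtual diagram (virtualizing the deleted crossings). Consequently $P_f(D)$ has exactly as many chords as $D$ has even chords, which is strictly fewer than the number of chords of $D$ whenever $D$ possesses at least one odd chord.

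The key input is that $P_f$ is well-defined on virtual knots by Lemma \ref{Lemma-Projection}, so the virtual knot represented by $P_f(D)$ is $P_f(K)$. Since $K$ is mod $p$ almost classical, $P_f(K) = K$, and therefore $P_f(D)$ is itself a diagram for $K$. If $D$ had an odd chord, then $P_f(D)$ would realize $K$ with strictly fewer classical crossings, contradicting the minimality of $D$. Hence $D$ has no odd chords, meaning $I(c) \equiv 0 \pmod p$ for every chord $c$; by the criterion recalled after Definition \ref{Mod-p-Def}, this is exactly the condition that $D$ be mod $p$ Alexander numberable.

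The main obstacle---really the only subtle point---is ensuring the crossing count genuinely drops. This requires interpreting ``minimal crossing'' as the minimal number of classical crossings (equivalently, chords) and invoking that an abstract Gauss diagram with $k$ chords is realized by a virtual diagram with exactly $k$ classical crossings, so that deleting a chord strictly lowers the classical crossing number. Everything else is formal once the equivalence $P_f(K)=K \Leftrightarrow K$ mod $p$ almost classical and the well-definedness of $P_f$ are in hand.
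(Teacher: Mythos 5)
Your proposal is correct and follows essentially the same argument as the paper: both use Lemma \ref{Lemma-Projection} (well-definedness of $P_f$ up to Reidemeister equivalence) together with the fact that a mod $p$ almost classical knot is fixed by $P_f$, and then conclude from minimality that $P_f(D)=D$ since $P_f$ can only remove chords, forcing every chord of $D$ to be even. The only difference is cosmetic: you phrase it as a contradiction and cite $P_f(K)=K$ directly, while the paper argues via an auxiliary Alexander-numberable diagram $D'$ with $P_f(D')=D'$ --- but that is precisely how the identity $P_f(K)=K$ is justified in the first place.
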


\begin{proof} Let $K$ be a mod $p$ almost classical knot and suppose that $D$ is a minimal diagram for $K$.  Let $D'$ be a Gauss diagram of $K$ that has a mod $p$ Alexander numbering.  Since $D$ and $D'$ both represent the same knot, they are Reidemeister equivalent.   Since $D'$ has an mod $p$ Alexander numbering, $P_f(D')= D'$.  By Lemma \ref{Lemma-Projection}, $P_f(D)$ is Reidemeister equivalent to $P_f(D') = D'$.  Thus $P_f(D)$ is Reidemeister equivalent to $D$. Since $D$ is minimal, and since the projection map $P_f$ can only remove chords, it follows that $P_f(D) = D$.  Thus all chords of $D$ are even, and this implies that $D$ also admits a mod $p$ Alexander numbering. 
\end{proof}

For example, we apply this to Kishino's knot in Figure \ref{Kishino} with $p=2$ to see that no diagram for this knot admits a  mod 2 Alexander numbering. This resolves a question raised in Remark 3.11 of  \cite{carteretal}. 

\begin{figure}[ht]
\centering
\includegraphics[scale=1.60]{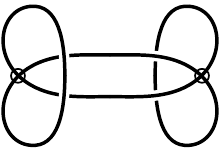}
\caption{The minimal crossing diagram for Kishino's knot is not mod $2$ Alexander numberable, thus no diagram of it is mod $2$ Alexander numberable.}
\label{Kishino}
\end{figure}


\section{Enumeration of almost classical knots}\label{section9}

In \cite{green}, Green has classified virtual knots up to six crossings. Since the virtual knots in his tabulation all have minimal crossing diagrams, by Proposition \ref{AC-Minimal}, to check whether a given virtual knot is mod $p$ almost classical, it is enough to check which diagrams in Green's table admit mod $p$ Alexander numberings. This is done by computer by computing the indices of all the chords in the Gauss diagram and seeing whether they are all equal to zero mod $p$. These calculations were performed using Matlab.  For brevity, Table \ref{Numbers-AC} includes only the numbers of mod $p$ almost classical knots for $p=0,2,3,4,5$, and we also give a complete enumeration of the  almost classical knots up to six crossings, along with computations of their Alexander polynomials $\De_K(t)$ and their virtual genus $g(\Si_K)$ in Table \ref{acks2}. Recall that $\Si_K$ is the Carter surface associated to $K$, and here we make use of Corollary 1 in \cite{Manturov-12}, which states that if $K$ is a minimal crossing diagram, then its Carter surface $\Si_K$ has minimal genus. The proof is similar in spirit to the proof of Theorem \ref{AC-Minimal}, though in the place of the parity function $f$ above, Manturov's argument uses homological parity.

Interestingly, every almost classical knot in Table \ref{acks2} with trivial Alexander polynomial also has trivial knot group, and a case-by-case argument reveals that each one of them is in fact welded trivial. 

Based on the low crossing examples, it is tempting to conjecture that almost classical knots $K$ have isomorphic upper and lower knot groups. We say that $K$ satisfies the \emph{twin group property} if $G_K$ and $G_{K^*}$ are isomorphic. Here $K^*$ is the vertical mirror image of $K$, which is the knot obtained from $K$ by switching all the real crossings. In terms of Gauss diagrams, it is obtained by reversing all the chords and changing their signs. 

Classical knots satisfy the twin group property, and this follows easily by interpreting the knot group $G_K$ as the fundamental group  $\pi_1(S^3 \sm \tau K)$ of the knot complement. Using the tabulation of almost classical knots, one can further check that the four-crossing and five-crossing almost classical knots also satisfy the twin group property.  However, there are counterexamples among the six-crossing almost classical knots. The first is the almost classical knot $K =6.73053,$ which has
\begin{eqnarray*}
G_K &=&  \langle a, b \mid a^{-1}b^{-1}a^{-1}bab \rangle \\
G_{K^*} &=& \langle  a, b \mid a^{-1}b^{-1}a^{-1}bab, a^3b^{-3}aba^{-2}b^2a^{-1}b^{-1}\rangle.
\end{eqnarray*}
Notice that the first relation for these groups is identical, and by counting representations into the symmetric group $S_4$ on four letters, one can show that $G_K \not\cong G_{K^*}$.
There are several other almost classical knots with $G_K \not\cong G_{K^*}$, and they include the virtual knots $6.76479, \, 6.78358, \, 6.87188, \, 6.89815$ and $6.90115$. In all cases, the groups $G_K$ and $G_{K^*}$ can be distinguished by counting representations into $S_4$ or $S_5.$

\begin{table}[h]
\begin{center}
\begin{tabular}{c| r r r r r r r}
\hline
Crossing  \; & virtual & mod $2$ & mod $3$ & mod $4$ & mod $5$ & almost & classical\\
number & knots &AC&AC&AC&AC&classical & knots \\
\hline
2  & 1         & 0       & 0      & 0   &   0  & 0 & 0  \\
3  & 7         & 3       & 1      & 1   &   1  & 1 & 1\\
4  & 108     & 10     & 6      & 3   & 3    & 3 & 1\\
5  & 2448   & 104   & 21    & 17 & 11  & 11 & 2 \\
6  & 90235 & 1557 & 192  & 81 & 71  & 61  & 5\\  
\end{tabular}
\end{center}
\bigskip
\caption{The numbers of mod $p$ almost classical knots up to $6$ crossings.}
\label{Numbers-AC}
\end{table}

\begin{table}[ht] 

\begin{tabular}{cc}
\begin{minipage}{0.49\textwidth}
\begin{tabular}{|l|c|c|}
\hline
Knot & $g$ & Alexander polynomial  \\
\hline \hline
${3.6}$ &0& $t^2-t+1$\\ \hline
4.99  &1& $2t-1$ \\ \hline
4.105 &1& $2t^2-2t+1$ \\ \hline
${4.108}$  &0& $t^2-3t+1 $ \\  \hline
5.2012 &2& $1$ \\  \hline
5.2025 &2& $1 $ \\  \hline
5.2080 &1& $1$ \\  \hline
5.2133 &2& $2t-1$ \\  \hline
5.2160 &1& $t^2-t+1 $ \\  \hline
5.2331 &1& $t^3-t+1 $ \\  \hline
5.2426 &1& $(t^2-t+1)^2$ \\  \hline
5.2433 &2& $t^4-2t^3 +4t^2 -3t +1 $ \\  \hline
${5.2437}$ &0& $2t^2 - 3t + 2 $ \\  \hline
5.2439 &1& $t^3-2t^2 +3t -1$ \\  \hline
${5.2445}$ &0& $ t^4 - t^3 + t^2 - t + 1$\\  \hline
6.72507 &2& $1$ \\  \hline
6.72557 &2&$1$ \\  \hline
6.72692 &2& $1 $ \\  \hline
6.72695 &1& $1$ \\  \hline
6.72938 &2& $t^2-t+1  $ \\  \hline
6.72944 &1& $2t-1  $ \\  \hline
6.72975 &2& $1$ \\  \hline
6.73007 &2& $1$ \\  \hline
6.73053 &1& $t^2-t+1 $ \\  \hline
6.73583 &2&$1$ \\  \hline
6.75341 &1& $2t-1 $ \\  \hline
6.75348 &2& $t-2$ \\  \hline
6.76479 &2& $t^2-t+1 $ \\  \hline
6.77833 &2& $t^2-t+1 $ \\  \hline
6.77844 &2& $t^2-t+1 $ \\  \hline
6.77905 &2& $t^2-3t+1$ \\  \hline
6.77908 &1& $2t^2-2t+1 $ \\  \hline
6.77985 &2& $t^2-t+1 $ \\  \hline
6.78358 &2& $t^2-3t+1 $ \\  \hline
6.79342 &1& $t^2-3t+1$ \\  \hline
6.85091 &1& $t^2+t-1 $ \\  \hline
6.85103 &1& $t^3-t^2+2t-1 $ \\  \hline
6.85613\; &2& $t^2-2t+2 $ \\  \hline

\end{tabular}
\end{minipage}

\begin{minipage}{0.49\textwidth}
 \begin{tabular}{|l|c|c|} 
\hline
Knot &  $g$  & Alexander polynomial   \\ 
\hline \hline
6.85774 &2& $t^3 - t^2 + 1 $ \\  \hline
6.87188 &1& $(2t - 1)(t^2 - t + 1) $ \\  \hline
6.87262 &2& $(t^2-t+1)^2$ \\  \hline
6.87269 &1& $(2t-1)^2$ \\  \hline
6.87310 &1& $t^4 -t^3 +2t^2 -2t +1 $ \\  \hline
6.87319 &2& $3t^2-3t+1 $ \\  \hline
6.87369 &2& $t^3-2t^2+3t-1 $ \\  \hline
6.87548 &1& $t^3-2t^2 -t+1 $ \\  \hline
6.87846 &1& $t^3-2t^2-t+1  $ \\  \hline
6.87857 &1& $t^2-4t+2 $ \\  \hline
6.87859 &1& $3t^2-3t+1 $ \\  \hline
6.87875 &1& $t^3+t^2-2t+1 $ \\  \hline
6.89156 &1&  $2t^3-t^2-t+1 $ \\  \hline
6.89187 &0& $(t^2-t+1)^2$ \\  \hline
6.89198 &0& $(t^2-t+1)^2$ \\  \hline
6.89623 &1& $2t^2-2t+1 $ \\  \hline
6.89812 &1& $t^3 -2t+2 $ \\  \hline
6.89815 &1& $2t - 1 $ \\  \hline
6.90099 &2& $t^4 - t^2 + 1 $ \\  \hline
6.90109 &1& $(2t^2-2t+1)(t^2-t+1)$ \\  \hline
6.90115 &1& $(t^2 - 3t + 1)(t^2 - t + 1)$ \\  \hline
6.90139 &1& $(3t^2 - 3t + 1)(t^2 - t + 1)$ \\  \hline
6.90146 &2& $t^4 - 5t^3 + 9t^2 - 5t + 1$ \\  \hline
6.90147 &2& $t^4-3t^3+6t^2-5t+2 $ \\  \hline
6.90150 &1& $t^4 -5t^3+6t^2-4t+1 $ \\  \hline
6.90167 &1& $t^4-2t^3+4t^2 -4t+2 $ \\  \hline
6.90172 &0& $t^4-3t^3+5t^2-3t+1 $ \\  \hline
6.90185 &2& $3t^4-6t^3+6t^2-3t+1$ \\  \hline
6.90194 &1& $t^4-4t^3+8t^2-5t+1$ \\  \hline
6.90195 &1& $2t^4-3t^3+3t^2-2t+1 $ \\  \hline
6.90209 &0& $t^4-3t^3+3t^2-3t+1 $ \\  \hline
6.90214 &1& $3t^2-4t+2 $ \\  \hline
6.90217 &1& $t^3-4t^2+3t-1 $ \\  \hline
6.90219 &1& $2t^3 - 3t^2 + 3t - 1$ \\  \hline
6.90227 &0& $(t - 2)(2t - 1)$ \\  \hline
6.90228 &1& $4t^2 - 6t + 3$ \\  \hline
6.90232 &1& $2 t^3 - 6 t^2 + 4t - 1$ \\  \hline
6.90235 \; &2& $t^4 -3t^3+5t^2-3t+1 $ \\  \hline  
\end{tabular}
\end{minipage}
\end{tabular}
\vspace{5mm}
\caption{The virtual genus $g=g(\Si_K)$ and Alexander polynomial for almost classical knots with up to six crossings.}
\label{acks2}
\end{table}

\noindent
{\bf Concluding remarks.} In closing, we remark that one can develop similar results for long virtual knots. For instance, given a long virtual knot, there are natural definitions for the virtual knot group $\VG_K$, the reduced virtual knot group $\RG_K$, the extended group $\EG_K$, and the quandle groups $\QG_K$, and one can prove analogues to Theorems \ref{vgk-amalgamation} and \ref{Duality} for long virtual knots. 

The corresponding theory of Alexander invariants is actually much simpler because the first elementary ideals associated to each of these groups are in general principal for long virtual knots. Particularly nice is the case of almost classical long knots as many of the invariants of classical knots extend in the most natural way. This follows by viewing virtual knots as knots in thickened surfaces and interpreting long virtual knots as knots in thickened surfaces with a choice of basepoint. Drilling a small vertical hole through the thickened surface $\Sigma \times I$ at the basepoint results in 3-manifold $M$ with $\partial M$ connected. In particular, $M$ is a quasi-cylinder with $H_2(M) =0,$ and this is precisely the condition required to extend classical knot invariants to quasi-cylinders in the paper \cite{CT} by Cimasoni and Turaev. For instance, by appealing to the results in \cite{CT}, one can define a Conway normalized Alexander polynomial, knot signatures, etc. We plan to address these questions in future work, where we hope to develop the corresponding results on virtual knot groups and almost classicality for long virtual knots.

\subsection*{Acknowledgements} We extend our thanks to Dror Bar-Natan, Ester Dalvit, Emily Dies, Jim Hoste and Vassily Manturov for their valuable input.  H. Boden and A. Nicas were supported by grants from the Natural Sciences and Engineering Research Council of Canada, and
R. Gaudreau was supported by a Postgraduate Scholarship from  the Natural Sciences and Engineering Research Council of Canada.

\newpage


\begin{figure}[h]
\tiny{
${3.6=3_1}$ \hspace{-0.8cm} \includegraphics[scale=0.65]{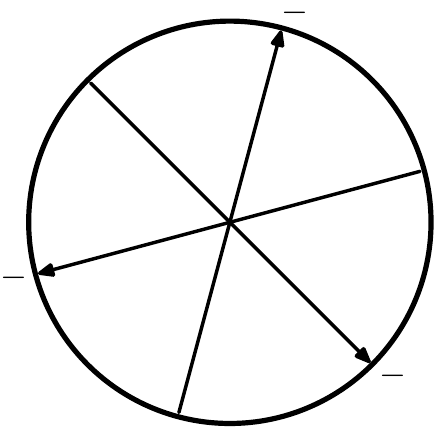} \hspace{0.8cm}
4.99 \hspace{-0.6cm}\includegraphics[scale=0.65]{GD-4-99.pdf} \hspace{0.8cm}  
4.105 \hspace{-0.6cm}\includegraphics[scale=0.65]{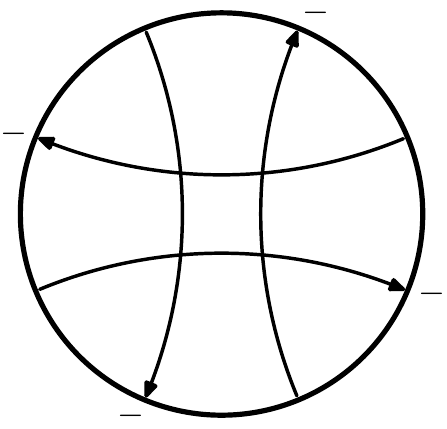}  

\smallskip
${4.108=4_1}$ \hspace{-0.8cm} \includegraphics[scale=0.65]{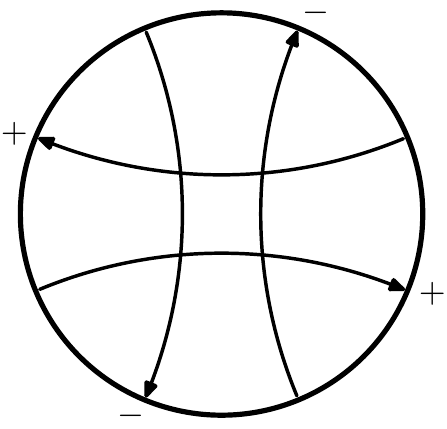} \hspace{0.6cm}
5.2012 \hspace{-0.6cm}\includegraphics[scale=0.65]{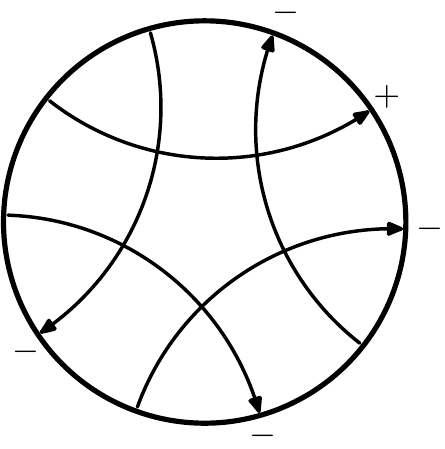}  \hspace{0.6cm}
5.2025 \hspace{-0.6cm}\includegraphics[scale=0.65]{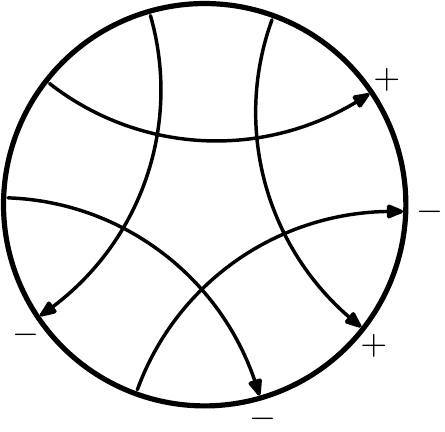}  

\smallskip
5.2080 \hspace{-0.6cm}\includegraphics[scale=0.65]{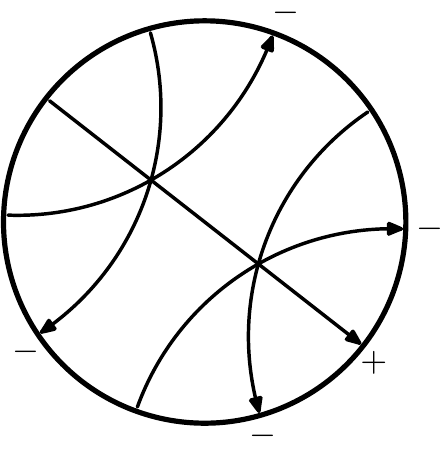} \hspace{0.8cm}
5.2133 \hspace{-0.6cm}\includegraphics[scale=0.65]{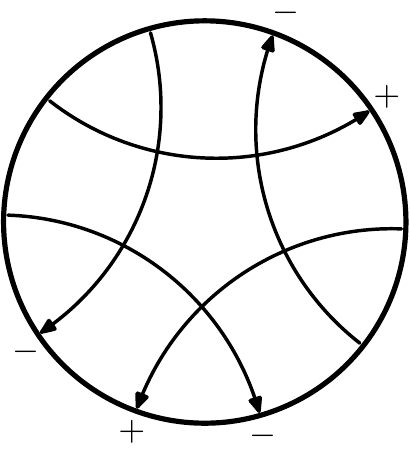} \hspace{0.8cm}
5.2160 \hspace{-0.6cm}\includegraphics[scale=0.65]{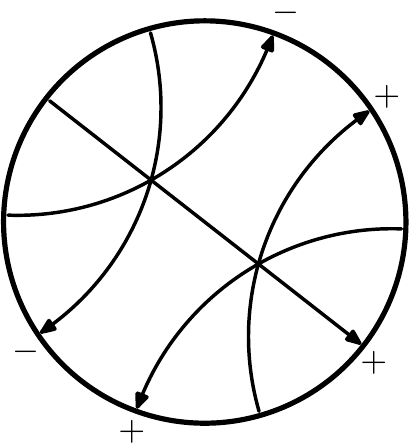}  

\smallskip
5.2331 \hspace{-0.6cm}\includegraphics[scale=0.65]{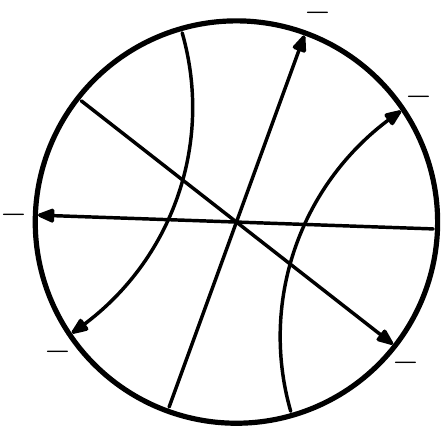} \hspace{0.8cm}
5.2426 \hspace{-0.6cm}\includegraphics[scale=0.65]{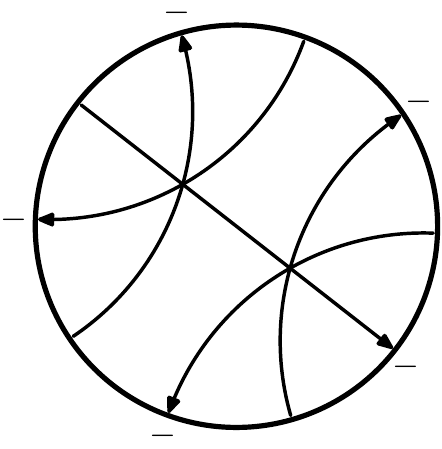} \hspace{0.8cm}
5.2433 \hspace{-0.6cm}\includegraphics[scale=0.65]{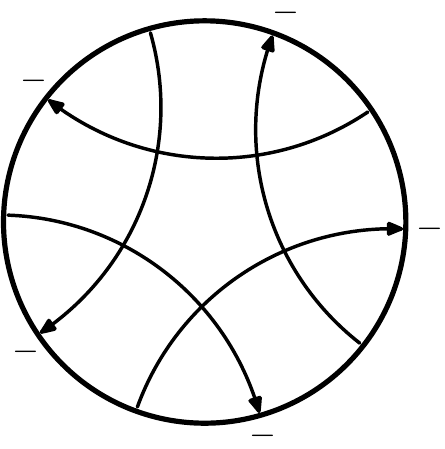}  

\smallskip
$5.2437=5_2$ \hspace{-1.0cm}\includegraphics[scale=0.65]{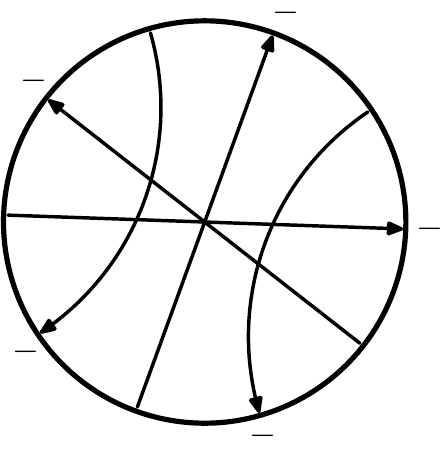} \hspace{0.8cm}
5.2439 \hspace{-0.6cm}\includegraphics[scale=0.65]{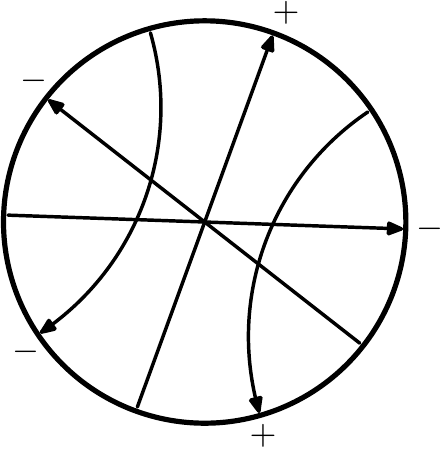} \hspace{0.1cm}
$5.2445=5_1$ \hspace{-1.0cm}\includegraphics[scale=0.65]{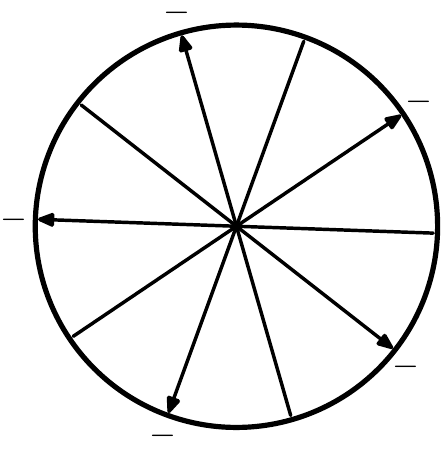} 

\smallskip 
6.72507 \hspace{-0.7cm}\includegraphics[scale=0.65]{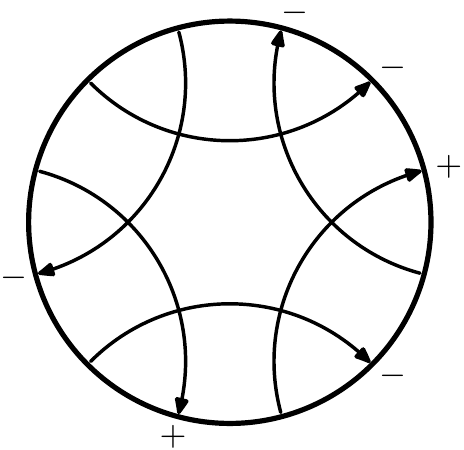} \hspace{0.7cm} 
6.72557 \hspace{-0.7cm}\includegraphics[scale=0.65]{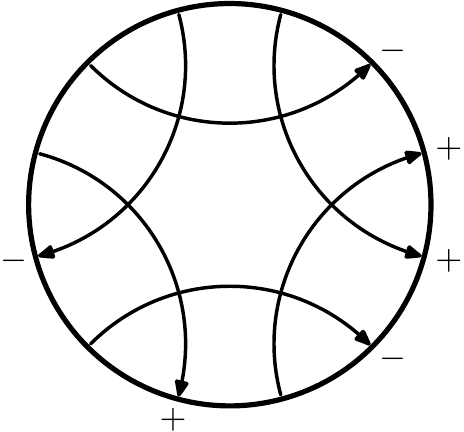} \hspace{0.7cm}
6.72692 \hspace{-0.7cm}\includegraphics[scale=0.65]{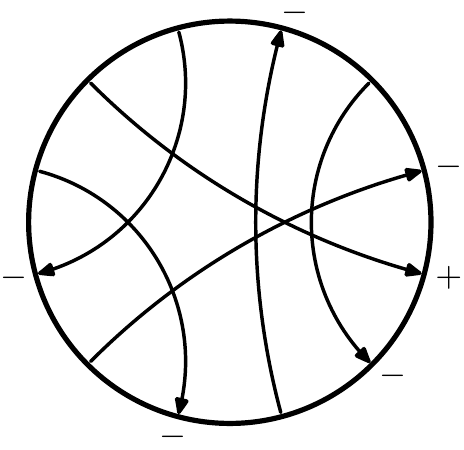} 

\smallskip
6.72695 \hspace{-0.7cm}\includegraphics[scale=0.65]{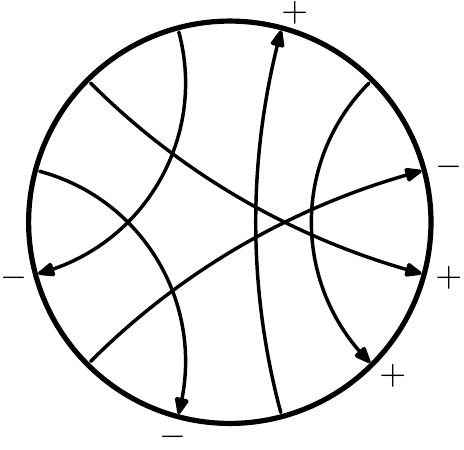} \hspace{0.7cm} 
6.72938 \hspace{-0.7cm}\includegraphics[scale=0.65]{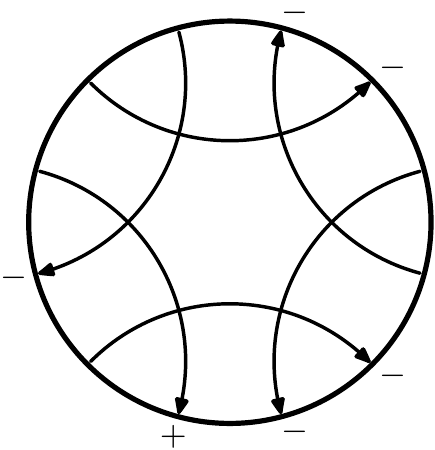} \hspace{0.7cm} 
6.72944 \hspace{-0.7cm}\includegraphics[scale=0.65]{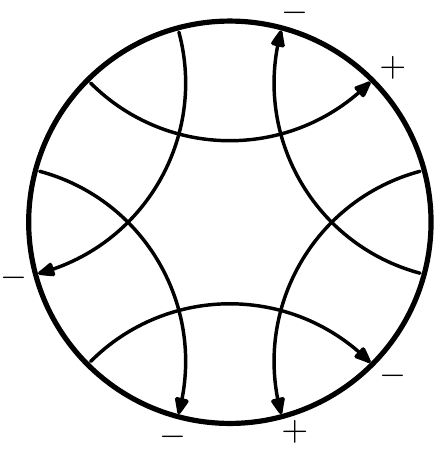} }
\end{figure} 

\begin{figure}[ht]
\tiny{
6.72975 \hspace{-0.7cm}\includegraphics[scale=0.65]{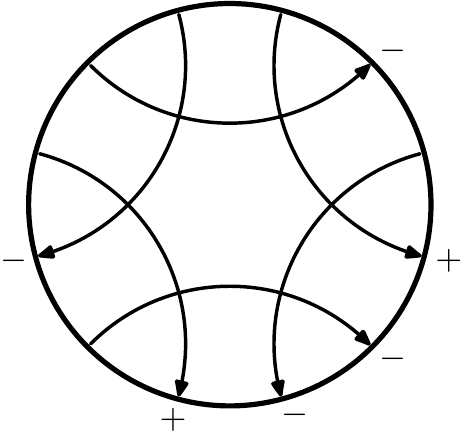} \hspace{0.7cm}
6.73007 \hspace{-0.7cm}\includegraphics[scale=0.65]{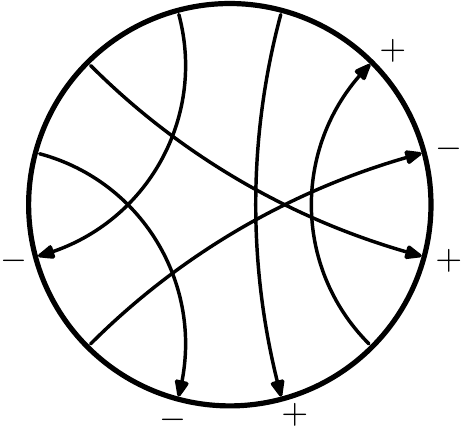} \hspace{0.7cm}
6.73053 \hspace{-0.7cm}\includegraphics[scale=0.65]{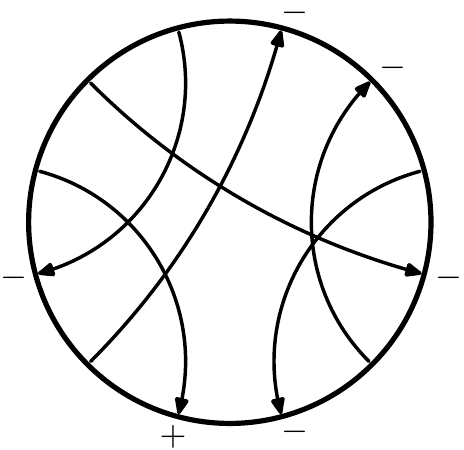} 

\smallskip
6.73583 \hspace{-0.7cm}\includegraphics[scale=0.65]{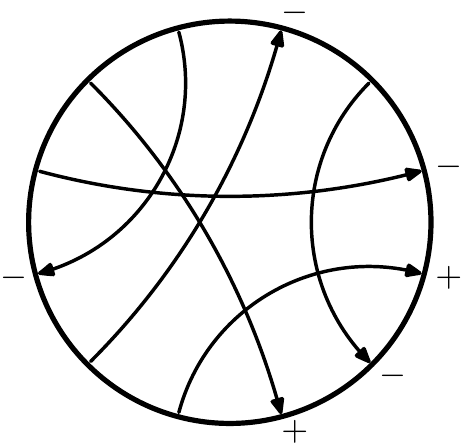} \hspace{0.7cm}
6.75341 \hspace{-0.7cm}\includegraphics[scale=0.65]{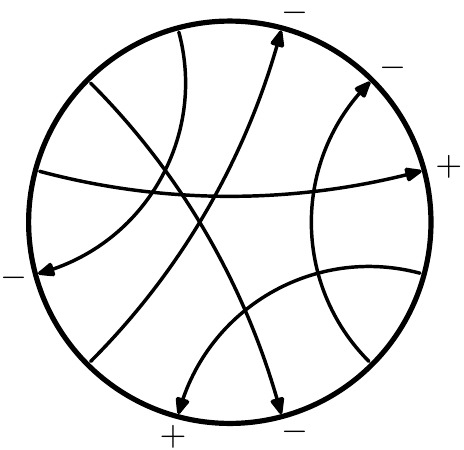} \hspace{0.7cm}
6.75348 \hspace{-0.7cm}\includegraphics[scale=0.65]{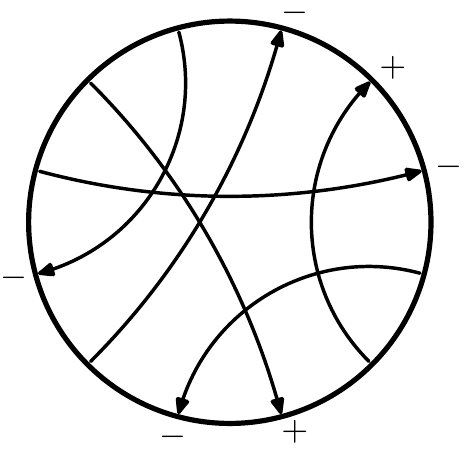}

\smallskip
6.76479 \hspace{-0.7cm}\includegraphics[scale=0.65]{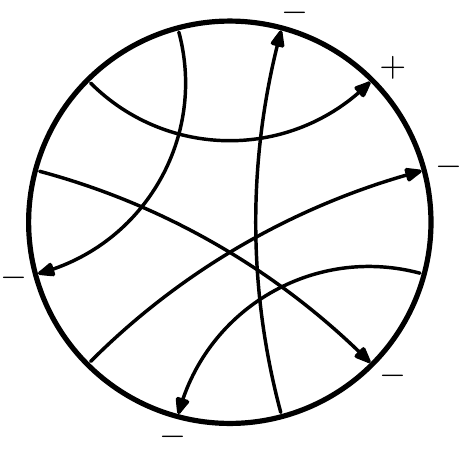} \hspace{0.7cm}
6.77833 \hspace{-0.7cm}\includegraphics[scale=0.65]{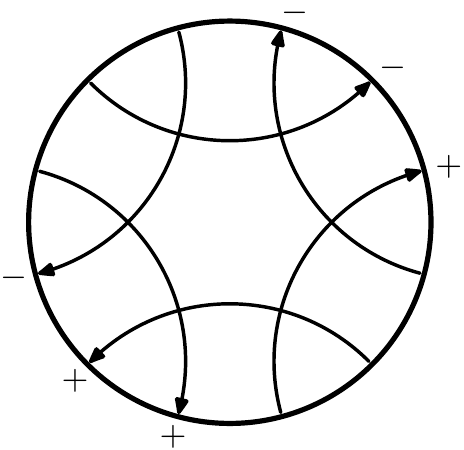} \hspace{0.7cm}
6.77844 \hspace{-0.7cm}\includegraphics[scale=0.65]{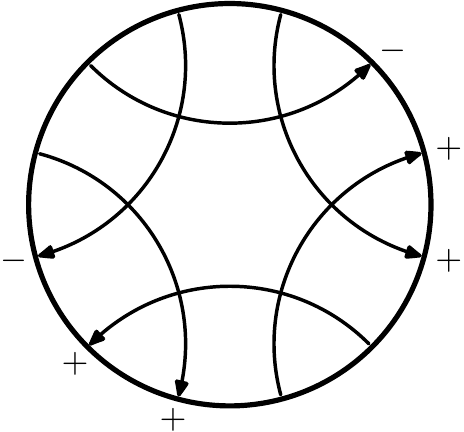} 

\smallskip 
6.77905 \hspace{-0.7cm}\includegraphics[scale=0.65]{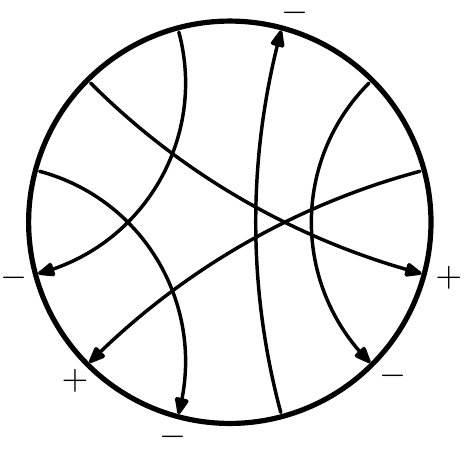} \hspace{0.7cm}
6.77908 \hspace{-0.7cm}\includegraphics[scale=0.65]{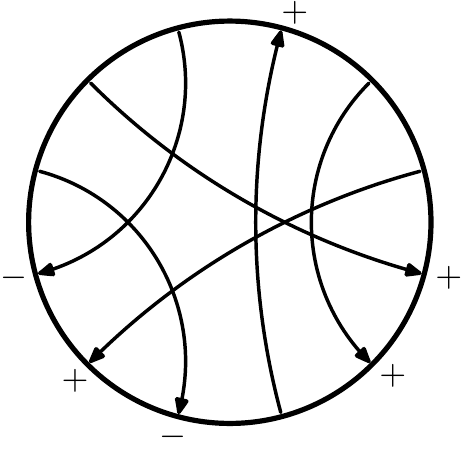} \hspace{0.7cm}
6.77985 \hspace{-0.7cm}\includegraphics[scale=0.65]{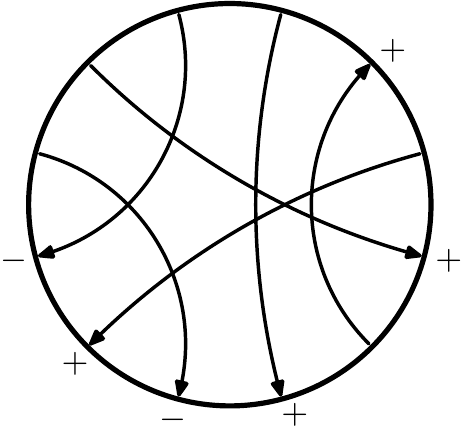} 

\smallskip 
6.78358 \hspace{-0.7cm}\includegraphics[scale=0.65]{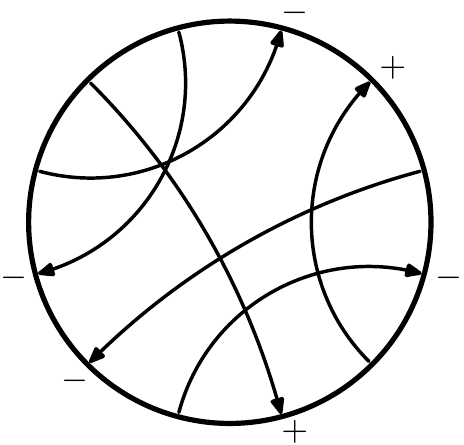} \hspace{0.7cm}
6.79342 \hspace{-0.5cm}\includegraphics[scale=0.65]{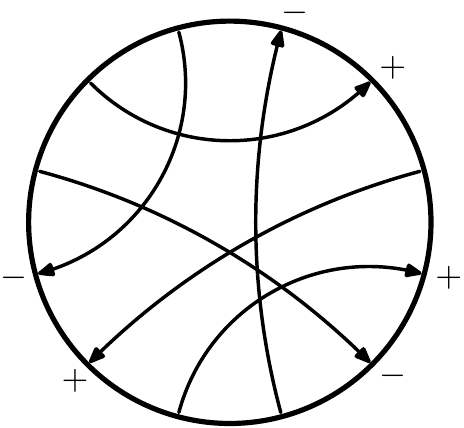} \hspace{0.7cm}
6.85091 \hspace{-0.7cm}\includegraphics[scale=0.65]{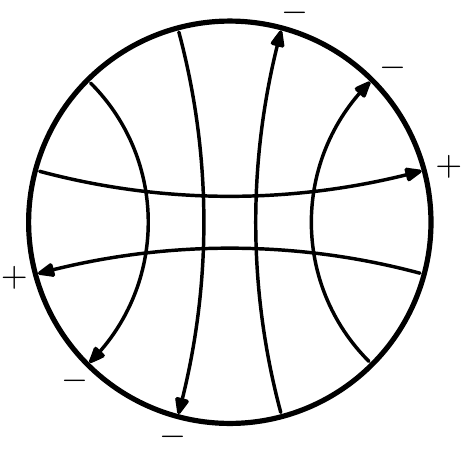}  
 
\smallskip
6.85103 \hspace{-0.7cm}\includegraphics[scale=0.65]{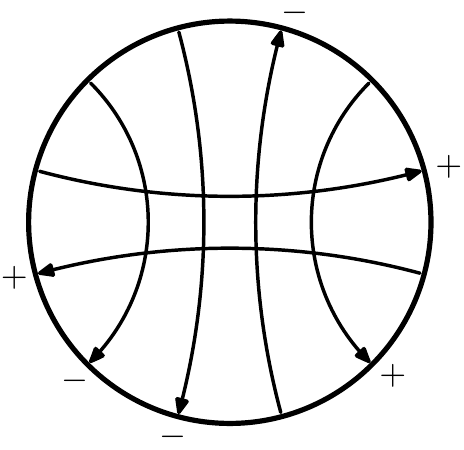} \hspace{0.7cm}
6.85613 \hspace{-0.5cm}\includegraphics[scale=0.65]{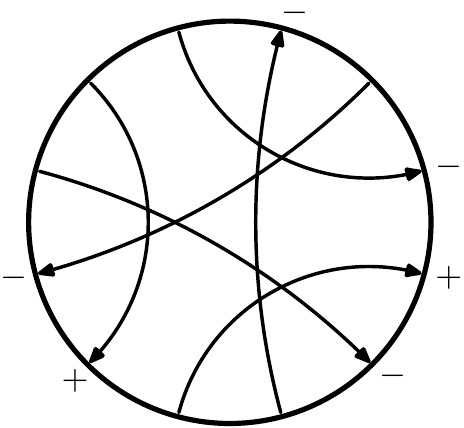} \hspace{0.7cm}
6.85774 \hspace{-0.7cm}\includegraphics[scale=0.65]{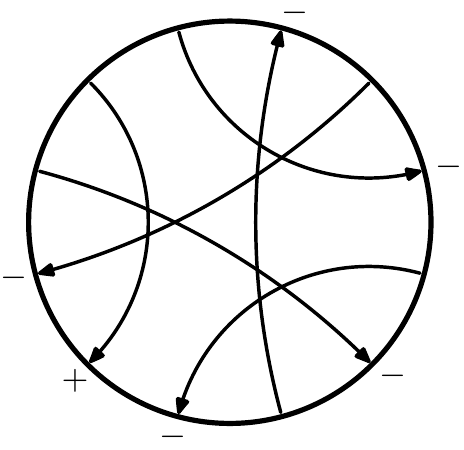} 

\smallskip
6.87188 \hspace{-0.7cm}\includegraphics[scale=0.65]{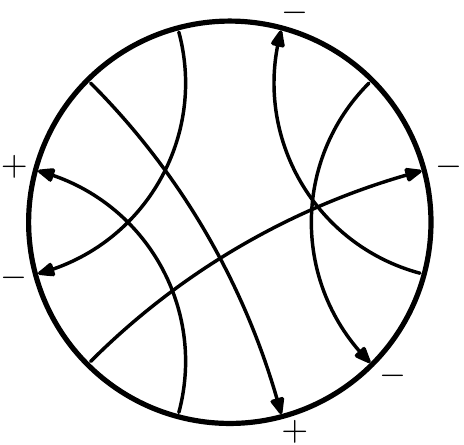} \hspace{0.7cm}
6.87262 \hspace{-0.7cm}\includegraphics[scale=0.65]{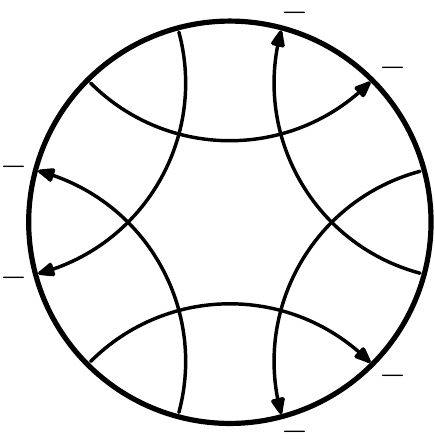} \hspace{0.7cm}
6.87269 \hspace{-0.7cm}\includegraphics[scale=0.65]{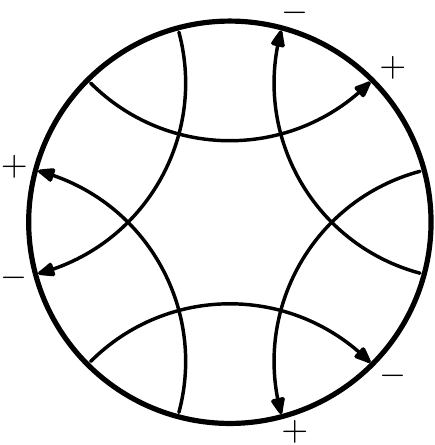} }
\end{figure} 

\begin{figure}[h]
\tiny{
6.87310 \hspace{-0.7cm}\includegraphics[scale=0.65]{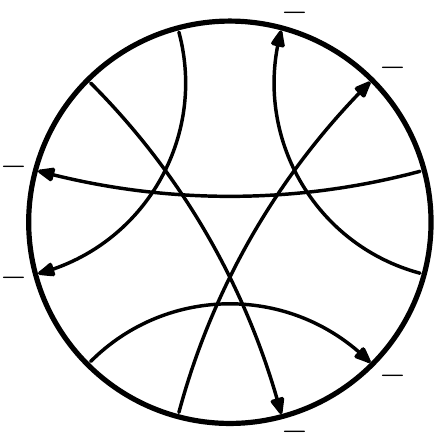} \hspace{0.7cm}
6.87319 \hspace{-0.7cm}\includegraphics[scale=0.65]{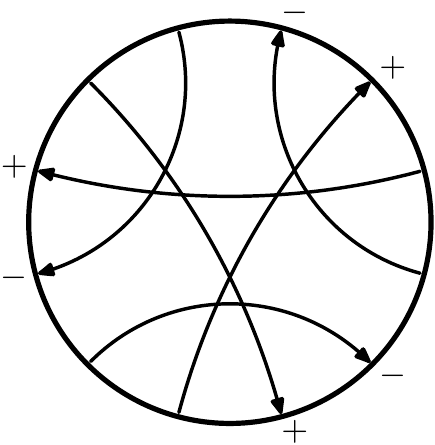} \hspace{0.7cm}
6.87369 \hspace{-0.7cm}\includegraphics[scale=0.65]{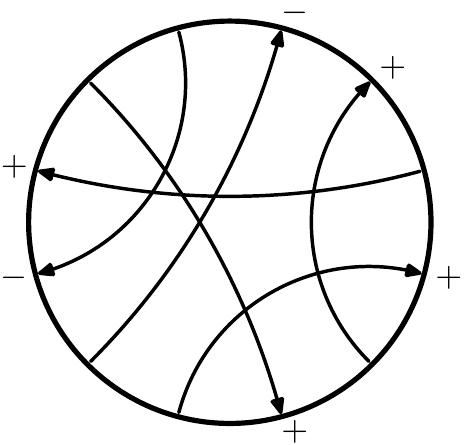}  

\smallskip
6.87548 \hspace{-0.7cm}\includegraphics[scale=0.65]{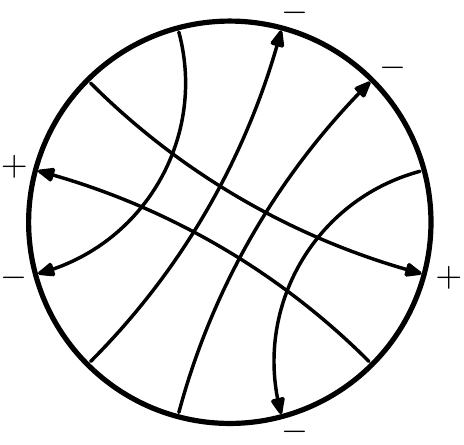} \hspace{0.7cm}
6.87846 \hspace{-0.7cm}\includegraphics[scale=0.65]{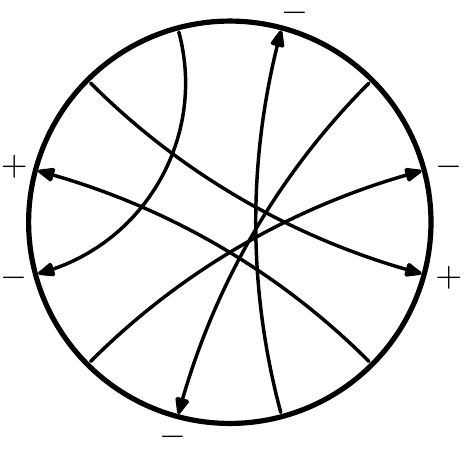} \hspace{0.7cm}
6.87857 \hspace{-0.7cm}\includegraphics[scale=0.65]{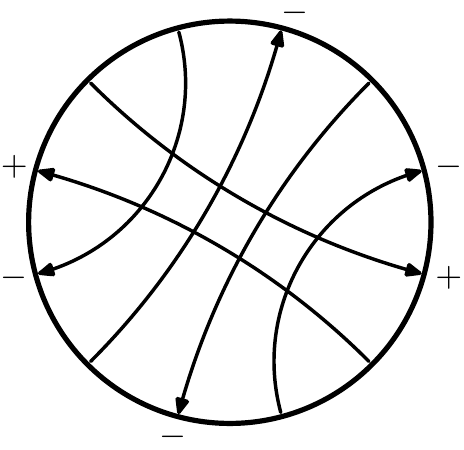}

\smallskip
6.87859 \hspace{-0.7cm}\includegraphics[scale=0.65]{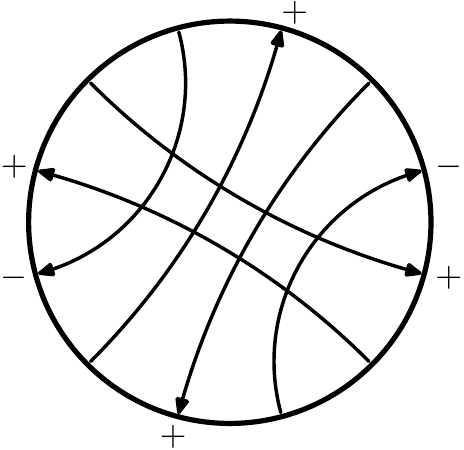} \hspace{0.7cm}
6.87875 \hspace{-0.7cm}\includegraphics[scale=0.65]{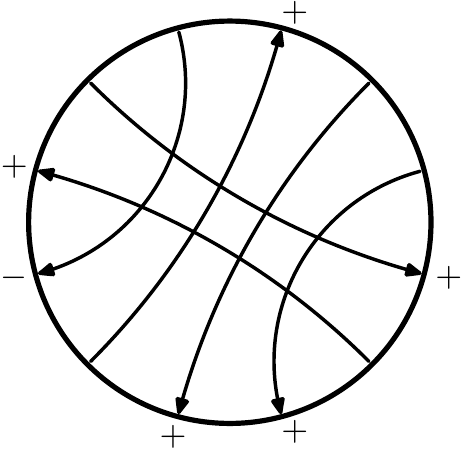} \hspace{0.7cm}
6.89156 \hspace{-0.9cm}\includegraphics[scale=0.65]{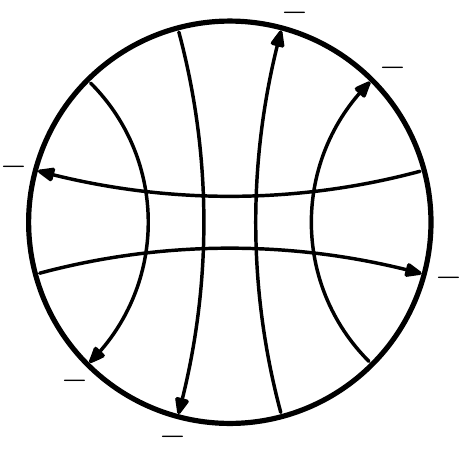}  

\smallskip
$\stackrel{\hbox{$6.89187\quad $}}{\hbox{$=3_1 \# 3_1$}}$
\hspace{-0.7cm}\includegraphics[scale=0.65]{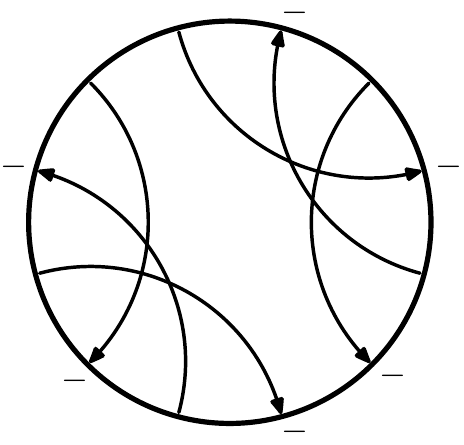}  \hspace{0.1cm}
$\stackrel{\hbox{$6.89198\quad $}}{\hbox{$=3_1 \# 3_1^*$}}$
\hspace{-0.7cm}\includegraphics[scale=0.65]{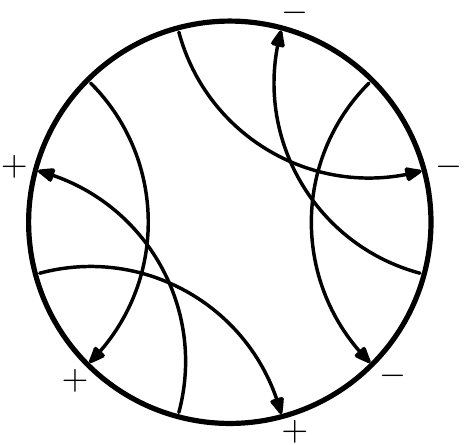} \hspace{0.5cm}
6.89623 \hspace{-0.7cm}\includegraphics[scale=0.65]{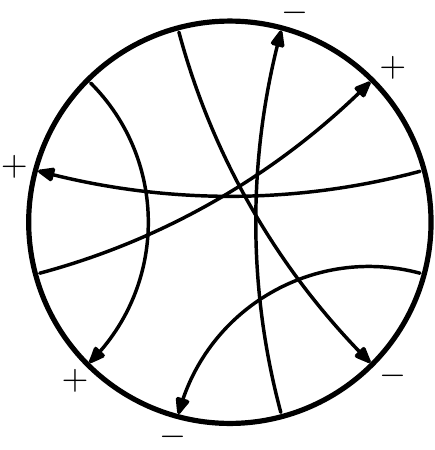}  

\smallskip
6.89812 \hspace{-0.7cm}\includegraphics[scale=0.65]{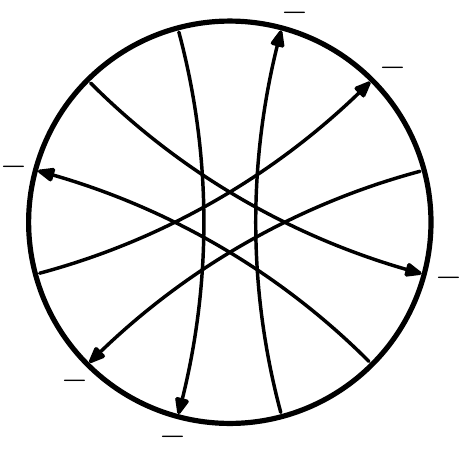}  \hspace{0.7cm}
6.89815 \hspace{-0.7cm}\includegraphics[scale=0.65]{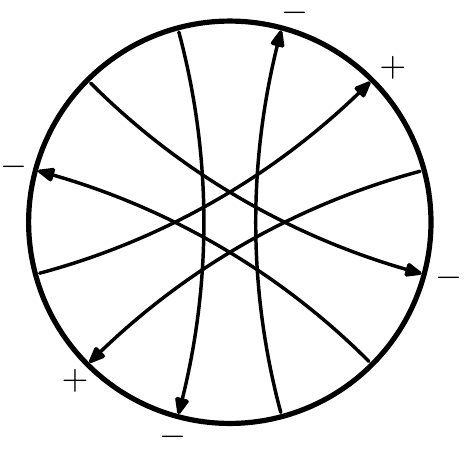} \hspace{0.7cm} 
6.90099 \hspace{-0.7cm}\includegraphics[scale=0.65]{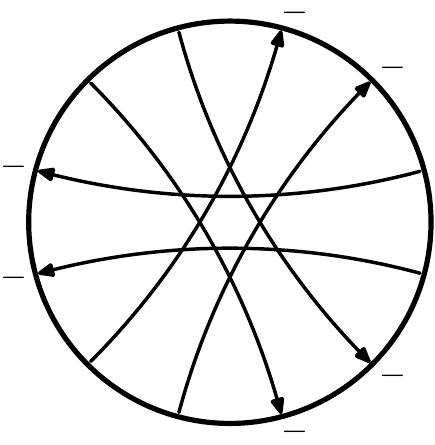}

\smallskip
6.90109 \hspace{-0.7cm}\includegraphics[scale=0.65]{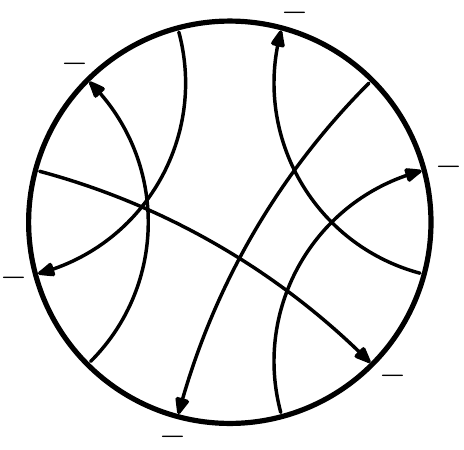} \hspace{0.7cm}
6.90115 \hspace{-0.7cm}\includegraphics[scale=0.65]{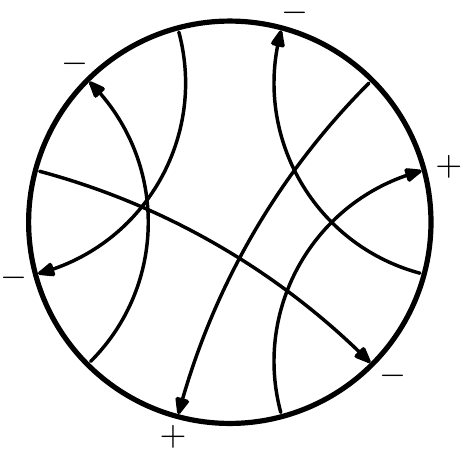} \hspace{0.7cm}
6.90139 \hspace{-0.7cm}\includegraphics[scale=0.65]{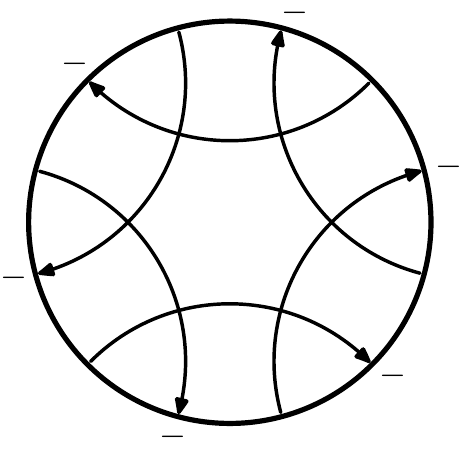} 

\smallskip
6.90146 \hspace{-0.7cm}\includegraphics[scale=0.65]{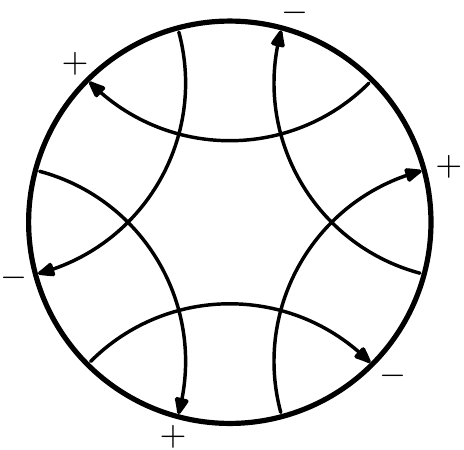} \hspace{0.7cm}
6.90147 \hspace{-0.7cm}\includegraphics[scale=0.65]{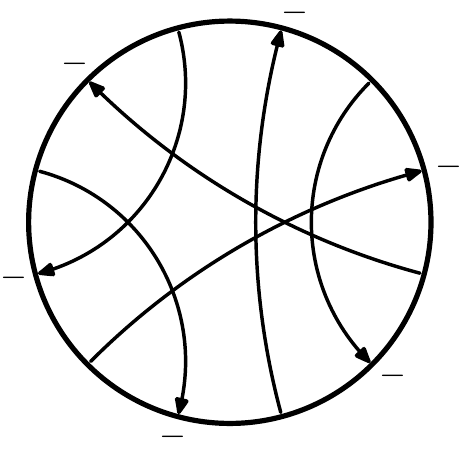} \hspace{0.7cm}
6.90150 \hspace{-0.7cm}\includegraphics[scale=0.65]{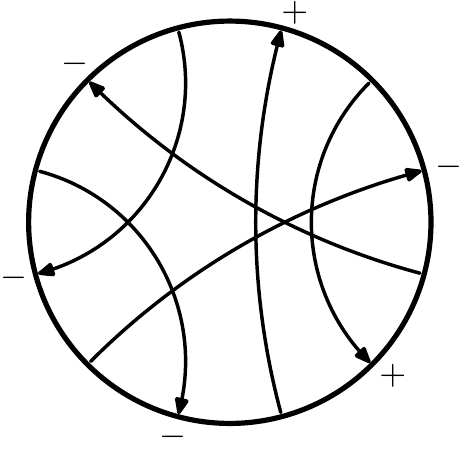}   
 }
\end{figure}

\begin{figure}[h]
\tiny{
6.90167 \hspace{-0.7cm}\includegraphics[scale=0.65]{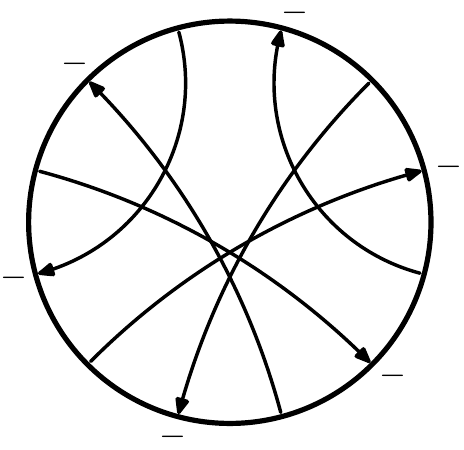}  \hspace{0.5cm}
$6.90172=6_3$ \hspace{-0.9cm}\includegraphics[scale=0.65]{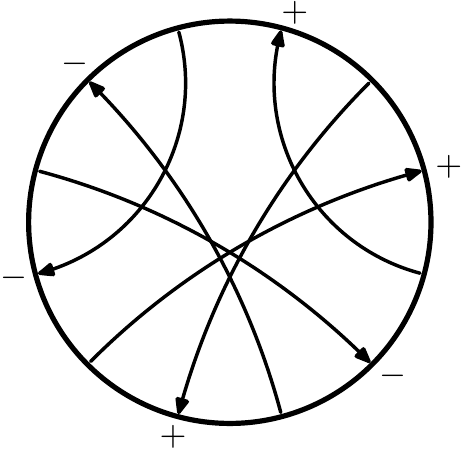}  \hspace{0.5cm}
6.90185 \hspace{-0.7cm}\includegraphics[scale=0.65]{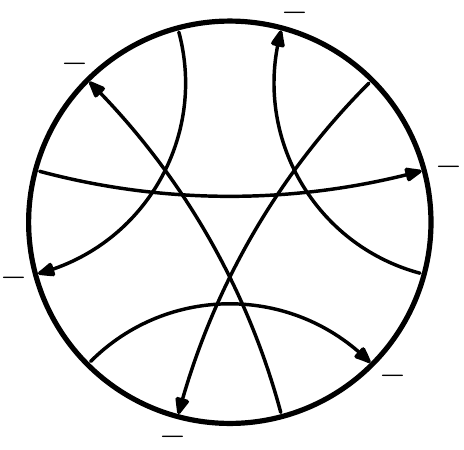}  

\smallskip
6.90194 \hspace{-0.7cm}\includegraphics[scale=0.65]{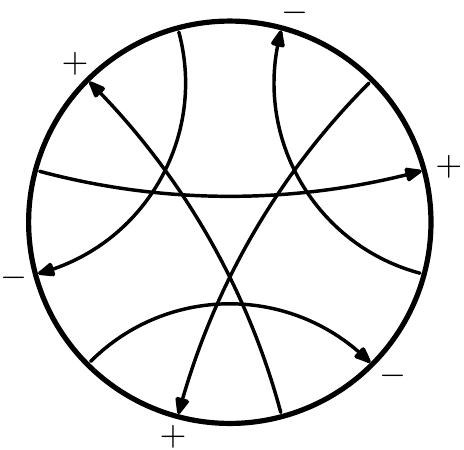} \hspace{0.5cm}
6.90195 \hspace{-0.7cm}\includegraphics[scale=0.65]{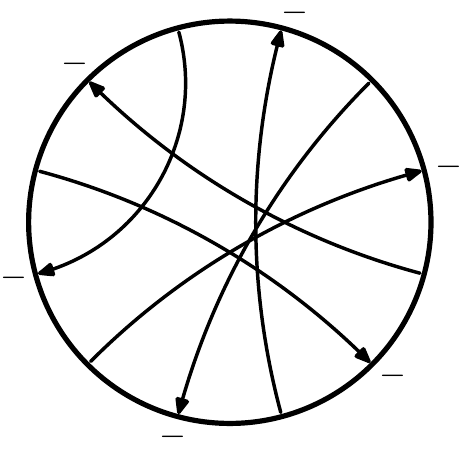} \hspace{0.5cm}
$6.90209=6_2$ \hspace{-1cm}\includegraphics[scale=0.65]{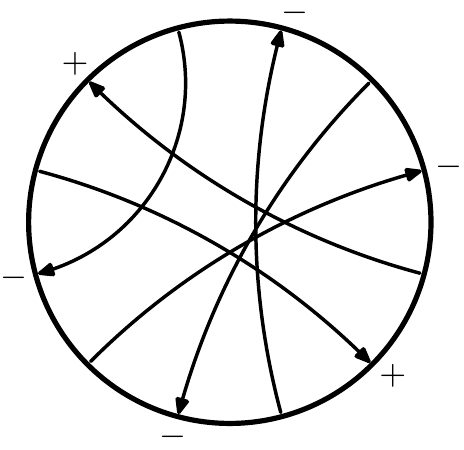} 

\smallskip
6.90214 \hspace{-0.7cm}\includegraphics[scale=0.65]{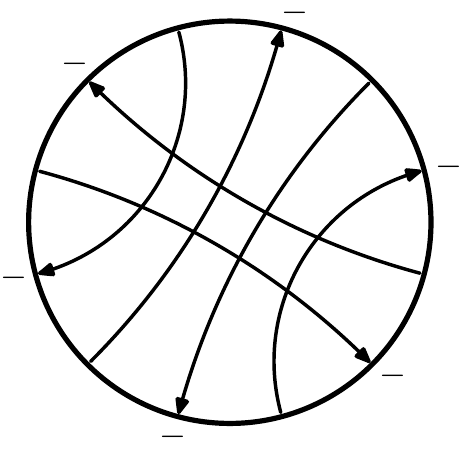} \hspace{0.5cm}
6.90217 \hspace{-0.7cm}\includegraphics[scale=0.65]{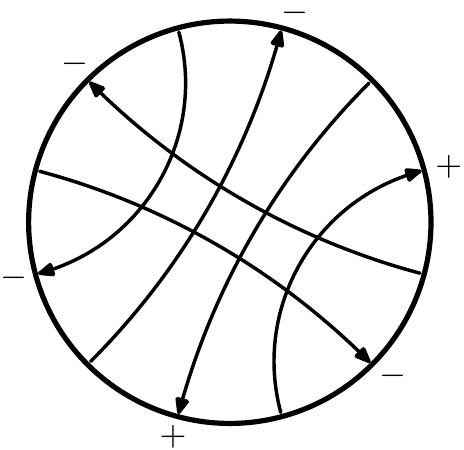} \hspace{0.5cm}
6.90219 \hspace{-0.7cm}\includegraphics[scale=0.65]{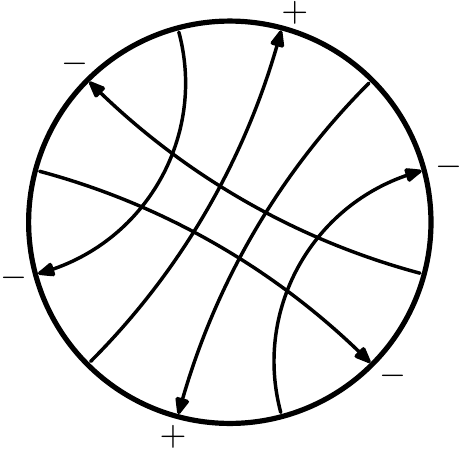} 

\smallskip
$6.90227=6_1$ \hspace{-1cm}\includegraphics[scale=0.65]{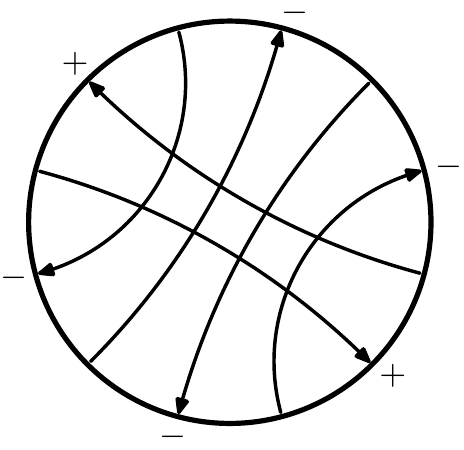} \hspace{0.5cm}
6.90228 \hspace{-0.7cm}\includegraphics[scale=0.65]{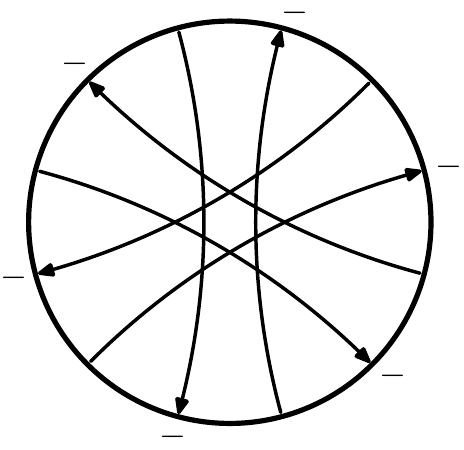} 

\smallskip
6.90232 \hspace{-0.7cm}\includegraphics[scale=0.65]{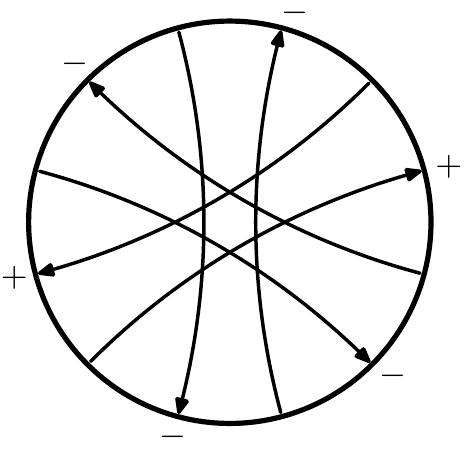} \hspace{0.5cm}
6.90235 \hspace{-0.7cm}\includegraphics[scale=0.65]{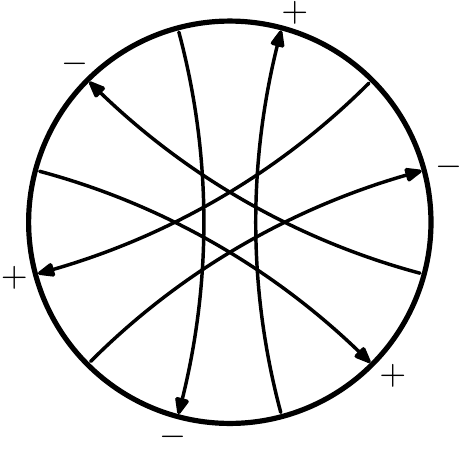}
}
\vspace{5mm}
\caption{ 76 Gauss diagrams of almost classical knots with up to six crossings, including the nine classical knots $3_1, 4_1, 5_1, 5_2, 6_1, 6_2, 6_3, 3_1 \# 3_1,$ and $3_1 \# 3_1^*$ as indicated.}
\label{AC-knots}
\end{figure}

\end{document}